\documentclass[a4paper,12pt,reqno]{amsart}

\usepackage{color}
\usepackage{amsmath}
\usepackage{amssymb}
\usepackage{amsfonts}
\usepackage{graphicx}
\usepackage{mathtools}
\usepackage[colorlinks]{hyperref}
\renewcommand\eqref[1]{(\ref{#1})} 
\usepackage{booktabs}
\usepackage{lscape}
\usepackage{braket}
\usepackage{mathrsfs}
\usepackage{comment}
\usepackage{enumitem}
\usepackage{stackrel}
\usepackage{mathtools,bm}
\usepackage{tikz}
\usepackage{tikz-cd}
\usepackage{circuitikz}
\usepackage{graphicx}
\usepackage{dsfont}
\usepackage{float}
\usepackage[thinlines]{easytable}
\usepackage{array}
\usepackage{booktabs}

\graphicspath{ {images/} }
\setlength{\textwidth}{15.2cm}
\setlength{\textheight}{22.7cm}
\setlength{\topmargin}{0mm}
\setlength{\oddsidemargin}{3mm}
\setlength{\evensidemargin}{3mm}
\setlength{\footskip}{1cm}

\newcommand{\bbC}{{\Bbb C}}

\newcommand{\bbN}{{\Bbb N}}
\newcommand{\bbR}{{\Bbb R}}

\newcommand{\bbZ}{{\Bbb Z}}

\newcommand{\bbS}{{\Bbb S}}

\newcommand{\bbT}{{\Bbb T}}

\newcommand{\skernel}{\mathscr{A}}

\allowdisplaybreaks
\title[$L^p$-bounds in Safarov pseudo-differential calculus]{$L^p$-bounds in Safarov pseudo-differential calculus on manifolds with bounded geometry}

\author[S. G\'omez Cobos]{Santiago G\'omez Cobos}
\address{
	Santiago G\'omez Cobos:
	\endgraf
	Department of Mathematics: Analysis, Logic and Discrete Mathematics
	\endgraf
	Ghent University, Krijgslaan 281, Building S8, B 9000 Ghent
	\endgraf
	Belgium
	\endgraf
	{\it E-mail address} {\rm davidsantiago.gomezcobos@ugent.be}}

\author[M. Ruzhansky]{Michael Ruzhansky}
\address{
	Michael Ruzhansky:
	\endgraf
	Department of Mathematics: Analysis, Logic and Discrete Mathematics
	\endgraf
	Ghent University, Krijgslaan 281, Building S8, B 9000 Ghent
	\endgraf
	Belgium
	\endgraf
	and
	\endgraf
    School of Mathematical Sciences
    \endgraf
    Queen Mary University of London
    \endgraf
    United Kingdom
    \endgraf
	{\it E-mail address} {\rm michael.ruzhansky@ugent.be}}


\subjclass[2010]{58J40, 53B20, 35S05.}
\keywords{Complete Riemannian manifolds of bounded geometry, pseudo-differential operators, global symbols, $L^p$-bounds.}



\newtheoremstyle{theorem}
{10pt}          
{10pt}  
{\sl}  
{\parindent}     
{\bf}  
{. }    
{ }    
{}     
\theoremstyle{theorem}

\numberwithin{equation}{section}
\theoremstyle{plain} 
\newtheorem{thm}{Theorem}[section]
\newtheorem{prop}[thm]{Proposition}
\newtheorem{cor}[thm]{Corollary}
\newtheorem{lem}[thm]{Lemma}

\theoremstyle{definition}
\newtheorem{defn}[thm]{Definition}
\newtheorem{rem}[thm]{Remark}
\newtheorem{ex}[thm]{Example}
\newtheorem{as}[thm]{Assumption}

\newtheoremstyle{defi}
{10pt}          
{10pt}  
{\rm}  
{\parindent}     
{\bf}  
{. }    
{ }    
{}     
\theoremstyle{defi}



\begin{document}
 	\begin{abstract}
Given a smooth complete Riemannian manifold with bounded geometry $(M,g)$ and a linear connection $\nabla$ on it (not necessarily a metric one), we prove the $L^p$-boundedness of operators belonging to the global pseudo-differential classes $\Psi_{\rho, \delta}^m\left(\Omega^\kappa, \nabla, \tau\right)$ constructed by Safarov. Our result recovers classical Fefferman's theorem, and extends it to the following two situations: $\rho>1/3$ and $\nabla$ symmetric; and $\nabla$ flat with any values of $\rho$ and $\delta$. Moreover, as a consequence of our main result, we obtain boundedness on Sobolev and Besov spaces and some $L^p-L^q$ boundedness. Different examples and applications are presented. 
\end{abstract}
\dedicatory{Dedicated to the memory of Yuri Safarov (1958–2015)}
	\maketitle
	\tableofcontents

\section{Introduction}
Studying continuity of linear operators defined on Banach spaces is an absolute classic problem in mathematics. Such abstract problems have been useful when applied in physics and engineering, especially due to their close relationship with the theory of PDE's. For instance, studying estimation norms of operators on $L^p$ spaces, Sobolev spaces, Besov spaces, etc., becomes extremely handy when it comes to obtaining existence, regularity and approximation of solutions to different types of linear and nonlinear equations (see e.g \cite{bao, hor, tao, Taylor2, Taylor3}). In particular,  mathematicians have been actively investigating the cases of Fourier and spectral multipliers, and more generally pseudo-differential operators, since those operators provide a fundamental, accessible and diverse family of partial differential operators. Let us be more precise on what type of operators we are discussing. Usually those operators are entirely defined by means of a function $m$, which could have one or two variables, this means that there is a way (quantization procedure) to produce operators $P:=\operatorname{Op}(m)$ and we are interested on establishing conditions in $m$ such that we can guarantee the boundedness of $P$ on some given Banach spaces $X$ and $Y$ ($X$ and $Y$ could be the same space), i.e. to prove inequalities like
\begin{equation}\label{Objec}
    \|\operatorname{Op}(m) f\|_X\leq C \|f\|_Y.
\end{equation}
One important feature of this type of problems is the nature of the underlying space where the operators are defined, usually each of them has their own difficulties and particular tools to overcome them; for instance the presence or absence of a Fourier transform. Some of the classical spaces of interest are: Euclidean space $\bbR^n$, Lie groups $G$, manifolds $M$, etc. In order to get a feeling on what type of results we are looking for, let us state one of the most classical results in this topic, the H\"ormander-Mikhlin Theorem \cite{hor1, mikh}:
\begin{thm}
Let $m : \bbR^n \to \bbC$ satisfy the following condition: 
\begin{equation}\label{HorM}
    \left|\frac{\partial^\alpha}{\partial\xi^\alpha}m(\xi)\right|\leq A_\alpha |\xi|^{-|\alpha|}
\end{equation}
for all nonzero $\xi\in\bbR^n$ and for all $0\leq |\alpha|\leq \lfloor\frac{n}{2}\rfloor+1$. Then the multiplier 
\[
m(D)f(x)= \frac{1}{(2\pi)^n}\int_{\bbR^n} e^{ix\cdot \xi}\, m(\xi) \widehat{f}(\xi)\, d\xi
\]
is bounded from $L^p(\bbR^n)$ to $L^p(\bbR^n)$ for $1 < p < \infty$.
\end{thm}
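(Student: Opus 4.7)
The strategy is to realize $m(D)$ as a Calderón–Zygmund operator and then combine the resulting weak-$(1,1)$ estimate with interpolation and duality. The $L^2$-bound is immediate: the $\alpha=0$ case of \eqref{HorM} gives $\|m\|_{L^\infty}\le A_0$, and Plancherel yields $\|m(D)f\|_{L^2}\le A_0\|f\|_{L^2}$. The substantive task is to prove Hörmander's regularity condition
\begin{equation*}
\sup_{y\neq 0}\int_{|x|>2|y|}|K(x-y)-K(x)|\,dx<\infty,
\end{equation*}
for the convolution kernel $K$ of $m(D)$, viewed as a tempered distribution which is smooth away from the origin.

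To control $K$, I would invoke a Littlewood–Paley type decomposition. Fix $\psi\in C_c^\infty(\bbR^n)$ supported in the annulus $\{1/2\le|\xi|\le 2\}$ and satisfying $\sum_{j\in\bbZ}\psi(2^{-j}\xi)=1$ on $\bbR^n\setminus\{0\}$, and set $m_j(\xi):=m(\xi)\psi(2^{-j}\xi)$, letting $K_j$ denote the inverse Fourier transform of $m_j$, so that $K=\sum_j K_j$. The homogeneous character of \eqref{HorM} yields the scale-invariant bounds $|\partial^\alpha m_j(\xi)|\le C_\alpha 2^{-j|\alpha|}$, together with $|\supp m_j|\lesssim 2^{jn}$. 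Setting $N:=\lfloor n/2\rfloor+1$, Plancherel then provides the crucial dyadic estimates
\begin{equation*}
\|x^\beta K_j\|_{L^2}=\|\partial^\beta m_j\|_{L^2}\le C_\beta\,2^{j(n/2-|\beta|)},\qquad |\beta|\le N.
\end{equation*}

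A Cauchy–Schwarz split at the natural scale $|x|\sim 2^{-j}$, using $|\beta|=0$ for small $|x|$ and $|\beta|=N$ for large $|x|$, produces the uniform-in-$j$ kernel bounds $\int|K_j(x)|\,dx\le C$ and $\int|x|\,|\nabla K_j(x)|\,dx\le C$; this is precisely where the hypothesis $N>n/2$ is indispensable, so that the weight $(1+2^j|x|)^{-N}$ is square-integrable with room to spare. Summing over $j$ with the standard near/far splitting — according to whether $2^j|y|$ is small or large — delivers Hörmander's integral condition on $K$.

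With $L^2$-boundedness and the kernel regularity in hand, the Calderón–Zygmund theorem furnishes a weak-$(1,1)$ bound. Marcinkiewicz interpolation then upgrades this to a strong $(p,p)$-bound for $1<p\le 2$, and duality applied to the adjoint $m(D)^*=\overline{m}(D)$, which satisfies the identical hypothesis, extends the conclusion to $2\le p<\infty$. The main technical hurdle is the dyadic summation in the regularity step: the threshold $\lfloor n/2\rfloor+1$ is essentially tight for the Cauchy–Schwarz argument, and any attempt to use strictly fewer derivatives collapses the estimate for the weight.
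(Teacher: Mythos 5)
First, note that the paper does not actually prove this theorem: it is quoted in the introduction as classical background, with citations to H\"ormander and Mikhlin, so there is no ``paper's own proof'' to compare against. Your overall strategy — the trivial $L^2$ bound, a Littlewood–Paley decomposition $K=\sum_j K_j$, dyadic kernel estimates via Plancherel and Cauchy–Schwarz, Calder\'on–Zygmund weak-$(1,1)$, then Marcinkiewicz interpolation and duality — is the standard and correct one.

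However, the two intermediate kernel bounds you assert are the wrong ones, and one of them is not obtainable by the method you describe. You claim the uniform-in-$j$ estimate $\int |x|\,|\nabla K_j(x)|\,dx\le C$. Write $N=\lfloor n/2\rfloor+1$. In the regime $|x|>2^{-j}$, the Cauchy–Schwarz split you propose produces the factor
\[
\left(\int_{|x|>2^{-j}}|x|^{2-2N}\,dx\right)^{1/2},
\]
and in polar coordinates the integrand is $r^{\,n+1-2N}$, which is integrable at infinity only when $N>\tfrac{n}{2}+1$. Since $N=\lfloor n/2\rfloor+1\le\tfrac{n}{2}+1$, this integral diverges for every $n$, so the stated bound does not follow. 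Moreover, even if both uniform bounds $\int|K_j|\le C$ and $\int|x||\nabla K_j|\le C$ held, they would yield only $\int_{|x|>2|y|}|K_j(x-y)-K_j(x)|\,dx\lesssim 1$ for each $j$, and summing a constant over $j\in\bbZ$ diverges; the near/far split in $2^j|y|$ does nothing unless you produce genuinely $j$-dependent, geometrically decaying bounds in each regime. The estimates that do work — and \emph{do} follow from Cauchy–Schwarz with $N$ derivatives precisely because $(1+2^j|x|)^{-N}\in L^2$ — are $\int_{|x|>|y|}|K_j|\,dx\lesssim(2^j|y|)^{\,n/2-N}$, used when $2^j|y|\ge 1$ (note $n/2-N<0$, so this decays), and $\int|\nabla K_j|\,dx\lesssim 2^j$, used with the mean-value factor $|y|$ when $2^j|y|<1$ to give a contribution $\lesssim 2^j|y|$. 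Both resulting sums over $j$ are geometric. Replace your two uniform bounds with these $j$-dependent ones and the rest of your argument goes through.
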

The main concern of this article is the case of pseudo-differential operators, which are operators defined by functions $\sigma(x,\xi)$ (later on we will specify where the variables belong to), so we move on the discussion towards that direction. The problem consists of identifying conditions like \eqref{HorM}, but for the two variable dependent functions $\sigma(x,\xi)$. This led mathematicians to construct different kinds of classes of functions in which it is possible to control the growth of their objects in a good way, making it possible to obtain inequalities of the type \eqref{Objec}. For the case of $\bbR^n$ everything started with the pioneering works of Calderón-Zygmund \cite{calzyg} and Kohn-Nirenberg \cite{Kohn}, but probably the H\"ormander classes $S_{\rho,\delta}^m$ \cite{Hormander} are the most studied ones since they are general enough to obtain useful results, even though there are much more general classes which have these ones as a particular case. For now, let us focus on the classes $S_{\rho,\delta}^m$, so we briefly recall its definition, for more details see e.g. \cite{hor, shu}. Let $m,\rho,\delta$ be real numbers such that $0\leq\delta\leq 1$, $0\leq\rho\leq 1$, and let $X$ be an open set of $\bbR^n$. The class $S_{\rho,\delta}^m(X,\bbR^n)$ consists of functions $\sigma\in C^\infty(X\times\bbR^n)$ such that for any multi-indices $\alpha,\beta$ and any compact set $K\subset X$ there exists a constant $C_{\alpha,\beta,K}$ such that 
\begin{equation}\label{defrn}
    |\partial_\xi^\alpha\partial_x^\beta \sigma(x,\xi)|\leq C_{\alpha,\beta,K} (1+|\xi|)^{m-\rho|\alpha|+\delta|\beta|},
\end{equation}
for all $x\in K$ and $\xi\in\bbR^n$. Thus, one can use these symbols to define pseudo-differential operators as
\[
\sigma(X,D)f(x)= \frac{1}{(2\pi)^n}\int_{\bbR^n} e^{ix\cdot \xi}\, \sigma(x,\xi) \widehat{f}(\xi)\, d\xi. 
\]
One of the most important results on the problem of bounding these operators was obtained by Fefferman \cite{Fefferman}. This was possible because of his previous work with Stein \cite{fstein}, where they introduced Hardy spaces in several variables and found their relation with the $\operatorname{BMO}$ space of John and Nirenberg \cite{JN}, which ended up being the suitable spaces to perform a proper interpolation. His result is as follows:

\begin{thm}[Fefferman \cite{Fefferman}]\label{feff}
Let $\sigma\in S_{1-a, \delta}^{-\beta}\left(\bbR^n\right)$ with $0 \leqq \delta<1-a<1$ and $\beta<n a / 2$.
    \begin{itemize}
        \item [a)]  Then $\sigma(X, D)$ is bounded on $L^p$ for
$$
\left|\frac{1}{p}-\frac{1}{2}\right| \leqq \gamma=\frac{\beta}{n}\left[\frac{n / 2+\lambda}{\beta+\lambda}\right], \lambda=\frac{n a / 2-\beta}{1-a} .
$$
\item [b)] If $|1 / p-1 / 2|>\gamma$, then the symbol
$$
\sigma(x, \xi)=\sigma_{\alpha \beta}(\xi)=\frac{e^{i|\xi|^\alpha}}{1+|\xi|^\beta} \in S_{1-a ,0}^{-\beta}
$$
provides an operator $\sigma_{\alpha \beta}(D)$ unbounded on $L^p$.
\item [c)] Let $\sigma \in S_{1-a \delta}^{-n a / 2}$, so that the critical $L^p$ space is $L^1$. Although $\sigma(x, D)$ is unbounded on $L^1$, it is bounded from the Hardy space $H^1$ to $L^1$. 
    \end{itemize}
\end{thm}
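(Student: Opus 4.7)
The natural plan is to handle (c) first, as the $L^p$-statement (a) follows from it by interpolation with an $L^2$-bound, while (b) is a self-contained oscillatory-integral computation. For the $L^2$ endpoint one invokes the Calder\'on--Vaillancourt theorem, which applies to $S_{1-a,\delta}^0(\bbR^n)$ because the hypothesis $\delta<1-a$ gives $\delta\leq\rho$ with $\rho=1-a$. Interpolating between $(L^2,L^2)$ at $\beta=0$ and $(H^1,L^1)$ at $\beta=na/2$ by Stein's complex interpolation theorem, applied to the analytic family $\sigma_z(x,\xi)=(1+|\xi|^2)^{-z/2}\sigma(x,\xi)$, and then dualizing to obtain the range $p>2$, yields precisely the exponent $\gamma$ in the statement. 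This reduces the core problem to the endpoint $H^1\to L^1$ estimate.

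The main work is therefore in (c). I would use the atomic decomposition of $H^1(\bbR^n)$ and reduce the problem to a uniform bound $\|\sigma(X,D)a\|_{L^1(\bbR^n)}\leq C$ on the image of a single atom $a$ supported in a ball $B=B(x_0,r)$, of mean zero, with $\|a\|_\infty\leq |B|^{-1}$. Next I decompose the symbol dyadically, $\sigma=\sum_{j\geq 0}\sigma_j$, with each $\sigma_j$ localized in $|\xi|\sim 2^j$, and split $\bbR^n$ into an enlarged ball $B_j^\ast$ around $x_0$ with radius adapted to the critical scale $2^{-j(1-a)}$, together with its complement. On $B_j^\ast$ one uses Cauchy--Schwarz together with the $L^2$-bound for $\sigma_j(X,D)$, of size $O(2^{-j\beta})$, which is a consequence of $\sigma\in S^{-\beta}_{1-a,\delta}$ and Calder\'on--Vaillancourt applied to the rescaled symbol. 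On the complement one performs iterated integration by parts in $\xi$ in the kernel representation of $\sigma_j(X,D)$, using that the $\rho$-gain dominates the $\delta$-loss (since $\delta<1-a$) and exploiting the vanishing moment of $a$ to pick up an additional factor of size $r\cdot 2^j$. Summing in $j$ and using the strict inequality $\beta<na/2$ turns the resulting dyadic sum into a convergent geometric series, giving the desired uniform bound.

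The hardest point is choosing the enlargement scale of $B_j^\ast$ so that the $L^2$-bound on $B_j^\ast$ and the kernel decay on its complement balance exactly at the critical order $\beta=na/2$; this is the point where the Fefferman--Stein $H^1$--$\operatorname{BMO}$ duality machinery must be used, and it is also what dictates the optimal exponent $\gamma$ through the interpolation line. For (b), the Fourier multiplier $\sigma_{\alpha\beta}(\xi)=e^{i|\xi|^\alpha}/(1+|\xi|^\beta)$ has a convolution kernel that stationary phase concentrates on a hypersurface whose curvature is controlled by $\alpha$; testing the operator against a Schwartz function whose Fourier transform is localized on a single dyadic annulus produces an explicit output whose $L^p$-scaling can be computed directly and shown to violate the bound of (a) whenever $|1/p-1/2|>\gamma$, confirming the sharpness of the range.
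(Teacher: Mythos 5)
Your proposal produces a valid proof, but it reverses the order of Fefferman's original argument, which the paper's manifold analogue (Theorem \ref{main}, proved via Lemma \ref{lem2}, Theorem \ref{bmo}, Theorem \ref{lpParaOrdenFijo}, Theorem \ref{lpFinal}) faithfully reproduces. Fefferman proves the endpoint $L^\infty \to \operatorname{BMO}$ \emph{directly}: fixing a ball of radius $r$, he splits the symbol at frequency $|\xi|\sim r^{-1}$ into $\sigma^0+\sigma^1$, bounds the low-frequency oscillation on the ball via a mean-value estimate (using the $L^\infty$ bound of Lemma \ref{lem2} applied to $\partial_{x}\sigma^0(X,D)$), and handles the high-frequency piece by a commutator identity together with the $L^2$ estimate and a Bessel-potential reduction. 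The $H^1\to L^1$ statement (c) is then a corollary by $H^1$--$\operatorname{BMO}$ duality applied to the adjoint (which stays in the class because $\delta<\rho$), and (a) follows by Fefferman--Stein interpolation against $L^2$ followed by Stein complex interpolation in the order parameter. You instead prove (c) directly via the atomic decomposition and deduce (a) by the same interpolation; this is the dual route, closer to the textbook treatment in Stein's harmonic analysis book, and is equally legitimate. What Fefferman's route buys is that $L^\infty\to\operatorname{BMO}$ is exactly the endpoint the Fefferman--Stein interpolation theorem wants, so no dualization is needed before interpolating; what the atomic route buys is that the kernel estimates near and far from the atom's support are cleanly separated without introducing commutators.

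Two details to tighten. First, it is worth observing that the formula in (a) collapses: substituting $\lambda=\frac{na/2-\beta}{1-a}$ one checks $\beta+\lambda=a(n/2+\lambda)$, so $\gamma=\beta/(na)$, which is exactly the straight interpolation line between $(L^2,L^2)$ at order $0$ and $(H^1,L^1)$ at order $-na/2$; otherwise the claim that linear interpolation ``yields precisely the exponent $\gamma$'' looks like a gap. Second, Stein's complex interpolation requires an admissible-growth bound on the analytic family, which is why the paper inserts a damping factor $e^{z^2}$ into its family $\mathfrak{a}_z$; your family $\sigma_z=(1+|\xi|^2)^{-z/2}\sigma$ needs such a factor as well. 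Finally, in the direct atomic proof of (c) the Fefferman--Stein duality machinery plays no role; the enlargement scale of $B_j^*$ is chosen purely to balance the Cauchy--Schwarz/$L^2$ estimate against the integration-by-parts decay, and the sharp exponent $\gamma$ comes from the subsequent interpolation, not from that balance.
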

The objective of this article is to extend this result to a certain type of manifolds $M$ for relatively new classes of symbols constructed by Safarov in \cite{Safarov}. Of course Fefferman's result extends immediately to the classical H\"ormander classes $\Psi_{\rho, \delta, loc}^m(M)$ on manifolds since those are defined by means of local charts. However, one has to remember that there is not complete freedom on the choice of the numbers $\rho, \delta$, as in the case of $\bbR^n$, because those classes must satisfy the restriction $\rho>1/2$ due to the coordinate invariance requirement. When working with the Safarov classes $\Psi_{\rho, \delta}^{m}\left(\Omega^{\kappa}, \nabla, \tau\right)$ (see Section \ref{prelimi} for the definition) we are able to extend Fefferman's result for the case $\rho>1/3$ and even $\rho>0$, but the price to pay is some extra geometrical conditions. These classes are not defined using the local charts theory, which relies on local Fourier transform, but using oscillatory integrals given by global symbols. What we mean by global symbols is that the whole symbol $\sigma$ will be well-defined in the cotangent bundle $T^*M$, and not only the principal part as in the classical theory. The latter is achieved by using a well-defined global object on $M$ such as a linear connection $\nabla$. Thus, the extra geometric conditions we mentioned before would be conditions on the connection, which are being symmetric or flat, depending on the condition on $\rho$ (see Theorem \ref{main}). 

Remember that a connection is called \emph{symmetric} if its torsion tensor $T$ is zero, moreover if its curvature tensor $R$ is also zero the connection is called \emph{flat}.  These conditions were used by Safarov to obtain good behaviour of compositions, $L^2$-boundedness, existence of parametrices, etc., and more recently an approximate spectral projection \cite{mcK}. 
\begin{rem}
    The classes $\Psi_{\rho, \delta}^{m}\left(\Omega^{\kappa}, \nabla, \tau\right)$ will not depend on $\nabla$ if we stick to the condition $\rho>1/2$, and in this case they coincide with the classical H\"ormander classes $\Psi_{\rho, \delta, loc}^m(M)$. In other cases, the classes do depend on the choice of connection. Fortunately, for the case $\rho>1/3$ we always have a canonical symmetric connection, the famous Levi-Civita connection. 
\end{rem}
Let us mention that this is not the only-existing approach to defining symbol classes on manifolds utilizing connections, for instance one can find the works of Widom \cite{w1, w2}, Fulling-Kennedy \cite{fk}, Sharafutdinov \cite{sha,sha2}, etc. Moreover, the idea of working with global symbols appear in other contexts such as Lie groups $G$, where one has access to a global Fourier transform based on the representation theory of the group. For example, the theories developed by the second author and collaborators for the cases of compact Lie groups \cite{RuT} and nilpotent Lie groups \cite{FiR}; where in both cases the ``phase space" $T^*G$ is replaced by the direct product of the Lie group and its unitary dual, i.e. they considered symbols defined on $G\times \widehat{G}$. We want to highlight that $L^p-L^p$ and $L^p-L^q$ results are available in these calculi, so we mention some of the corresponding works \cite{lpgrad, lplqgrad, lpcomp}. 

Coming back to our problem, we are going to consider it on complete Riemannian manifolds $(M, g)$ where operators will be acting on $\kappa$-densities. We have to say that the main difficulty when proving an analogue of Theorem \ref{feff} in our setting is the fact that the pseudo-differential operators are not defined by means of a Fourier transform, so that we have to deal directly with the Schwartz kernels of the operators and adapt the techniques for them. We overcome this problem by using representations of such kernels which are strongly related with the local geometry of the manifold. In order to handle this local geometry in the estimation processes we assume the manifold is of bounded geometry (see e.g. \cite{chee}). 
\begin{defn}\label{boundedGeometry}
    We say that a complete Riemannian manifold $(M, g)$ is of \emph{bounded geometry} if
    \begin{enumerate}
        \item The injectivity radius of $M$, denoted by $r_0$, is strictly positive.
        \item In any geodesic ball $B_{r}(x)$ the metric tensor $g_{ij}$ in normal coordinates is bounded in the $C^\infty$ topology of $T_xM$, and its inverse tensor $g^{ij}$ is bounded in sup norm.  
    \end{enumerate}
\end{defn}
Some examples of manifolds satisfying all previous conditions are Euclidean spaces, compact manifolds, Lie groups, and some non-compact symmetric spaces. An important consequence for us of the bounded geometry assumption is that we know the growth behaviour of the balls of any radius in $M$ (not necessarily the ones with radius smaller than the injectivity radius), it is established in the following lemma by Taylor \cite{Taylor1} (this is a particular case of a more general result, Proposition 4.1 in \cite{chee}). 
\begin{lem}
\label{volGrowth}
    Let $(M,g)$ be a complete Riemannian manifold of bounded geometry. Then there exists $C_0=C_0(M)$, $\mu_0=\mu_0(M)$, and $K_0=K_0(M)$ such that for each $x\in M$, $r\in (0,\infty)$, we have
    \begin{equation}
        Vol B_r(x) \leq C_0(1+r)^{\mu_0}e^{K_0 r}. 
    \end{equation}
\end{lem}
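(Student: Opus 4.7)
The plan is to split the estimate into two regimes and patch them together. First I would extract from the bounded-geometry hypothesis a uniform lower bound on the Ricci curvature: since $g_{ij}$ is bounded in the $C^{\infty}$ topology in every normal chart of radius $r_0$, every component of the Riemann tensor is uniformly bounded, and in particular $\operatorname{Ric}_g \geq -(n-1)K$ pointwise on $M$ for some constant $K>0$ depending only on $M$, where $n=\dim M$.

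For the short-range estimate $r\leq r_0$, I would pull the Riemannian volume form back to $T_xM$ via the exponential map $\exp_x : B_{r_0}(0_x)\to B_{r_0}(x)$, which is a diffeomorphism by the definition of the injectivity radius. In normal coordinates one has $g_{ij}(0_x)=\delta_{ij}$ and uniform $C^0$ control on $\sqrt{\det g_{ij}}$, so
\[
\operatorname{Vol} B_r(x) \leq C_1 r^n, \qquad 0<r\leq r_0,
\]
with $C_1$ independent of $x$.

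For the long-range estimate $r>r_0$ I would invoke Bishop--Gromov volume comparison: completeness of $M$ together with $\operatorname{Ric}_g \geq -(n-1)K$ gives $\operatorname{Vol} B_r(x) \leq V_{-K}(r)$, where
\[
V_{-K}(r) = \omega_{n-1}\int_0^r \left(\frac{\sinh(\sqrt{K}\,s)}{\sqrt{K}}\right)^{n-1} ds
\]
is the volume of a ball of radius $r$ in the simply-connected space form of constant sectional curvature $-K$. Since $\sinh(\sqrt{K}s)\leq \tfrac{1}{2}e^{\sqrt{K}s}$, this model volume is bounded above by $C_2(1+r)^{\mu_0}e^{K_0 r}$, for example with $\mu_0 = n-1$ and $K_0 = (n-1)\sqrt{K}$. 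Matching the two regimes at $r=r_0$ and choosing the multiplicative constant large enough yields the stated estimate with $C_0,\mu_0,K_0$ depending only on $M$.

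The main technical point is justifying the Ricci lower bound from Definition \ref{boundedGeometry}; once that is in hand, Bishop--Gromov does all the work. If one wished to avoid comparison geometry altogether, an equivalent route is a covering argument: cover $B_r(x)$ by geodesic balls of some fixed radius $\leq r_0/4$ centered on a maximal separated set, bound the number of such balls by a packing estimate controlled by the short-range inequality above, and iterate outward; the exponential factor then emerges from the number of generations needed to exhaust $B_r(x)$. Either path produces an estimate of the claimed shape.
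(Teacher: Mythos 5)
The paper cites this lemma from Taylor and from Cheeger--Gromov--Taylor rather than proving it, so there is no in-text proof to compare against; your argument stands on its own and is correct. The logic is sound: bounded geometry as given in Definition \ref{boundedGeometry}(2) controls all second derivatives of $g_{ij}$ in normal coordinates, hence all components of the Riemann (and Ricci) tensor uniformly, so a uniform lower Ricci bound $\operatorname{Ric}\geq -(n-1)K$ does follow; completeness plus this bound make Bishop--Gromov applicable; and the hyperbolic model volume is bounded by $C(1+r)^{\mu_0}e^{K_0 r}$ as you compute. One small redundancy: you do not actually need the two-regime split or the injectivity radius here, since Bishop--Gromov gives $\operatorname{Vol}B_r(x)\leq V_{-K}(r)$ for \emph{all} $r>0$ from completeness and the Ricci bound alone, and $V_{-K}(r)\sim \omega_n r^n$ already handles the short range. (The injectivity radius enters when one wants a matching \emph{lower} volume bound, not here.) The alternative packing/covering argument you sketch at the end is also a legitimate route and is closer in spirit to the way Cheeger--Gromov--Taylor organize their more general Proposition 4.1, but for the stated inequality Bishop--Gromov is the shortest path.
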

Constants $C_0$, $\mu_0$, and $K_0$ are fixed in the rest of the article. 

As we mentioned before, our symbols classes will be defined by means of a linear connection $\nabla$, and in order to perform certain estimations we will need a similar condition as $(2)$ in the previous definition but for the Christoffel symbols of the connection:
\begin{defn}
    We say that a linear connection $\nabla$ is bounded if in any geodesic ball $B_{r}(x)$ the Christoffel symbols $\Gamma_{ij}^k$ in normal coordinates are bounded in the $C^\infty$ topology of $B_{r}(x)$.  
\end{defn}
\begin{rem}
    Note that if $(M,g)$ is of bounded geometry, then condition $(2)$ of Definition \ref{boundedGeometry} implies that the Levi-Civita connection $\nabla_{LC}$ is bounded. Therefore in this situation we can at least find a bounded connection. Henceforth when we ask the triple $(M,g,\nabla)$ to be of \emph{bounded geometry} we mean that $(M,g)$ is a complete Riemannian manifold of bounded geometry and that $\nabla$ is bounded. 
\end{rem} 

Moreover, one has to be a bit careful since we are allowing operators to act on $\kappa$-densities and not only on functions. Therefore one may adapt definitions of function spaces to this situation, but this is possible since the bundle of $\kappa$-densities is a trivial line bundle. 

We will say that an operator is \emph{locally bounded} on $L^p(M,\Omega^\kappa)$ if it is bounded from $L^p_{comp}(M,\Omega^\kappa)$ to $L^p_{loc}(M,\Omega^\kappa)$.  We will say it is \emph{globally bounded} if it is bounded from $L^p(M,\Omega^\kappa)$ to $L^p(M,\Omega^\kappa)$. 

Let us enunciate the main result of this article regarding the $L^p-L^p$ boundedness of operators belonging to Safarov classes (for precise definitions see Section \ref{prelimi}):
\begin{thm}
\label{main}
Let $(M,g, \nabla)$ be a complete Riemannian manifold of bounded geometry. Let $\theta\in [0,\frac{n(1-\rho)}{2})$ and $a\in S_{\rho, \delta}^{-\theta}(\nabla)$ with $0 \leq \delta<\rho\leq 1$. Suppose that at least one of the following conditions is fulfilled:
\begin{enumerate}
    \item $\rho>\frac{1}{2}$;
    \item the connection $\nabla$ is symmetric and $\rho>\frac{1}{3}$;
    \item the connection $\nabla$ is flat.
\end{enumerate}
Then we have the following.
 \begin{itemize}
        \item [a)]  The operator $a(X, D)$ is bounded from $L^p_{comp}(M, \Omega^\kappa)$ to $L^p_{loc}(M, \Omega^\kappa)$ for 
$$\left|\frac{1}{p}-\frac{1}{2}\right|\leq \frac{\theta}{n(1-\rho)}.$$  
\item [b)] This result is sharp, i.e., if $|1/p-1/2|>\theta/(n(1-\rho))$, then the symbol 
$$a(x,\xi) = a_{\rho\theta}(\xi) = \frac{e^{i|\xi|_{x_0}^{1-\rho}}}{1 + |\xi|_{x_0}^\theta}\in S_{\rho,0}^{-\theta}(\nabla)$$
gives an operator $a_{\alpha\beta}(D)$ unbounded on $L^p(\bbS^1, \Omega^\kappa)$, where $x_0$ is any fixed point on $\bbS^1$. 
\item[c)] Let $a\in S_{\rho, \delta}^{-\frac{n(1-\rho)}{2}}(\nabla)$, so that the critical $L^p(M, \Omega^\kappa)$ space is $L^1(M, \Omega^\kappa)$. Then $a(X, D)$ is bounded from the Hardy space $H_{comp}^1(M, \Omega^\kappa)$ to $L_{loc}^1(M,\Omega^\kappa)$. 
    \end{itemize}
    Moreover, if $a\in S_{\rho, \delta}^{-\theta}(\nabla)_{glo}$ then these boundedness results are global. 
\end{thm}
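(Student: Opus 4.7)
The plan is to follow the blueprint of Fefferman's original argument, adapting each step to the manifold setting where a global Fourier transform is unavailable. The strategy is to establish two endpoint bounds and then interpolate: at one end, $L^2 \to L^2$ boundedness of $a(X,D)$ (for $\theta \geq 0$), which should already be extractable from Safarov's $L^2$ theory under each of the three geometric hypotheses; at the other end, the critical $H^1_{comp} \to L^1_{loc}$ bound for $\theta = n(1-\rho)/2$. Complex interpolation (between analytic families obtained by shifting the order $-\theta$ by an imaginary parameter) then yields part (a) for $1 < p \leq 2$, duality gives the range $2 \leq p < \infty$, and part (c) is the endpoint case itself. The global version follows by keeping track that every kernel estimate is uniform in base point, which is exactly what bounded geometry of $(M,g,\nabla)$ buys us.

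The technical heart is the $H^1 \to L^1$ bound. I would perform a Littlewood--Paley decomposition in the cotangent fiber, writing $a = \sum_{k \geq 0} a_k$ with $a_k$ essentially supported on $|\xi|_x \sim 2^k$, and denote by $K_k(x,y)$ the Schwartz kernel of $a_k(X,D)$. Using Safarov's representation of the operator as an oscillatory integral whose phase is built from the exponential map and the connection $\nabla$, repeated integration by parts in the $\xi$ variable (with derivatives absorbed by factors of $d(x,y)^{-1}$ coming from the phase) yields pointwise bounds of the form $|K_k(x,y)| \lesssim 2^{k(n-\theta)}(1 + 2^{k\rho}d(x,y))^{-N}$ for any $N$, provided $d(x,y) \lesssim r_0$. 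The bounded geometry hypothesis together with Lemma~\ref{volGrowth} turns these into finite and uniform $L^1$-in-$y$ mass estimates $\|K_k(x,\cdot)\|_{L^1} \lesssim 2^{-k\theta}$ on scales smaller than the injectivity radius, and an analogous rapidly decaying tail estimate beyond it.

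The atomic argument then proceeds as follows. Given an $H^1$-atom $a$ supported in a geodesic ball $B_r(x_0)$ with $\|a\|_\infty \leq \operatorname{Vol}(B_r(x_0))^{-1}$ and $\int a = 0$, split $k$ into the low regime $2^{-k\rho} \geq r$ and the high regime $2^{-k\rho} < r$. For the low regime I would estimate $a_k(X,D)a$ in $L^2$ using the $L^2$-boundedness of $a_k(X,D)$, then pass to $L^1$ on a dilate of $B_r(x_0)$ via Cauchy--Schwarz, accumulating a geometric series. For the high regime I would exploit the mean-zero property of $a$ by writing $a_k(X,D)a(x) = \int (K_k(x,y) - K_k(x,x_0)) a(y)\, dy$ and using a mean-value bound on $K_k$ in the second variable, again obtained by integration by parts using that $\nabla$ is bounded. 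Summing both regimes yields $\|a(X,D)a\|_{L^1_{loc}} \lesssim 1$ uniformly in the atom, which is the required bound. Part (b) is proved by direct computation on $\mathbb{S}^1$: for this symbol $a_{\rho\theta}$ depending only on $\xi$, $a(X,D)$ becomes a Fourier multiplier, and stationary phase shows the Schwartz kernel fails to lie in the relevant Lorentz space exactly at the conjectured threshold, mirroring the classical Euclidean counterexample.

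The main obstacle will be the kernel estimates themselves in cases (2) and (3). Once $\rho$ drops below $1/2$, the Safarov calculus only behaves well under additional structure on $\nabla$: when $\nabla$ is symmetric one can integrate by parts cleanly for $\rho > 1/3$ because the remainder terms from the Baker--Campbell--Hausdorff-type expansion of the composition phase stay in a controlled subclass, and when $\nabla$ is flat one has a much sharper phase and the same control for arbitrary $\rho, \delta$. Carefully tracking these remainders, and verifying that the uniform estimates needed for atoms of arbitrary location persist under only bounded geometry (rather than compactness), is the most delicate part of the proof; in particular, the equivalence of the various symbol semi-norms under changes of base point, which on $\bbR^n$ is automatic by translation invariance, must here be established explicitly using the boundedness of the Christoffel symbols of $\nabla$ in normal coordinates.
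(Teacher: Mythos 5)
Your high-level architecture matches the paper's: $L^2\to L^2$ at one end (Theorem \ref{L2bound}), an endpoint estimate at the critical order $-n(1-\rho)/2$, Stein complex interpolation over an analytic family $\mathfrak{a}_z$ to cover the full range of $\theta$, duality for $p>2$, and Assumption \ref{assu} for the global statement. Where you diverge is the endpoint. The paper, following Fefferman's original argument, proves $L^\infty \to \operatorname{BMO}$ (Theorem \ref{bmo}) via a low/high frequency split, a mean-value argument fed into Lemma \ref{lem2} for the low part, and a commutator trick $\phi\, a^1(X,D) = a^1(X,D)(\phi\,\cdot) + [\phi, a^1(X,D)]$ for the high part, with the first commutator piece controlled by Bessel potentials and Safarov's $L^2$ calculus; $H^1\to L^1$ then falls out of Taylor's duality $H^1(M,\Omega^\kappa)' = \operatorname{BMO}(M,\Omega^{1-\kappa})$ and Theorem \ref{adjunto}. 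You instead try a direct atomic $H^1\to L^1$ argument with dyadic kernel estimates. That is a genuinely different route, and unfortunately it has a gap at its core.

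The central claimed estimate $\|K_k(x,\cdot)\|_{L^1}\lesssim 2^{-k\theta}$ is false whenever $\rho<1$. From your (correct) pointwise bound $|K_k(x,y)|\lesssim 2^{k(n-\theta)}(1+2^{k\rho}d(x,y))^{-N}$, the $L^1_y$ norm is controlled only on the scale $d(x,y)\sim 2^{-k\rho}$, giving $\|K_k(x,\cdot)\|_{L^1}\sim 2^{k(n-\theta)}2^{-k\rho n}=2^{k(n(1-\rho)-\theta)}$, which at the critical order $\theta=n(1-\rho)/2$ grows like $2^{kn(1-\rho)/2}$. This is not a cosmetic miscalculation: it is exactly the reason these operators are not of Calderón--Zygmund type for $\rho<1$ and why Fefferman's theorem is a theorem at all. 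With the corrected kernel mass, your ``low regime'' cancellation estimate accrues a factor $r\,2^{k(1+n(1-\rho)-\theta)}$ per scale, and your ``high regime'' $L^2$ estimate on a dilated ball accrues $2^{-kn(1-\rho)/2}$, but the far-field tail of the high-frequency pieces contributes $\sim r^{n-(n-\theta)/\rho}$, whose exponent is negative for $\theta=n(1-\rho)/2$ and $\rho<1$. You also have the roles of the two regimes swapped relative to what is standard (cancellation when the kernel scale exceeds the atom, $L^2$ when it is finer), but even in the correct order the naive atomic sum does not close. What is missing is the mechanism the paper uses: bringing in a bump function $\phi$ with $\widehat{\phi}$ supported on scale $r^{1/\rho}$, estimating $\|\phi\|_{H^{-n(1-\rho)/2}}\lesssim r^{n/2}$, composing with $\mathfrak{B}^{\pm n(1-\rho)/2}\in\Psi^{\pm n(1-\rho)/2}_{\rho,\delta}(\Omega^\kappa,\nabla)$, and absorbing the commutator contribution through a dyadic decomposition that gains $2^{-j\rho}$ per annulus. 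Without this extra structure, the endpoint cannot be reached by kernel estimates alone.

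Two smaller remarks. Your Littlewood--Paley pieces and the stationary-phase sketch for part (b) on $\bbS^1$ are fine in spirit; the paper simply invokes the Hardy--Littlewood--Hirschman--Wainger counterexample directly. And your final paragraph correctly identifies where the geometric hypotheses (1)--(3) enter, namely the composition calculus (Theorem \ref{producto}) used to manipulate $a^1(X,D)\mathfrak{B}^{n(1-\rho)/2}$, which is why $\rho>1/3$ with $\nabla$ symmetric, or $\nabla$ flat, are exactly the hypotheses needed.
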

Furthermore, as an application of Theorem \ref{main} combined with some results on Bessel potentials (see Section \ref{prelimi}) we are able to obtain the following boundedness on Sobolev and Besov spaces, and some $L^p-L^q$ results: 
\begin{thm}\label{main2}
    Let $(M,g, \nabla)$ be a complete Riemannian manifold of bounded geometry. Let $\theta\in \bbR$, $0<q\leq \infty$, and $a\in S_{\rho, \delta}^{-\theta}(\nabla)$ with $0 \leq \delta<\rho\leq 1$. Let $s_1,s_2\in\bbR$ be such that
    \begin{equation}
        -\frac{n(1-\rho)}{2}<-\theta-s_1+s_2\leq 0.
    \end{equation}
    Suppose at least one of the following conditions is fulfilled:
\begin{enumerate}
    \item $\rho>\frac{1}{2}$;
    \item the connection $\nabla$ is symmetric and $\rho>\frac{1}{3}$;
    \item the connection $\nabla$ is flat.
\end{enumerate}
Then $a(X, D)$ is bounded from $L_{s_1, comp}^p(M,\Omega^\kappa)$ to $L_{s_2, loc}^p(M, \Omega^\kappa)$ for 
$$\left|\frac{1}{p}-\frac{1}{2}\right|\leq \frac{\theta}{n(1-\rho)}.$$
Moreover, if $\theta\in [0,\frac{n(1-\rho)}{2})$ and $s_1=s_2=s$, for $s\in \bbR$ fixed, then $a(X, D)$ is bounded from $B_{p,q}^s(M,\Omega^\kappa)$ to $B_{p,q}^s(M,\Omega^\kappa)$ for the same range of the values of $p$.  Furthermore, if $a\in S_{\rho, \delta}^{-\theta}(\nabla)_{glo}$, these previous boundedness results are global. 
\end{thm}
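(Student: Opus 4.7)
The plan is to reduce Theorem \ref{main2} to Theorem \ref{main} by conjugating $a(X,D)$ with Bessel potentials, and to treat Besov spaces via real interpolation. Let $\Lambda$ denote a suitably normalized Bessel potential on $(M,g,\nabla)$ whose $s$-th power $\Lambda^s$ belongs to the Safarov class with symbol in $S^s_{1,0}(\nabla)$ and such that $\|f\|_{L^p_s(M,\Omega^\kappa)} \approx \|\Lambda^s f\|_{L^p(M,\Omega^\kappa)}$; these Bessel potentials and their mapping properties are developed in Section \ref{prelimi}. Decomposing
\[
a(X,D) = \Lambda^{-s_2} \circ \bigl(\Lambda^{s_2}\, a(X,D)\, \Lambda^{-s_1}\bigr) \circ \Lambda^{s_1},
\]
the Sobolev boundedness $L^p_{s_1,comp}(M,\Omega^\kappa) \to L^p_{s_2,loc}(M,\Omega^\kappa)$ becomes equivalent to the $L^p_{comp}\to L^p_{loc}$ boundedness of the conjugated operator $\Lambda^{s_2}\, a(X,D)\, \Lambda^{-s_1}$.

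By the composition calculus for Safarov symbols (valid under the hypotheses on $\rho$, $\delta$, $\nabla$ assumed in Theorem \ref{main}), the symbol of $\Lambda^{s_2}\, a(X,D)\, \Lambda^{-s_1}$ lies in $S^{-\theta - s_1 + s_2}_{\rho,\delta}(\nabla)$. The hypothesis $-\frac{n(1-\rho)}{2} < -\theta - s_1 + s_2 \leq 0$ places the effective order $\theta' := \theta + s_1 - s_2$ in the interval $[0,\frac{n(1-\rho)}{2})$, so Theorem \ref{main}(a) yields $L^p$-boundedness in the range $|1/p-1/2|\leq \theta'/(n(1-\rho))$. Since $\theta'\leq \theta$, this contains the stated range $|1/p-1/2|\leq \theta/(n(1-\rho))$, completing the Sobolev case. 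The global version is identical, using that $S^{-\theta}_{\rho,\delta}(\nabla)_{glo}$ is preserved under composition with $\Lambda^{\pm s}$ and invoking the global part of Theorem \ref{main}.

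For the Besov assertion, fix $s_1=s_2=s$ and $\theta\in [0,\frac{n(1-\rho)}{2})$; the Sobolev bound just established then gives boundedness of $a(X,D)$ on $L^p_{s\pm\varepsilon}(M,\Omega^\kappa)$ for every sufficiently small $\varepsilon>0$. Combining this with the real-interpolation characterization
\[
B^s_{p,q}(M,\Omega^\kappa) = \bigl(L^p_{s-\varepsilon}(M,\Omega^\kappa),\, L^p_{s+\varepsilon}(M,\Omega^\kappa)\bigr)_{1/2,\,q},
\]
valid on manifolds of bounded geometry, yields the $B^s_{p,q}\to B^s_{p,q}$ bound for the same range of $p$; the compactly supported/locally finite versions pass through by cutting off with test functions and using the pseudolocality of Bessel potentials modulo smoothing.

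The main obstacle is verifying that Bessel potentials interact cleanly with the Safarov calculus: one needs $\Lambda^s$ to be a Safarov pseudo-differential operator with symbol in $S^s_{1,0}(\nabla)_{glo}$ (at least modulo smoothing operators), the composition $\Lambda^{s_2}\, a(X,D)\, \Lambda^{-s_1}$ must land in the Safarov class of order $-\theta - s_1 + s_2$ with quantitative control of the Bessel kernel tails in the comp-to-loc setting, and the real interpolation identity above must be established in our bounded-geometry framework. These three ingredients are the substantive content imported from Section \ref{prelimi}; once they are in hand, the argument above is a routine conjugation-and-interpolation reduction to Theorem \ref{main}.
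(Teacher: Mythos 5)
Your approach matches the paper's: conjugate by Bessel potentials $\mathfrak{B}^{\pm s}$, use the composition calculus (Theorem \ref{producto} together with \eqref{besselEnSafarov}) to identify $\mathfrak{B}^{s_2}a(X,D)\mathfrak{B}^{-s_1}$ as an operator of order $-\theta-s_1+s_2$, apply Theorem \ref{lpFinal}/\ref{main}, and then obtain the Besov bound by real interpolation between Sobolev scales via Corollary \ref{interoplacionSobolevBesov}. The paper's proof of Corollary \ref{cotaSobolev} and of the Besov corollary is exactly this reduction.

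There is, however, a genuine logical slip in the sentence ``Since $\theta'\leq\theta$, this contains the stated range.'' Two problems. First, $\theta'=\theta+s_1-s_2\leq\theta$ is equivalent to $s_1\leq s_2$, which is nowhere in the hypotheses; the constraint $-\tfrac{n(1-\rho)}{2}<-\theta-s_1+s_2\leq 0$ controls only $\theta'\in[0,\tfrac{n(1-\rho)}{2})$ and says nothing about the sign of $s_1-s_2$. Second, the implication is reversed: if $\theta'\leq\theta$ then $\{\,p:|1/p-1/2|\leq\theta'/(n(1-\rho))\,\}$ is a \emph{subset} of $\{\,p:|1/p-1/2|\leq\theta/(n(1-\rho))\,\}$, so Theorem \ref{main}(a) applied to the conjugated operator would deliver \emph{less} than the stated range, not more. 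What the conjugation argument actually establishes is boundedness in the range $|1/p-1/2|\leq\theta'/(n(1-\rho))$ with $\theta'=\theta+s_1-s_2$; this coincides with the range written in Theorem \ref{main2} precisely when $s_1=s_2$, which is the case used for the Besov assertion and is also the case the paper's Remark after Corollary \ref{cotaSobolev} singles out. For $s_1\neq s_2$ the conjugation proof gives the $\theta'$-range, and you should simply state that rather than try to compare it with the $\theta$-range. (The paper's own proof of Corollary \ref{cotaSobolev} silently elides this same point, so you are in good company, but your explicit justification ``$\theta'\leq\theta$'' is not a valid repair.) The Besov interpolation step and the global version are fine.
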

\begin{thm}
    Let $(M,g, \nabla)$ be a complete Riemannian manifold of bounded geometry. Let $\theta\in\bbR$ and $a\in S_{\rho, \delta}^{-\theta}(\nabla)$ with $0 \leq \delta<\rho\leq 1$. Suppose at least one of the following conditions is fulfilled:
\begin{enumerate}
    \item $\rho>\frac{1}{2}$;
    \item the connection $\nabla$ is symmetric and $\rho>\frac{1}{3}$;
    \item the connection $\nabla$ is flat.
\end{enumerate}
Then we have the following statements.
 \begin{itemize}
        \item [a)]  If $1<p\leq2\leq q<\infty$, then $a(X, D)$ is bounded from $L^p_{comp}(M, \Omega^\kappa)$ to $L^q_{loc}(M, \Omega^\kappa)$ if 
    \begin{equation*}\label{LPLQ1}
    n\left(\frac{1}{p}-\frac{1}{q}\right)\leq -\theta; 
\end{equation*} 
\item [b)] if $1<p\leq q \leq 2$ and 
        \begin{equation*}
        \label{condLpLqUltima}
           n\left(\frac{1}{p}-\frac{1}{q} + (1-\rho)\left(\frac{1}{q}-\frac{1}{2}\right)\right)\leq -\theta,
        \end{equation*}
        then $a(X, D)$ is bounded from $L^p_{comp}(M, \Omega^\kappa)$ to $L^q_{loc}(M, \Omega^\kappa)$;
         \item[c)] if $2\leq p\leq q <\infty $ and 
        \begin{equation*}
           n\left(\frac{1}{p}-\frac{1}{q} + (1-\rho)\left(\frac{1}{2}-\frac{1}{p}\right)\right)\leq -\theta, 
        \end{equation*}
        then $a(X, D)$ is bounded from $L^p_{comp}(M, \Omega^\kappa)$ to $L^q_{loc}(M, \Omega^\kappa)$. 
    \end{itemize}
    Moreover, if $a\in S_{\rho, \delta}^{-\theta}(\nabla)_{glo}$, these previous boundedness results are global. 
\end{thm}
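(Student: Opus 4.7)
The plan is to reduce each part of the statement to Theorem~\ref{main2} (or directly to Theorem~\ref{main}) combined with the Sobolev embedding theorem for Bessel potential spaces on complete Riemannian manifolds of bounded geometry. The latter is available from Triebel's theory, and transfers to $\kappa$-densities via the triviality of the $\kappa$-density line bundle, so that the scalar statements on $M$ apply directly in the sections setting.

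For part (a), where $1 < p \leq 2 \leq q < \infty$, I would factor through the natural pivot $L^2$. The Sobolev embeddings $L^p_{comp}(M,\Omega^\kappa) \hookrightarrow L^2_{-s_1, loc}(M,\Omega^\kappa)$ with $s_1 = n(1/p - 1/2)$ and $L^2_{s_2, comp}(M,\Omega^\kappa) \hookrightarrow L^q_{loc}(M,\Omega^\kappa)$ with $s_2 = n(1/2 - 1/q)$ reduce the task to showing that $a(X,D) : L^2_{-s_1, comp} \to L^2_{s_2, loc}$. This is a direct application of Theorem~\ref{main2} with $p = 2$: the Fefferman-type condition $|1/p - 1/2| \leq \theta/(n(1-\rho))$ is automatic at $p = 2$, and the order-shift condition $-\theta + s_1 + s_2 \leq 0$ becomes precisely $n(1/p - 1/q) \leq \theta$, matching the stated condition in (a).

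For parts (b) and (c), the strategy is to compose $a(X,D)$ with a Bessel potential $\Lambda^s := (1-\Delta)^{s/2}$ so as to transfer part of the smoothing of $a$ between Sobolev regularity and $L^p$-range, and then apply Theorem~\ref{main} to the resulting reduced-order operator. For (c), with $2 \leq p \leq q < \infty$, set $s = n(1/p - 1/q)$; by the composition results in Safarov's calculus, $\Lambda^s \circ a(X,D) \in S_{\rho, \delta}^{-(\theta - s)}(\nabla)$, and Theorem~\ref{main} applied to this operator yields $L^p$-boundedness precisely when $|1/p - 1/2| \leq (\theta - s)/(n(1-\rho))$, which rearranges to the stated condition $\theta \geq n(1/p - 1/q) + n(1-\rho)(1/2 - 1/p)$. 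Unravelling $\Lambda^s$ then gives $a(X,D): L^p_{comp} \to L^p_{s, loc}$, and the Sobolev embedding $L^p_{s, loc} \hookrightarrow L^q_{loc}$ closes the argument. Case (b), with $1 < p \leq q \leq 2$, is dual in spirit: one first embeds $L^p_{comp} \hookrightarrow L^q_{-s, loc}$ with $s = n(1/p - 1/q)$ and reduces to the $L^q$-boundedness of $a(X,D) \circ \Lambda^s \in S_{\rho, \delta}^{-(\theta - s)}(\nabla)$, which again follows from Theorem~\ref{main}.

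The main obstacle will be verifying that composition with $\Lambda^s$ preserves Safarov's class $S_{\rho, \delta}^{\bullet}(\nabla)$ with each of the three alternative geometric hypotheses (symmetry, flatness, or $\rho > 1/2$) intact; this should follow from the composition formulas developed in Section~\ref{prelimi} and from Safarov's foundational work. Two secondary technicalities are the transport of Sobolev embedding and Bessel-potential theory to the $\kappa$-density setting, which is handled by the triviality of the bundle, and the upgrade of each comp-to-loc statement to its global counterpart when $a \in S_{\rho, \delta}^{-\theta}(\nabla)_{glo}$, which is achieved by removing the cutoffs and invoking the global variants of Theorem~\ref{main2} together with the global Sobolev embedding theorem.
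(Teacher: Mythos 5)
Your route is essentially the paper's: pre- and post-compose $a(X,D)$ with Bessel potentials $\mathfrak{B}^{\pm s}$ to shift orders, use the Sobolev embedding of Theorem \ref{sobolev} to move between Lebesgue levels, and invoke the already-established $L^2$-boundedness (Theorem \ref{L2bound}) for part (a), respectively the Fefferman-type $L^p$ theorem (Theorem \ref{lpFinal}) for parts (b) and (c). For (a) the paper chains $L^p\to L^2\to L^2\to L^q$ through the factorisation $A = \mathfrak{B}^{\theta_2}\bigl(\mathfrak{B}^{-\theta_2}A\mathfrak{B}^{-\theta_1}\bigr)\mathfrak{B}^{\theta_1}$; your version, running through Theorem \ref{main2} at $p=2$ sandwiched between Sobolev embeddings, repackages the same bookkeeping. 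For (b) and (c) both the paper and your plan compose $a(X,D)$ with a single Bessel potential and feed the composite into the Fefferman $L^q$ (resp.\ $L^p$) result. So the approach coincides with the paper's.

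There is, however, a sign discrepancy that you pass over but should have flagged. Carrying your own computations through, you arrive at the condition $n(1/p-1/q)\leq \theta$ for (a), and analogously $\theta\geq n(1/p-1/q)+n(1-\rho)(1/q-1/2)$ and $\theta\geq n(1/p-1/q)+n(1-\rho)(1/2-1/p)$ for (b) and (c); you then assert that these ``match the stated condition.'' They do not: the theorem as printed has $\leq -\theta$ in each case, i.e.\ the opposite sign of $\theta$. Your derived version is the internally consistent one. Since $a\in S_{\rho,\delta}^{-\theta}(\nabla)$ has order $-\theta$, the chain $L^p\hookrightarrow L^2_{-s_1}\xrightarrow{a(X,D)} L^2_{\theta-s_1}\hookrightarrow L^q$ requires $\theta\geq s_1+s_2=n(1/p-1/q)>0$ when $p<q$, so $\theta$ must be positive; under the printed inequality one is forced to $\theta\leq 0$, in which case e.g.\ $a(x,\xi)=\langle\xi\rangle_x^{-\theta}\in S_{\rho,\delta}^{-\theta}(\nabla)$ gives $a(X,D)=\mathfrak{B}^{-\theta}$, which is certainly not bounded $L^p_{comp}\to L^q_{loc}$ for $p<q$. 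The same sign slip propagates through the paper's own argument (the orders claimed for $\mathfrak{Z}=\mathfrak{B}^{-\theta_2}A\mathfrak{B}^{-\theta_1}$ in Corollary \ref{corlplp} and for $A\mathfrak{B}^{-\theta_0}$ in Corollary \ref{corlplq2} do not agree with $\Psi^{m_1}\circ\Psi^{m_2}\subset\Psi^{m_1+m_2}$). So your arithmetic is right; the gap in your write-up is claiming agreement with a condition you did not in fact reproduce, rather than pointing out that the printed $-\theta$ must be a slip for $\theta$.
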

\begin{rem}
    Notice that in none of the previous Theorems we are requiring the connection to be metric, even though we are working with a Riemannian manifold. 
\end{rem}
For discussions of our main results see Remarks \ref{rem1}, \ref{rem2} and \ref{rem3}.

Finally, let us comment on Conditions (1)-(3) of these theorems. Condition $(1)$ is recovering the classical results on local classes; remember that the restriction $\rho>1/2$ always appears as the intersection of $0\leq \delta<\rho\leq 1$ and $1-\rho \leq \delta$, since in this case the Safarov classes are independent of the choice of the connection $\nabla$ (resembling invariance under change of coordinates). Therefore, in this situation one can always pick connections that simplify the problems without worrying too much. Condition $(2)$ is extending the classical results since now we can take smaller $\rho>1/3$, allowing us to consider new types of operators, but this happens in some new classes since in this case the Safarov classes do depend on $\nabla$. Nevertheless, asking $\nabla$ to be symmetric (or torsion-free) is not a restriction at all on the geometry of $M$ because given $\nabla$ with Christoffel symbols $\Gamma_{k j}^{i}$, the connection given by the new Christoffel symbols $\tilde{\Gamma}_{k j}^{i} = \frac{1}{2}(\Gamma_{k j}^{i}+\Gamma_{jk}^{i})$ is always symmetric and it has the same geodesics (see e.g. \cite{kob}). Moreover, if one is interested on having metricity as well, for instance to write Christoffel symbols in terms of the Riemannian metric, one has always the Levi-Civita connection. Condition (3) is the one extending the most the types of operators we can consider because we allow $\rho>0$, but in this case we do have the major restriction of flatness; which together with the bounded geometry will reduce the manifolds to which our results are applicable. Regarding this conditions, if we have a flat metric connection on a compact Riemannian manifold, it is well-known that there are only finitely many of them in each dimension due to Bieberbach Theorems (see e.g. \cite{bib}) and basically all of them are tori, products among themselves and an already flat one quotient by free group actions. Interestingly, if one forgets about metricity i.e. we consider only a flat connection on a compact Riemannian manifold it is known that there exist infinitely many of them in every dimension \cite{aus}. Up to our knowledge, there is still no complete classification of this class of manifolds. 

To avoid confusion, we clarify that over the whole article we implicitly use standard Einstein notation. 

\section{Preliminaries}
\label{prelimi}
In this section we recall the Safarov pseudo-differential calculus and its main results; definitions of the BMO, Hardy, Besov and Sobolev spaces in complete Riemannian manifolds of bounded geometry together with the relevant properties we require for the purposes of this work. 
\subsection{Safarov calculus}
In this subsection we recall briefly the global pseudo-differen\-tial calculus on manifolds developed by Safarov in \cite{Safarov}, of course we refer to his paper for more details. We also prove some extra results we will need in Section \ref{S3}. 

Let $M$ be a $n$-dimensional smooth manifold. The linear operators we are going to consider will be acting on the space of $\kappa$-densities (sections of a certain line bundle) for $\kappa\in\bbR$, so we quickly define them and we refer to \cite{berl} for more details. The $\kappa$-density bundle is defined as the associated bundle of the following representation of $\operatorname{GL}(n)$: 
\begin{align*}
    \operatorname{GL}(n) &\xrightarrow{\rho} \operatorname{GL}(1)\\
    A & \mapsto |\operatorname{det}A|^{-\kappa},
\end{align*}
i.e. the bundle $\Omega^\kappa$ is defined as 
\begin{figure}[H]
\centering
\renewcommand\figurename{Diagram}
\begin{tikzpicture}[descr/.style={fill=white,inner sep=1.5pt}]
        \matrix (a) [
            matrix of math nodes,
            row sep=2.8em,
            column sep=2.5em,
            text height=1.5ex, text depth=0.25ex
        ]
        {
         \Omega^\kappa := \operatorname{Fr}(M) \times_{\rho} \bbR  \\
          M\\ 
        };
        \path[overlay,->, font=\scriptsize,>=latex]
        (a-1-1) edge node[midway,above] {$ $} (a-2-1);
\end{tikzpicture}    
\end{figure} 
\noindent where $\operatorname{Fr}(M)$ is the frame bundle of $M$, and as usual we denote the space of smooth sections as $\Gamma(\Omega^\kappa)$. Locally this means that we are dealing with objects (sections) that under a change of coordinates behave as follows
\[
u(y) = |\operatorname{det} (\partial x^i/\partial y^j)|^\kappa u(x(y)),
\]
where $\{x^k\}, \{y^k\}$ are two coordinates systems on $M$. For example, if $\kappa=0$, then $\Gamma(\Omega^0) = C^\infty(M)$ is the standard space of smooth functions on $M$; or if $M$ is oriented and $\kappa=1$, then $\Gamma(\Omega^1) = \Gamma(\bigwedge^n T^*M)$ is the space of $n$-forms on $M$. The density bundle ($\kappa=1$) is trivializable, so all of them are, therefore we fix a section $|dx|$ and we can use it to identify all these line bundles 
\begin{figure}[H]
\centering
\renewcommand\figurename{Diagram}
\begin{tikzpicture}[descr/.style={fill=white,inner sep=1.5pt}]
        \matrix (a) [
            matrix of math nodes,
            row sep=2.8em,
            column sep=2.5em,
            text height=1.5ex, text depth=0.25ex
        ]
        {
         \Omega & & \Omega^\kappa& \Omega^s  \\
          M  & & M & M\\
        };

        \path[overlay,->, font=\scriptsize,>=latex]
        (a-1-1) edge node[midway,above] {$ $} (a-2-1)
        (a-1-3) edge node[midway,above] {$ $} (a-2-3)
        (a-1-4) edge node[midway,above] {$ $} (a-2-4)
        (a-2-1) edge[out=120,in=240,black] node[xshift=-2ex] {$|dx|$} (a-1-1)
        (a-2-3) edge[out=120,in=240,black] node[xshift=-1ex] {$u$} (a-1-3)
        (a-2-4) edge[out=60,in=300,black] node[xshift=4ex] {$u|dx|^{s-\kappa}$} (a-1-4)
        (a-1-3) edge node[midway,above] {$\ast |dx|$} (a-1-4);
\end{tikzpicture}    
\end{figure}
via multiplication by $|dx|$. 
\begin{rem}
    Locally one may think of elements $u\in \Gamma(\Omega^\kappa)$ as 
\[
u(x) = f(x^1, \ldots, x^n) |dx^1\wedge \cdots \wedge dx^n|^\kappa.
\]
Moreover, there is a natural pairing between $\Omega^\kappa$ and $\Omega^{1-\kappa}$ given by integration 
\[
\Omega^\kappa\times \Omega^{1-\kappa} \xrightarrow{\langle \cdot, \cdot\rangle=\int}\mathbb{R}
\]
since our fixed section $|dx|$ is defining a measure on $M$ even if it is not oriented. 
\end{rem}
Now, the main idea to define the symbol classes over the cotangent bundle $T^*M$ is to choose a linear connection to split this bundle in two well defined spaces and to have a differential object to measure the horizontal growth of the symbols (there is always a natural way to do it on the vertical part). Using this construction we will have full symbols defined on $T^*M$ rather than just symbols with a well defined principal part. 
Remember that a connection $\nabla$, by definition, is a splitting of the following exact sequence 
\begin{figure}[H]
\centering
\renewcommand\figurename{Diagram}
\begin{tikzpicture}[descr/.style={fill=white,inner sep=1.5pt}]
        \matrix (a) [
            matrix of math nodes,
            row sep=2.8em,
            column sep=2.5em,
            text height=1.5ex, text depth=0.25ex
        ]
        {
         0 & VTM & TTM & HTM & 0,\\
        };

        \path[overlay,->, font=\scriptsize,>={Latex[length=2mm, width=2mm]}]
        (a-1-1) edge node[midway,above] {$ $} (a-1-2)
        (a-1-2) edge node[midway,left] {$ $} (a-1-3)
        (a-1-3) edge node[midway,right] {$ $} (a-1-4)
        (a-1-4) edge node[midway,right] {$ $} (a-1-5);
        \draw[->,loosely dashed,-{Latex[length=2mm, width=2mm]}] (a-1-4) edge[out=120,in=50,black] node[yshift= 1.2ex] {$\nabla$} (a-1-3);
\end{tikzpicture}   
\end{figure}
\noindent and abusing notation we are going to denote by $\nabla$ also the dual connection induced on $T^*M$. The existence of this splitting simply means that we choose a horizontal space $HT^*M$ such that $TT^*M\cong VT^*M\oplus HT^*M$, where $VT^*M$ is the natural vertical part which, if we denote points on $T^*M$ as $(y,\eta)$, is generated by vector fields $\{\partial_{\eta_j}\}\subset \mathfrak{X}(T^*M)$. On the other hand, the connection $\nabla$ will give us the generators of the horizontal part by considering horizontal lifts of vector fields $\partial_{y^k}\in \mathfrak{X}(M)$. Let $\Gamma_{k j}^{i}$ be the Christoffel symbols associated to $\nabla$, given a vector field $v=\sum v^{k}(y) \partial_{y^{k}}$ on $M$, its horizontal lift is defined as 
$$
\nabla_{v}=\sum_{k} v^{k}(y) \partial_{y^{k}}+\sum_{i, j, k} \Gamma_{k j}^{i}(y) v^{k}(y) \eta_{i} \partial_{\eta_{j}}. 
$$
In particular, if $v=\partial_{y^{k}}$ we will denote its horizontal lift by $\nabla_k$, these are the derivatives that will appear in the definition of the symbols classes. To define the phases and the kernels of the pseudo-differential operators we will need some other geometric objects related to $\nabla$, so we quickly fix the notation:  
\begin{itemize}
\item We usually use normal coordinates systems so we use the abbreviation n.c.s.
\item We denote $T=\{T_{jk}^i\}$ the torsion tensor, $R=\{R_{jkl}^i\}$ the curvature tensor and $\{R_{kl}\}=\{\sum_i R_{kil}^i\}$ the Ricci tensor. If $T\equiv 0$ we say that $\nabla$ is symmetric, and if in addition $R\equiv 0$ we say that $\nabla$ is flat.
\item Given a neighborhood $U_x$ of $x$, we denote by $\gamma_{y,x}(t)$ the shortest geodesic joining $x$ and $y\in U_x$. We will use the notation $z_t= \gamma_{y,x}(t)$.
\item We denote $\Phi_{y,x}:T_x^*M\to T_y^*M$ the parallel transport along $\gamma_{y,x}(t)$ and we define $\Upsilon_{y,x} := \Upsilon_{x}(y)=|\operatorname{det} \Phi_{y,x}|$, the latter function is a $1$-density in $x$ and a $(-1)$-density in $y$. 
\item The following useful formulas, in normal coordinates, were proved by Safarov in \cite{Safarov}: 
\begin{align}
     \partial_{y^k}(\Phi_{y,x})_j^i\big|_{y=x}&=\Gamma_{kj}^i(y)\big|_{y=x}=\frac{1}{2}T_{kj}^i(x),\label{trasp}\\
     \partial_{y^k}(\Upsilon_{x}(y))\big|_{y=x}&=\sum_j\Gamma_{kj}^j(y)\big|_{y=x}=\frac{1}{2}\sum_jT_{kj}^j(x).\label{determiTrans}
\end{align}
    \end{itemize}
\begin{rem}
   One has to be careful when taking horizontal and vertical derivatives because they will produce new tensors (since one may think of coordinates $\eta$ of being $1$-forms); for example, if $a\in C^\infty(T^*M)$ then $\{\partial_\eta^\alpha a\}_{|\alpha|=p}$ would be a $(p,0)$-tensor and $\{\nabla_{i_{1}} \ldots \nabla_{i_{q}} a\}$ would be a $(0,q)$-tensor. Actually, for the horizontal ones one can define another derivative using parallel transport 
   \[
   \nabla_x^\alpha a(x,\eta) = \frac{d^\alpha}{dy^\alpha} a(y, \Phi_{y,x}\eta)\bigg|_{y=x},
   \]
   which will coincide (\cite[Corollary 2.5]{Safarov}) and often will appear in formulas below. 
\end{rem}

\begin{defn}\label{defSym}
     Let $0\leq \delta<\rho \leq 1$, the space $S_{\rho, \delta}^m(\nabla)$ denotes the class of functions $a\in C^\infty(T^*M)$ such that in any coordinates $y$, for all $\alpha$ and $i_1, \ldots, i_q$, 
$$
\left|\partial_{\eta}^{\alpha} \nabla_{i_{1}} \ldots \nabla_{i_{q}} a(y, \eta)\right| \leq \mathrm{const}_{K, \alpha, i_{1}, \ldots, i_{q}}\langle\eta\rangle_{y}^{m+\delta q-\rho|\alpha|},
$$
where $y$ runs over a compact set $K\subset M$, $\langle\eta\rangle_{y}:= (1 + w^2(y, \eta))^{1/2}$ where $w\in C^\infty(T^*M\setminus 0)$ is a positive function homogeneous in $\eta$ of degree 1. Also amplitudes are defined as satisfying 
$$
\left|\partial_{z}^{\beta} \partial_{\eta}^{\alpha} \nabla_{i_{1}} \ldots \nabla_{i_{q}} a(z ; y, \eta)\right| \leq \mathrm{const}_{K, \alpha, \beta, i_{1}, \ldots, i_{q}}\langle\eta\rangle_{y}^{m+\delta|\beta|+\delta q-\rho|\alpha|}, \text{ for all } y,z\in K.
$$
\end{defn}
\begin{rem}
    According to Safarov this symbols classes are independent of the function $w$, but for our purposes we will work with a Riemannian manifold $(M,g)$, so will stick to $w(y,\eta) = |\eta|_y = \sqrt{g^{ab}(y)\eta_a\eta_b}$, where $g^{ab}$ are the dual coefficients of the metric $g$. 
\end{rem}
Of course, symbols in $S_{\rho, \delta}^m(\nabla)$ satisfy the classical properties concerning multiplication, addition, and differentiation with respect of each of the variables.
\begin{prop}
We have the following properties.
     \begin{itemize}
            \item If $a\in S_{\rho, \delta}^{m_1}(\nabla)$ and $b\in S_{\rho, \delta}^{m_2}(\nabla)$, set $m=\operatorname{max}(m_1, m_2)$, then 
            $$ab\in S_{\rho, \delta}^{m_1+m_2}(\nabla) \text{ and } a+b\in S_{\rho, \delta}^{m}(\nabla).$$
            \item Moreover,
            $\partial_\eta^\alpha a \in S_{\rho, \delta}^{m-\rho|\alpha|}(\nabla)$, $\nabla_{v_1}\cdots\nabla_{v_q}a\in S_{\rho, \delta}^{m+\delta q}(\nabla)$
            for all $a\in S_{\rho, \delta}^{m}(\nabla)$ and any $v_1, \ldots, v_q\in \mathfrak{X}(M).$
            \end{itemize}
\end{prop}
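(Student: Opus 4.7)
The plan is to verify each bullet directly from Definition \ref{defSym}, using two ingredients: both $\partial_{\eta_j}$ and the horizontal lift $\nabla_k = \partial_{y^k} + \Gamma_{kj}^i\eta_i\partial_{\eta_j}$ are vector fields on $T^*M$, hence derivations on $C^\infty(T^*M)$ subject to the Leibniz rule; and the bounded-geometry hypothesis on $(M,g,\nabla)$ guarantees that the Christoffel symbols together with all of their covariant derivatives are uniformly bounded in normal coordinates.

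The additive assertion is elementary: since $\langle\eta\rangle_y\geq 1$, one has $\langle\eta\rangle_y^{m_1+\delta q-\rho|\alpha|}+\langle\eta\rangle_y^{m_2+\delta q-\rho|\alpha|}\leq 2\langle\eta\rangle_y^{m+\delta q-\rho|\alpha|}$ with $m=\max(m_1,m_2)$, which combined with the triangle inequality gives the required bound on $\partial_\eta^\alpha\nabla_{i_1}\cdots\nabla_{i_q}(a+b)$. For the product, iterating the Leibniz rule on each factor expands $\partial_\eta^\alpha\nabla_{i_1}\cdots\nabla_{i_q}(ab)$ as a finite linear combination of terms $(\partial_\eta^{\beta'}\nabla_{J'}a)(\partial_\eta^{\beta''}\nabla_{J''}b)$, where $\beta'+\beta''=\alpha$ and $J',J''$ are complementary order-preserving subsequences of $(i_1,\ldots,i_q)$. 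Applying the symbol estimates to the two factors and multiplying produces the exponent $m_1+m_2+\delta q-\rho|\alpha|$ for every summand. The covariant-derivative statement $\nabla_{v_1}\cdots\nabla_{v_q}a\in S_{\rho,\delta}^{m+\delta q}(\nabla)$ follows similarly: writing $v_j=v_j^{k_j}(y)\partial_{y^{k_j}}$ and applying the Leibniz rule to the scalar factors $v_j^{k_j}$, the composition reduces to a finite sum of expressions $f(y)\,\nabla_{k_1}\cdots\nabla_{k_r}a$ with $f$ smooth and $r\leq q$. Applying $\partial_\eta^\beta\nabla_{j_1}\cdots\nabla_{j_p}$ on the outside and invoking the defining estimate of $a$ with the enlarged index tuple $(j_1,\ldots,j_p,k_1,\ldots,k_r)$ yields the desired bound $\langle\eta\rangle_y^{m+\delta(p+q)-\rho|\beta|}$.

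The one item that needs care is the vertical-derivative statement $\partial_\eta^\alpha a\in S_{\rho,\delta}^{m-\rho|\alpha|}(\nabla)$, because Definition \ref{defSym} applied to $\partial_\eta^\alpha a$ forces the covariant derivatives to sit to the right of the $\partial_\eta^\alpha$, whereas the defining estimate for $a$ has them on the left. The key computation is the commutator identity
\[
[\partial_{\eta_\ell},\nabla_k] \;=\; \Gamma_{kj}^\ell\,\partial_{\eta_j},
\]
obtained by a one-line expansion from the explicit formula for $\nabla_k$. Pushing one vertical derivative past one horizontal derivative therefore preserves the total vertical order but removes one covariant derivative, at the cost of a bounded Christoffel factor. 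An induction on $q$ based on this identity rewrites $\nabla_{i_1}\cdots\nabla_{i_q}\partial_\eta^\alpha$ as $\partial_\eta^\alpha\nabla_{i_1}\cdots\nabla_{i_q}$ plus a finite sum of terms of the form $P(y)\,\partial_\eta^{\alpha'}\nabla_{I'}$, where $|\alpha'|=|\alpha|$, $|I'|<q$, and $P(y)$ is a polynomial in the Christoffel symbols and their covariant derivatives, hence uniformly bounded by the bounded-geometry assumption on $\nabla$. Applying $\partial_\eta^\beta$ on the outside, which passes through $P(y)$ unchanged, and then the defining estimate for $a$ term by term gives bounds of the type $\langle\eta\rangle_y^{m+\delta|I'|-\rho(|\alpha|+|\beta|)}$, which, using $|I'|\leq q$, $\delta\geq 0$ and $\langle\eta\rangle_y\geq 1$, are dominated by $\langle\eta\rangle_y^{(m-\rho|\alpha|)+\delta q-\rho|\beta|}$, closing the proof. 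The only genuine effort lies in the bookkeeping of this commutator expansion; everything else is a direct appeal to Definition \ref{defSym}.
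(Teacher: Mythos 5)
The paper states this proposition without proof, so there is no argument to compare against; your proposal supplies a correct one. The computation $[\partial_{\eta_\ell},\nabla_k]=\Gamma_{kj}^\ell\,\partial_{\eta_j}$ is right (it follows immediately from $\nabla_k=\partial_{y^k}+\Gamma_{kj}^i\eta_i\partial_{\eta_j}$, since only the term where $\partial_{\eta_\ell}$ hits $\eta_i$ survives), and you have correctly identified this as the one nonroutine ingredient: because Definition~\ref{defSym} places $\partial_\eta^\alpha$ outermost, showing $\partial_\eta^\alpha a\in S_{\rho,\delta}^{m-\rho|\alpha|}(\nabla)$ forces you to commute $\partial_\eta^\alpha$ leftward through the horizontal derivatives, and the induction bookkeeping you describe ($|\alpha'|=|\alpha|$, $|I'|<q$, coefficients polynomial in the Christoffel symbols and their $y$-partials) is exactly what the iterated commutator produces. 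The sum, product, and covariant-derivative items are indeed direct Leibniz computations.

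Two minor remarks on presentation. First, the appeal to bounded geometry is unnecessary here (and strictly speaking not available: the proposition is stated in the general smooth-manifold setting of Section~\ref{prelimi}, before bounded geometry is imposed). What you actually need is that the constants in Definition~\ref{defSym} are allowed to depend on the compact set $K$, so the Christoffel symbols, all their partial derivatives, and the coefficient functions $v_j^{k_j}(y)$ are automatically bounded on $K$ by smoothness; bounded geometry would only be needed to make these bounds uniform in $K$, which the proposition does not claim. Second, in the covariant-derivative item the exponent produced by the defining estimate on a term with $r\le q$ of the $\nabla_{k_j}$'s surviving is $m+\delta(p+r)-\rho|\beta|$, not $m+\delta(p+q)-\rho|\beta|$; the two are compared via $\delta\ge 0$ and $\langle\eta\rangle_y\ge 1$, the same mechanism you use explicitly for $|I'|<q$ in the vertical-derivative item. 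Worth making explicit since it is the second place where the proof quietly uses $\delta\ge 0$.
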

As usual, for any $l\in\bbN$ we define the seminorms for a compact set $K\subset M$
$$\|a\|_{l; S_{\rho, \delta}^{m}}:= \sup_{|\alpha|+ q \leq l,\, (y,\eta) \in K\times T_y^*M} \frac{\left|\partial_{\eta}^{\alpha} \nabla_{i_{1}} \ldots \nabla_{i_{q}} a(y, \eta)\right|}{\langle\eta\rangle_{y}^{m+\delta q-\rho|\alpha|}},$$
which regularly will appear in the estimation process of Section \ref{ultima}. 

\begin{ex}
\label{cotaDerivadademetrica}
    To give an example on how to manipulate the seminorms, which actually will be used in Section \ref{ultima}, let us suppose for a moment that $(M,g,\nabla)$ is a complete Riemannian manifold of bounded geometry. We investigate the seminorm of the powers of the weight $\langle\eta\rangle_{y}$, i.e. let $\gamma\in\bbC$ and $l\in\bbN$, and we want to estimate $\|\langle\eta\rangle_{y}^\gamma\|_{l; S_{\rho, \delta}^{m}}$. On one hand, using symmetry of the metric tensor
    \begin{align*}
        \nabla_k\langle\eta\rangle_{y}^\gamma &= \nabla_k (1 + g^{ab}(y)\eta_a\eta_b)^{\gamma/2}\\
        &= \partial_k(1 + g^{ab}(y)\eta_a\eta_b)^{\gamma/2} + \Gamma_{kj}^i\eta_i\partial_{\eta_j}(1 + g^{ab}(y)\eta_a\eta_b)^{\gamma/2}\\
        &= \frac{\gamma}{2}(1 + g^{ab}(y)\eta_a\eta_b)^{(\gamma-2)/2}(\partial_k g^{ab})\eta_a\eta_b + \gamma \Gamma_{kj}^i\eta_i(1 + g^{kb}(y)\eta_k\eta_b)^{(\gamma-2)/2}g^{kb}\eta_b\\
        &\leq \gamma C (1 + g^{ab}(y)\eta_a\eta_b)^{\gamma/2} = C\gamma \langle\eta\rangle_{y}^\gamma. 
    \end{align*}
    So we see that $\nabla_{i_{1}} \ldots \nabla_{i_{q}} \langle\eta\rangle_{y}^\gamma$ would behave as a polynomial in $\eta$ of degree $\gamma$ having coefficients depending on partial derivatives of the metric coefficients and Christoffel symbols, which are bounded since $(M,g,\nabla)$ is of bounded geometry, and powers of $\gamma$ depending on $q$. Therefore 
    \[
    |\nabla_{i_{1}} \ldots \nabla_{i_{q}} \langle\eta\rangle_{y}^\gamma| \leq |p_{1,q}(\gamma)| \langle\eta\rangle_{y}^\gamma,
    \]
    where $p_{1,q}(\gamma)$ is a polynomial of degree $q$. On the other hand, 
    \begin{align*}
        \partial_{\eta_k} \langle\eta\rangle_{y}^\gamma &= \partial_{\eta_k} (1 + g^{ab}(y)\eta_a\eta_b)^{\gamma/2}\\
        &= \gamma (1 + g^{kb}(y)\eta_k\eta_b)^{(\gamma-2)/2}g^{kb}\eta_b\\
        &\leq \gamma C \langle\eta\rangle_{y}^{\gamma-1}.
    \end{align*}
    Therefore, 
    \[
    |\partial_{\eta}^{\alpha} \langle\eta\rangle_{y}^\gamma| \leq |p_{2,\alpha}(\gamma)| \langle\eta\rangle_{y}^{\gamma-|\alpha|},
    \]
    where $p_{2,\alpha}(\gamma)$ is a polynomial of degree $|\alpha|$. Thus, combining the two previous calculations we find that for $|\alpha|+ q \leq l$, we have
    \[
    \left|\partial_{\eta}^{\alpha} \nabla_{i_{1}} \ldots \nabla_{i_{q}} \langle\eta\rangle_{y}^\gamma\right| \leq |p_l(\gamma)| \langle\eta\rangle_{y}^{\gamma-|\alpha|},
    \]
    where $p_l(\gamma)$ is a polynomial of order $l$. Finally, we conclude that
    \[
    \|\langle\eta\rangle_{y}^\gamma\|_{l; S_{\rho, \delta}^{m}}\leq \sup_{|\alpha|+ q \leq l,\, (y,\eta) \in K\times T_y^*M} |p_l(\gamma)|\frac{1}{\langle\eta\rangle_{y}^{(m-\gamma+|\alpha|)+\delta q-\rho|\alpha|}}.
    \]
\end{ex}
Before giving the definition of a pseudo-differential operator, we still need to define what would be the phase function of these operators. 
\begin{defn}
    Let $V$ be a sufficiently small neighborhood of the diagonal $\Delta$ in $M\times M$. We introduce the phase functions 
$$\varphi_\tau(x,\zeta,y)= -\langle \dot{\gamma}_{y,x}(\tau), \zeta\rangle, \text{ where }(x,y)\in V,\quad \tau\in [0,1],\quad  \zeta\in T^*_{z_\tau}M.$$
\end{defn}
\begin{defn}
    Let $A: \Gamma_c(\Omega^\kappa)\to \Gamma(\Omega^\kappa)$ be a linear operator with Schwartz kernel $\mathscr{A}(x, y)$, i.e, $\langle Au, v\rangle = \langle\mathscr{A} , u\otimes v\rangle$. We say that $A$ is pseudo-differential if
      \begin{enumerate}
          \item $\mathscr{A}(x, y)$ is smooth in $(M\times M)\setminus \Delta$.
          \item On a neighborhood $V$ of $\Delta$ the Schwartz kernel is represented by an oscillatory integral of the form
          \begin{align*}
              \mathscr{A}(x, y)=&\frac{1}{(2\pi)^n}p_{\kappa, \tau} \int_{T^*_{z_\tau}M} e^{i \varphi_{\tau}(x, \zeta, y)} a\left(z_{\tau}, \zeta\right)\,d \zeta,\text { for }(x, y) \in V,
          \end{align*}
          where $a\in S_{\rho, \delta}^m(\nabla)$ and $p_{\kappa, \tau} = p_{\kappa, \tau}(x,y) = \Upsilon_{y, z_\tau}^{1-\kappa}\Upsilon_{z_\tau, x}^{-\kappa}$.
      \end{enumerate}
We denote $\Psi_{\rho, \delta}^m\left(\Omega^\kappa, \nabla, \tau\right)$ these classes of pseudo-differential operators. Moreover, we say that a pseudo-differential operator $A$ is properly supported if both projections $\pi_1,\pi_2: \operatorname{supp}\mathscr{A}\to M$ are proper maps. 
\end{defn}
\begin{rem}\label{kernels}
     Let $\chi$ be a smooth function being equal to $1$ in $V$ and vanishing outside. We define the smooth function 
     \begin{equation}\label{kernelDef}
         K(x,y):= (1-\chi(x,y)) \mathscr{A}(x, y) \in C^\infty(M\times M),
     \end{equation}
     so one can write a pseudo-differential operator $A$ as follows:
     \begin{align*}
         Au(x) &= A_{loc}u(x) + A_{glo}u(x)\\
         &=\int_M \mathscr{A}(x, y) u(y) \chi(x,y) |dy| + \int_M K(x,y)u(y) |dy|\\
         &=\frac{1}{(2\pi)^n}\int_M \int_{T_{z_\tau}^*M}  p_{\kappa, \tau}e^{i \varphi_{\tau}(x, \zeta, y)} a\left(z_{\tau}, \zeta\right) \chi(x,y) u(y)d \zeta |dy|+ \int_M K(x,y)u(y) |dy|.
     \end{align*}
     Notice that our definition of pseudo-differential operator only allow us to write explicitly the local part of the operator, $A_{loc}$, so that estimations in Section \ref{ultima} would be local unless one imposes a decay condition on the global part $A_{glo}$ of the operator. Classically on $\bbR^n$ one has polynomial decay on the kernel for $x\neq y$ coming directly from the conditions on the symbol classes since in that case the conditions are global (see e.g. \cite[Theorem 2.3.1]{RuT}). In the setting of complete Riemannian manifolds of bounded geometry such properties would not be enough in general because due to Lemma \ref{volGrowth} one could have exponential growth of the balls and this behaviour needs to be compensated by the kernel. For this reason, in the next sections we are going to use the following assumption on the kernel $K$ away from the diagonal $\Delta$. 
\end{rem}
\begin{as}\label{assu}
    Let $(M,g)$ be a complete Riemannian manifold of bounded geometry of dimension $n$, and denote by $d$ the associated Riemannian distance. Let $C_0$, $\mu_0$, and $K_0$ be as in Lemma \ref{volGrowth}. We say that $a\in S_{\rho,\delta}^m(\nabla)_{glo}$ if the corresponding kernel $K$ of the pseudo-differential operator $a(X,D)$ in \eqref{kernelDef} away of the diagonal satisfies: 
    \begin{itemize}
        \item If $K_0>0$, then there exist $W>K_0$ such that 
        \[
        \left|K(x,y)\right|\leq C e^{-W d(x,y)}, \text{ for all } x,y\in M \text{ such that } d(x,y)\geq 1. 
        \]
        \item If $K_0=0$, then there exist $L>\mu_0+n$ such that 
        \[
        \left|K(x,y)\right|\leq C d(x,y)^{-L}, \text{ for all } x,y\in M \text{ such that } d(x,y)\geq 1.
        \]
    \end{itemize} 
\end{as}
 As we already mentioned, decay as in the second item is satisfied by all pseudo-differential operators belonging to H\"ormander classes. Notice that on a compact Riemannian manifold such properties are trivially satisfied since away from the diagonal the kernel is a smooth function. On the other hand, some operators satisfying the first item are discussed in detail by Cheeger, Gromov and Taylor in \cite{chee}. Specifically, they treated functions of the Laplacian $f(\sqrt{-L})$ where $f\in S_\rho^m(\bbR)$ (see definition of this class in Example \ref{ex1}) and $f$ is holomorphic on the strip 
    \[
    \Omega_W = \{z\in\bbC : |\operatorname{Im}z|<W\}, 
    \]
    where the width of the strip is precisely the constant appearing in the exponential decay of the kernels.
    
We move on, and now let us describe locally the kernel $\mathscr{A}(x, y)$ corresponding to the operator $A_{loc}$. For that purpose take a chart $U\times U\subset V$, and  let $\{x^k\}, \{y^k\}$ be the same coordinates. Then there exist a non-degenerate $n\times n$ matrix $\Psi_\tau(x,y)$ such that for all $(x,y)\in U\times U$
\[
\varphi_{\tau}(x, \zeta, y) = (x-y)\cdot \Psi_\tau \zeta,
\]
and changing coordinates $\tilde{\zeta} = \Psi_\tau \zeta$ we obtain 
\begin{equation}
\label{changeOfCoor}
 \mathscr{A}(x, y) = \frac{1}{(2\pi)^n}p_{\kappa, \tau} |\operatorname{det}\Psi_\tau|^{-1}\int_{T^*_{z_\tau}U} e^{i(x-y)\cdot\Psi_\tau^{-1}\tilde{\zeta}} a\left(z_{\tau}, \Psi_\tau^{-1}\tilde{\zeta}\right)\,d \tilde{\zeta},\text { for }(x, y) \in U\times U.    
\end{equation}
Now we state some of the results proved by Safarov \cite{Safarov} for this calculus that are going to be useful for us. 
\begin{prop}
\label{propTaoZero}
Let $0\leq \delta<\rho \leq 1$. For all $\tau, s \in[0,1]$ the classes $\Psi_{\rho, \delta}^m\left(\Omega^\kappa, \nabla, \tau\right)$ and $\Psi_{\rho, \delta}^m\left(\Omega^\kappa, \nabla, s\right)$ coincide, and
$$
\sigma_{A, s}(x, \xi) \sim \sum_\alpha \frac{1}{\alpha !}(\tau-s)^{|\alpha|} D_{\xi}^\alpha \nabla_x^\alpha \sigma_{A, \tau}(x, \xi) \quad \text { as }\langle\xi\rangle_x \rightarrow \infty.
$$
\end{prop}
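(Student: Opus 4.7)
The strategy is to take the local oscillatory integral representation of the Schwartz kernel of $A$ given by the $\tau$-quantization and rewrite it in the form of an $s$-quantization, then extract the asymptotic series by a standard amplitude-to-symbol reduction. Everything happens locally near the diagonal inside $V$.

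First, fix $(x,y)\in V$ and perform the change of integration variable $\zeta = \Phi_{z_\tau, z_s}\xi$, with $\xi\in T^*_{z_s}M$, inside the oscillatory integral defining $\mathscr{A}(x,y)$. Two things happen simultaneously. On the phase side, because the tangent vector of a geodesic is parallel along it, $\dot\gamma_{y,x}(\tau) = \Phi^{TM}_{z_\tau, z_s}\dot\gamma_{y,x}(s)$, and the duality between parallel transport on $T^*M$ and $TM$ gives $\varphi_\tau(x,\Phi_{z_\tau, z_s}\xi,y) = -\langle\dot\gamma_{y,x}(s),\xi\rangle = \varphi_s(x,\xi,y)$. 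On the density side, the Jacobian $\Upsilon_{z_\tau,z_s}$ combines with $p_{\kappa,\tau}$: using the composition law $\Phi_{y,z_\tau}\Phi_{z_\tau,z_s} = \Phi_{y,z_s}$ and its partner $\Phi_{z_\tau,z_s}\Phi_{z_s,x} = \Phi_{z_\tau,x}$, a short computation starting from the definition $p_{\kappa,\tau} = \Upsilon_{y,z_\tau}^{1-\kappa}\Upsilon_{z_\tau,x}^{-\kappa}$ yields $p_{\kappa,\tau}\Upsilon_{z_\tau, z_s} = p_{\kappa,s}$. Hence the same kernel is now represented with phase $\varphi_s$, density factor $p_{\kappa,s}$, and integrand $a(z_\tau, \Phi_{z_\tau, z_s}\xi)$, i.e.\ an amplitude depending on $z_\tau$, $z_s$ and $\xi$.

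The final step is an amplitude-to-symbol reduction. Using the remark preceding Definition \ref{defSym}, in normal coordinates centred at $z_s$ the point $z_\tau$ has coordinates $(\tau-s)\dot\gamma_{y,x}(s)$, and the horizontal Taylor coefficients at $y=z_s$ of $y\mapsto a(y,\Phi_{y,z_s}\xi)$ are exactly $\nabla_x^\alpha a(z_s,\xi)$, so formally $a(z_\tau, \Phi_{z_\tau, z_s}\xi) \sim \sum_\alpha \tfrac{(\tau-s)^{|\alpha|}}{\alpha!}\,\dot\gamma_{y,x}(s)^\alpha\,\nabla_x^\alpha a(z_s,\xi)$. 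The factor $\dot\gamma_{y,x}(s)^\alpha$ is then converted into $D_\xi^\alpha$ by integration by parts, using $D_\xi^\alpha e^{i\varphi_s} = (-\dot\gamma_{y,x}(s))^\alpha e^{i\varphi_s}$; the two factors of $(-1)^{|\alpha|}$ (one from each step) cancel, and what remains inside the integral is $\tfrac{(\tau-s)^{|\alpha|}}{\alpha!}\,D_\xi^\alpha\nabla_x^\alpha a(z_s,\xi)$. Reading off the $s$-symbol gives the claimed series $\sigma_{A,s}(x,\xi) \sim \sum_\alpha \tfrac{1}{\alpha!}(\tau-s)^{|\alpha|}D_\xi^\alpha\nabla_x^\alpha \sigma_{A,\tau}(x,\xi)$, and equality of the classes is obtained by Borel summation inside $S^m_{\rho,\delta}(\nabla)$.

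The main obstacle I expect is the rigorous justification of the Taylor expansion as a genuine asymptotic modulo smoothing: for each $N$ the remainder must be shown to lie in $S^{m-(\rho-\delta)N}_{\rho,\delta}(\nabla)$ up to a kernel smooth across the diagonal, which requires careful tracking of how horizontal and vertical derivatives act on $\Phi_{z_\tau,z_s}$ and on $\dot\gamma_{y,x}(s)^\alpha$. Safarov's identities \eqref{trasp}--\eqref{determiTrans}, together with the seminorm definition of $S^m_{\rho,\delta}(\nabla)$ and the bounded-geometry control on the Christoffel symbols, provide exactly the bookkeeping needed to pass from the formal series to a symbol asymptotic, after which the equality $\Psi^m_{\rho,\delta}(\Omega^\kappa,\nabla,\tau) = \Psi^m_{\rho,\delta}(\Omega^\kappa,\nabla,s)$ is immediate.
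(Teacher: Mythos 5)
The paper does not prove this proposition itself: it is stated as one of several results quoted from Safarov \cite{Safarov}, with Proposition \ref{AmpliToSym} (amplitude-to-symbol reduction) cited right after it as another ready-made ingredient. Your reconstruction is correct and matches Safarov's route: the change of variable $\zeta=\Phi_{z_\tau,z_s}\xi$ in the $\tau$-oscillatory integral, the identity $\varphi_\tau(x,\Phi_{z_\tau,z_s}\xi,y)=\varphi_s(x,\xi,y)$ coming from $\nabla$-parallelism of $\dot\gamma$ and $T^*M$-$TM$ duality, the density identity $p_{\kappa,\tau}\Upsilon_{z_\tau,z_s}=p_{\kappa,s}$ via the cocycle property of $\Upsilon$, and the final recognition that the new integrand $a(z_\tau,\Phi_{z_\tau,z_s}\xi)$ is an amplitude in the sense of the calculus, to which the Taylor/integration-by-parts reduction applies, with $z_\tau$ having normal coordinates $(\tau-s)\dot\gamma_{y,x}(s)$ centred at $z_s$ producing exactly the $(\tau-s)^{|\alpha|}$ weights. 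This is precisely the content and order of Safarov's argument.

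One small imprecision worth flagging: the closing appeal to ``Borel summation inside $S^m_{\rho,\delta}(\nabla)$'' to get equality of classes is not quite the right framing. Once the kernel of $A$ has been rewritten as an $s$-phase oscillatory integral with an amplitude $b(z_\tau;z_s,\xi)=a(z_\tau,\Phi_{z_\tau,z_s}\xi)$ and one has verified (using the bounded-geometry control on Christoffel symbols, $\Phi$, and $\Upsilon$) that $b\in S^m_{\rho,\delta}(\nabla)$ as an amplitude, the class membership $A\in\Psi^m_{\rho,\delta}(\Omega^\kappa,\nabla,s)$ and the asymptotic formula for $\sigma_{A,s}$ follow in one stroke from the amplitude-to-symbol reduction (the paper's Proposition \ref{AmpliToSym}, with the roles of $\tau$ and $s$ interchanged). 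Borel summation is internal to the proof of that reduction, not a separate step you need to perform here; what does need explicit work, as you correctly anticipate, is the verification that $b$ lies in the amplitude class, i.e.\ that the mixed horizontal/vertical derivatives of $b$ in the $z_\tau$, $z_s$ and $\xi$ slots produce only admissible losses, which is exactly where Safarov's identities \eqref{trasp}--\eqref{determiTrans} enter.
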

\begin{rem}
\label{remCoNo}
    Proposition \ref{propTaoZero} allows us to focus on the case $\tau=0$ since all classes are equivalent when regarding the parameter $\tau$. Moreover, according to \eqref{changeOfCoor} we have in the normal coordinate system (n.c.s) $\{y^k\}$ centered at $x$ that locally a pseudo-differential operator $A$ is
    $$Au(x)= \frac{1}{(2\pi)^n}\int_{\bbR^n} \int_{T_x^*M}  \Upsilon_x^{1-\kappa}(y)e^{i(x-y)\cdot \zeta} a\left(x, \zeta\right) u(y) d\zeta |dy|.$$
\end{rem}
As in the classical calculus, we will need a tool that allows us to pass from amplitudes to symbols:
\begin{prop}
\label{AmpliToSym}
Let $0\leq \delta<\rho \leq 1$. Let $\tau, s \in[0,1]$ and let a be an amplitude from $\mathrm{S}_{\rho, \delta}^m(\nabla)$. Then the oscillatory integral
$$
\mathscr{A}(x, y)=\frac{1}{(2\pi)^n}p_{\kappa, \tau} \int_{T^*_{z_\tau}M} e^{i \varphi_\tau(x, \zeta, y)} a\left(z_s ; z_\tau, \zeta\right) d \zeta
$$
coincides with the Schwartz kernel of a pseudo-differential operator $A\in \Psi_{\rho, \delta}^m\left(\Omega^\kappa, \nabla, \tau\right)$ such that
$$
\begin{gathered}
\left.\sigma_{A, \tau}(x, \xi) \sim \sum_\alpha \frac{1}{\alpha !}(s-\tau)^{|\alpha|} D_{\xi}^\alpha \nabla_y^\alpha a(y ; x, \xi)\right|_{y=x} \quad \text { as }\langle\xi\rangle_x \rightarrow \infty, \\
\left.\sigma_{A, s}(x, \xi) \sim \sum_\alpha \frac{1}{\alpha !}(\tau-s)^{|\alpha|} D_\eta^\alpha \nabla_y^\alpha a(x ; y, \eta)\right|_{(y, \eta)=(x, \xi)} \quad \text { as }\langle\xi\rangle_x \rightarrow \infty.
\end{gathered}
$$
\end{prop}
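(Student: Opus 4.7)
The plan is to imitate the classical Euclidean amplitude-to-symbol reduction, performing the computation in a normal coordinate system where the oscillatory integral becomes Euclidean, and then identifying the resulting Taylor series with the covariant-derivative series required by the statement via the parallel-transport identity from the remark after Definition \ref{defSym}.

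First I would reduce to $\tau = 0$. By Proposition \ref{propTaoZero} the classes $\Psi_{\rho,\delta}^m(\Omega^\kappa,\nabla,\tau)$ do not depend on $\tau$, and once the first asymptotic in the statement is established for $\tau = 0$, applying Proposition \ref{propTaoZero} once more yields the second asymptotic for $\sigma_{A,s}$, while composing with the change-of-parameter formula of the same proposition handles general $\tau$. Working in a normal coordinate system $\{y^k\}$ centered at $x$, Remark \ref{remCoNo} combined with \eqref{changeOfCoor} reduces the kernel to
\begin{equation*}
\mathscr{A}(x,y)=\frac{1}{(2\pi)^n}\Upsilon_x(y)^{1-\kappa}\int_{T^*_xM} e^{i(x-y)\cdot\zeta}\, a(z_s; x, \zeta)\, d\zeta,
\end{equation*}
in which the geodesic segment simplifies to $z_t = ty$, so the first argument of the amplitude is $z_s = sy$ sitting on the straight line from $x = 0$ to $y$.

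Next comes the core step: Taylor-expand $a(z_s; x, \zeta)$ in its first variable along this geodesic around $z_s = x$. The expansion produces powers $s^{|\alpha|}(y-x)^\alpha/\alpha!$ multiplying derivatives of $a$ in the first variable, evaluated at $y = x$. Using the identity $\nabla_y^\alpha f(y,\eta) = (d/dy)^\alpha f(y, \Phi_{y,x}\eta)\big|_{y=x}$ from the remark after Definition \ref{defSym}, applied to the function $y \mapsto a(y; x, \zeta)$ with $(x,\zeta)$ frozen (so that the transport acts trivially when evaluated at $x$), these Taylor coefficients identify with the covariant-derivative coefficients appearing in the statement. Integration by parts in $\zeta$ then transfers each factor $(y-x)^\alpha$ onto $D_\zeta^\alpha$ acting on the amplitude, producing precisely the terms $\frac{s^{|\alpha|}}{\alpha!}D_\zeta^\alpha \nabla_y^\alpha a(y;x,\zeta)\big|_{y=x}$ of the series. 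The Taylor remainder of order $N$ gives an oscillatory integral with amplitude lying in $S^{m-N(\rho-\delta)}_{\rho,\delta}(\nabla)$ by standard seminorm estimates as in Example \ref{cotaDerivadademetrica}, hence represents a symbol of arbitrarily negative order as $N \to \infty$, making the series asymptotic in the sense of Definition \ref{defSym}. The density prefactor $\Upsilon_x(y)^{1-\kappa}$ is treated in parallel: Taylor-expanding it using \eqref{determiTrans} and the fact that $\Upsilon_x(x) = 1$, its higher-order corrections feed into lower-order terms that are absorbed by the asymptotic tail.

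The main obstacle I expect is the geometric bookkeeping in the Taylor step: verifying that the naive coordinate Taylor coefficients really equal the covariant expression $\nabla_y^\alpha a(y;x,\zeta)|_{y=x}$ appearing in the statement, especially when $\nabla$ is not symmetric, in which case by \eqref{trasp} the Christoffel symbols at the origin of the normal coordinates equal $\tfrac{1}{2}T(x) \neq 0$ and the ordinary and covariant derivatives do not agree at a single point. The parallel-transport identity cited above is the right tool here, but it must be applied carefully because $\zeta \in T^*_xM$ is fixed (not transported) in the oscillatory integral, so the discrepancy between the fixed-$\zeta$ and transported-$\zeta$ expansions must be tracked and shown to contribute only lower-order terms. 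Once this identification is in place, the second formula for $\sigma_{A,s}$ follows from the first by Proposition \ref{propTaoZero}, and the sign flip $(s-\tau)\mapsto(\tau-s)$ emerges naturally from the change of parameter.
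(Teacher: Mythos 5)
The paper does not prove Proposition~\ref{AmpliToSym}: it is stated, together with Propositions~\ref{propTaoZero}, \ref{adjunto}, \ref{producto}, \ref{L2bound}, as one of the results quoted directly from Safarov's paper \cite{Safarov}, introduced by ``Now we state some of the results proved by Safarov.'' So there is no in-paper proof to compare your attempt against; it can only be judged on its own merits.

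Your outline is the right one --- the standard Euclidean amplitude-to-symbol reduction carried out in a normal coordinate system, with $z_\tau$ becoming a point on the straight line joining $x$ and $y$ --- and it is also essentially the route Safarov takes. Three of its steps, however, are either loose or slightly off.

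(i) The ``reduction to $\tau=0$'' is not a clean logical reduction. The kernel in the statement carries $\varphi_\tau$, $p_{\kappa,\tau}$ and the integration over $T^*_{z_\tau}M$, so for general $\tau$ the natural choice is a n.c.s.\ centred at $z_\tau$, not at $x$; your display formula silently assumes the $\tau=0$ form. Either carry out the general-$\tau$ computation from $z_\tau$ (which changes the bookkeeping but not the method), or first rewrite the $\tau$-kernel as a $0$-kernel with a modified amplitude via the change of covariable $\tilde\zeta=\Psi_\tau\zeta$ as in \eqref{changeOfCoor}, and only then invoke Proposition~\ref{propTaoZero}.

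(ii) You flag as ``the main obstacle'' the mismatch between the coordinate Taylor coefficients $\partial_w^\alpha a(w;x,\zeta)\big|_{w=x}$ and the covariant objects $\nabla_y^\alpha a(y;x,\zeta)\big|_{y=x}$ when $\nabla$ is not symmetric, noting $\Gamma^i_{jk}(x)=\tfrac12 T^i_{jk}(x)$ from \eqref{trasp}. This is the right thing to worry about but the resolution is actually already in the paper: $\nabla_y^\alpha$ in Safarov's formulas is, by the remark following Definition~\ref{defSym}, the parallel-transport derivative $\frac{d^\alpha}{dy^\alpha}a(y,\Phi_{y,x}\eta)\big|_{y=x}$, which is symmetrized by construction; and even for the iterated covariant derivative $\nabla_{i_1}\cdots\nabla_{i_q}$ the anti-symmetric (torsion) corrections are killed in the displayed series because they are contracted against the fully symmetric $D_\xi^\alpha$ and summed over multi-indices $\alpha$. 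You should make this observation explicit rather than leaving the obstacle open.

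(iii) The concern about the density prefactor $\Upsilon_x^{1-\kappa}$ is misplaced: $p_{\kappa,\tau}$ sits in front of the oscillatory integral both in the amplitude kernel of the statement \emph{and} in the defining kernel of a generic element of $\Psi^m_{\rho,\delta}(\Omega^\kappa,\nabla,\tau)$, so it cancels when equating the two representations and does not need to be Taylor-expanded. Expanding it and ``absorbing the corrections'' would actually produce spurious terms with no $s$-dependence that do not appear in the claimed asymptotic series.

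With (ii) and (iii) corrected and (i) tightened, the argument would be a faithful reconstruction of Safarov's original proof; the remainder estimate via $S^{m-N(\rho-\delta)}_{\rho,\delta}(\nabla)$ and the integration-by-parts step are fine as stated.
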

Here below come the most important results because they are the ones that extend the classical results, allowing us to deal with new classes of operators thanks to the consideration of the connection $\nabla$ and the geometry that it implies. 
\begin{thm}
\label{adjunto}
     Let $0\leq \delta<\rho \leq 1$. If $A \in \Psi_{\rho, \delta}^{m}\left(\Omega^{\kappa}, \nabla\right)$ then $A^*\in \Psi_{\rho, \delta}^{m}\left(\Omega^{1-\kappa}, \nabla\right)$ and
    $$\sigma_{A^*, \tau}(x,\xi) \sim \sum_{\alpha}\frac{1}{\alpha!}(1-2\tau)^{|\alpha|}D_\xi^\alpha\nabla_x^\alpha \overline{\sigma_{A,\tau}(x,\xi)},$$
    
    as $\langle\xi\rangle_x\to\infty.$ In particular, $\sigma_{A^*, 1/2}(x,\xi)= \overline{\sigma_{A, 1/2}(x,\xi)}$.
\end{thm}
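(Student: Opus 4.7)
The plan is to obtain the Schwartz kernel of $A^*$ by computing $\mathscr{A}^*(x,y)=\overline{\mathscr{A}(y,x)}$ directly from the oscillatory integral representation, recognize the result as representing an element of $\Psi_{\rho,\delta}^{m}(\Omega^{1-\kappa},\nabla,1-\tau)$, and then use the change-of-parameter Proposition \ref{propTaoZero} to pass from parameter $1-\tau$ back to $\tau$.

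The key geometric observation is that the shortest geodesic satisfies $\gamma_{x,y}(t)=\gamma_{y,x}(1-t)$. Hence $z_\tau$ computed from $(y,x)$ equals $z_{1-\tau}$ computed from $(x,y)$, and the velocity vectors at corresponding parameter values differ by a sign, giving $\varphi_\tau(y,\zeta,x)=-\varphi_{1-\tau}(x,\zeta,y)$. A short computation using $\Upsilon_{a,b}=\Upsilon_{b,a}^{-1}$ and the definition of $p_{\kappa,\tau}$ shows the density identity $p_{\kappa,\tau}(y,x)=p_{1-\kappa,1-\tau}(x,y)$ (which is also real, so unchanged by conjugation). Taking the complex conjugate of the oscillatory integral for $\mathscr{A}(y,x)$, the conjugation on the exponential flips the sign of the phase, producing exactly the phase $\varphi_{1-\tau}$, so
\[
\mathscr{A}^*(x,y)=\frac{1}{(2\pi)^n}\,p_{1-\kappa,1-\tau}(x,y)\int_{T^*_{z_{1-\tau}}M}e^{i\varphi_{1-\tau}(x,\zeta,y)}\,\overline{a(z_{1-\tau},\zeta)}\,d\zeta.
\]
Since the class $S_{\rho,\delta}^m(\nabla)$ of Definition \ref{defSym} is trivially closed under complex conjugation, this exhibits $A^*$ as an element of $\Psi_{\rho,\delta}^{m}(\Omega^{1-\kappa},\nabla,1-\tau)$ with symbol $\sigma_{A^*,1-\tau}(x,\xi)=\overline{\sigma_{A,\tau}(x,\xi)}$.

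To finish, I apply Proposition \ref{propTaoZero} with the roles of the two parameters being $1-\tau$ and $\tau$:
\[
\sigma_{A^*,\tau}(x,\xi)\sim\sum_\alpha\frac{1}{\alpha!}((1-\tau)-\tau)^{|\alpha|}D_\xi^\alpha\nabla_x^\alpha\sigma_{A^*,1-\tau}(x,\xi)=\sum_\alpha\frac{1}{\alpha!}(1-2\tau)^{|\alpha|}D_\xi^\alpha\nabla_x^\alpha\overline{\sigma_{A,\tau}(x,\xi)},
\]
as $\langle\xi\rangle_x\to\infty$. For $\tau=1/2$ the prefactor $(1-2\tau)^{|\alpha|}$ annihilates every term with $|\alpha|\geq 1$, so the expansion collapses to $\sigma_{A^*,1/2}(x,\xi)=\overline{\sigma_{A,1/2}(x,\xi)}$.

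The main obstacle is the careful bookkeeping around swapping $x\leftrightarrow y$: verifying that the density factor $p_{\kappa,\tau}(y,x)$ really coincides with $p_{1-\kappa,1-\tau}(x,y)$ (which is what forces the shift $\kappa\mapsto 1-\kappa$ in the target density bundle), and that the sign produced by the geodesic reversal on the phase exactly cancels the sign coming from the complex conjugation of $e^{i\varphi_\tau}$, so that the resulting expression is indeed a Safarov-type oscillatory integral with the correct phase $\varphi_{1-\tau}$ rather than $-\varphi_{1-\tau}$. After this geometric bookkeeping, the rest is a direct invocation of Proposition \ref{propTaoZero}.
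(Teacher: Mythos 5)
Your argument is correct. In fact the paper does not supply its own proof of this theorem: it is quoted verbatim from Safarov's 1997 paper (cited as \cite{Safarov}), so there is no in-paper derivation to compare against. Your proof is the natural one and, in substance, is Safarov's argument: you observe the geodesic reversal $\gamma_{x,y}(t)=\gamma_{y,x}(1-t)$, which gives $z_\tau(y,x)=z_{1-\tau}(x,y)$ and $\varphi_\tau(y,\zeta,x)=-\varphi_{1-\tau}(x,\zeta,y)$; you verify the density identity $p_{\kappa,\tau}(y,x)=p_{1-\kappa,1-\tau}(x,y)$ via $\Upsilon_{a,b}=\Upsilon_{b,a}^{-1}$ (I checked: $p_{1-\kappa,1-\tau}(x,y)=\Upsilon_{y,z_{1-\tau}}^{\kappa}\Upsilon_{z_{1-\tau},x}^{\kappa-1}=\Upsilon_{z_{1-\tau},y}^{-\kappa}\Upsilon_{x,z_{1-\tau}}^{1-\kappa}=p_{\kappa,\tau}(y,x)$, and the $p$-factor is a positive real number so conjugation is harmless); and you note that the conjugated exponential flips the sign of the phase, cancelling the sign from the reversal. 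This cleanly exhibits $\mathscr{A}^*(x,y)=\overline{\mathscr{A}(y,x)}$ as a Safarov oscillatory integral with density parameter $1-\kappa$, quantization parameter $1-\tau$, and symbol $\overline{a}\in S_{\rho,\delta}^m(\nabla)$, giving $\sigma_{A^*,1-\tau}=\overline{\sigma_{A,\tau}}$ exactly. Feeding this into Proposition \ref{propTaoZero} with the parameter roles $(\text{from},\text{to})=(1-\tau,\tau)$ correctly produces the coefficient $((1-\tau)-\tau)^{|\alpha|}=(1-2\tau)^{|\alpha|}$, and all indices and density weights check out.

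One cosmetic point: for $\tau=1/2$ you deduce the equality $\sigma_{A^*,1/2}=\overline{\sigma_{A,1/2}}$ from the collapse of the asymptotic expansion, which a priori gives only an equality modulo $S^{-\infty}$. It is cleaner to note that the exact identity $\sigma_{A^*,1-\tau}=\overline{\sigma_{A,\tau}}$ specializes, at $\tau=1/2$ (where $1-\tau=\tau$), to the claimed genuine equality directly, without invoking the asymptotic formula at all. This is stronger than what the expansion-collapse argument alone gives, and it is presumably what the theorem's final sentence intends.
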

\begin{thm}{}
\label{producto}
Let $0\leq \delta<\rho \leq 1$. Let $A \in \Psi_{\rho, \delta}^{m_{1}}\left(\Omega^{\kappa}, \nabla\right), B \in \Psi_{\rho, \delta}^{m_{2}}\left(\Omega^{\kappa}, \nabla\right)$, and let at least one of these pseudo-differential operators be properly supported. Assume that at least one of the following conditions is fulfilled:
\begin{enumerate}
    \item $\rho>\frac{1}{2}$;
    \item the connection $\nabla$ is symmetric and $\rho>\frac{1}{3}$;
    \item the connection $\nabla$ is flat.
\end{enumerate}
Then $A B \in \Psi_{\rho, \delta}^{m_{1}+m_{2}}\left(\Omega^{\kappa}, \nabla\right)$ and
\begin{equation}
\label{asyExp}
    \sigma_{A B}(x, \xi) \sim \sum_{\alpha, \beta, \gamma} \frac{1}{\alpha !} \frac{1}{\beta !} \frac{1}{\gamma !} P_{\beta, \gamma}^{(\kappa)}(x, \xi) D_{\xi}^{\alpha+\beta} \sigma_{A}(x, \xi) D_{\xi}^{\gamma} \nabla_{x}^{\alpha} \sigma_{B}(x, \xi)
\end{equation}
as $\langle\xi\rangle_{x} \rightarrow \infty$. 
\end{thm}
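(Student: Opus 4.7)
The plan is to reduce the composition to a standard oscillatory-integral calculation on $T_x^*M$ via the exponential map, and then extract the asymptotic expansion by Taylor-expanding the composed phase along the diagonal. By Proposition \ref{propTaoZero} I may assume $\tau=0$ in the representation of both $A$ and $B$ without loss of generality. Since at least one of the operators is properly supported, the kernel of $AB$ is given by
\[
\mathscr{A}_{AB}(x,w) \;=\; \int_M \mathscr{A}_A(x,y)\,\mathscr{A}_B(y,w)\,|dy|,
\]
which is smooth off the diagonal. I would then work in a chart of normal coordinates $\{y^k\}$ centred at $x$; by Remark \ref{remCoNo} the kernels of $A$ and $B$ locally take the form $\int e^{i(x-y)\cdot\zeta}\,\Upsilon_x^{1-\kappa}(y)\,\sigma_A(x,\zeta)\,d\zeta$ and analogously for $B$, with its normal chart centred at $y$.

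Substituting and interchanging integrations produces a triple oscillatory integral in $(y,\zeta,\eta)$ with $\zeta\in T_x^*M$ and $\eta\in T_y^*M$. I would transport $\eta$ to $T_x^*M$ by the substitution $\eta=\Phi_{y,x}\eta'$, picking up a Jacobian $\Upsilon_{y,x}$ and assembling the density factors into what eventually becomes $p_{\kappa,0}$. The resulting phase
\[
\Phi(x,w,y,\zeta,\eta') \;=\; (x-y)\cdot\zeta + (y-w)\cdot(\Phi_{y,x}\eta')
\]
has critical set $\{y=x,\ \eta'=\zeta\}$ in the variables $(y,\eta')$. Applying stationary phase in those variables (equivalently, integrating by parts against the non-stationary directions) yields an amplitude in $(x,w,\zeta)$ whose Taylor data at the critical set is expressible through derivatives of $\Phi$ and of $\Upsilon_{y,x}$; Proposition \ref{AmpliToSym} then converts this amplitude into the symbol of $AB$.

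The coefficients $P_{\beta,\gamma}^{(\kappa)}$ appearing in \eqref{asyExp} arise precisely as these Taylor coefficients, together with derivatives of $\Upsilon_{y,x}$ and $\Phi_{y,x}$ at $y=x$. By the identities \eqref{trasp}--\eqref{determiTrans} the first-order contribution is governed by the torsion $T$, and the higher-order ones by tensors built from $T$, $R$ and their covariant derivatives. The central technical point is a seminorm bookkeeping (in the spirit of Example \ref{cotaDerivadademetrica}): each term of formal order $N$ in the expansion carries $N$ factors of $\langle\xi\rangle^{-\rho}$ from vertical differentiation, while the Taylor remainder of the phase contains cubic and higher powers of $(y-x)$ paired with $\zeta$, re-introducing positive powers of $\langle\xi\rangle^{1-\rho}$. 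A careful count shows that the cubic remainder costs $\langle\xi\rangle^{1-3\rho}$ and the quartic one $\langle\xi\rangle^{2-4\rho}$.

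The main obstacle is controlling these remainders when $\rho$ is small. Condition (1) already makes every Taylor correction subcritical, which is why the classical composition theorem holds without any constraint on $\nabla$. Under condition (2) the cubic obstruction vanishes because it is a multiple of $T$, so one only needs the quartic term $\langle\xi\rangle^{2-4\rho}$ to be subcritical, giving exactly $\rho>1/3$. Under condition (3), parallel transport is locally trivial in normal coordinates so that $\Phi_{y,x}$ is constant and the composed phase collapses to the Euclidean one, whence all remainders vanish identically regardless of $\rho$. A standard Borel-summation argument then produces a symbol $\sigma$ realising the formal series \eqref{asyExp}, and verifying that $\sigma_{AB}-\sigma\in\bigcap_m S_{\rho,\delta}^m(\nabla)$ completes the proof.
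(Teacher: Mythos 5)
The paper does not prove Theorem~\ref{producto}: it is quoted from Safarov~\cite{Safarov}, where it is one of the central results. Your proposal is a reconstruction of Safarov's argument and the global scheme does match his --- compose Schwartz kernels, pass to normal coordinates, change the second covariable by parallel transport, apply stationary phase around the critical set, and read off the expansion from the Taylor data of the resulting amplitude (the coefficients $P_{\beta,\gamma}^{(\kappa)}$ coming precisely from the Taylor expansion of $e^{i\psi}\Upsilon_\kappa$, as the paper's remark after the theorem records).

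There is, however, a genuine gap in your degree bookkeeping for the phase correction $\psi$, and it sits exactly where the three conditions on $\rho$ must come from. The auxiliary phase $\psi(x,\xi;y,z)$ is linear in $\xi$ and vanishes identically both when $y=z$ and when $z=x$; this forces its Taylor expansion in $(y-x,z-x)$ to begin at order \emph{two}, not three. Via the identities \eqref{trasp}--\eqref{determiTrans}, the quadratic coefficient is controlled by the torsion $T$. A term quadratic in $(y-x,z-x)$ and linear in $\xi$ costs $\langle\xi\rangle^{1-2\rho}$ after integration by parts, which is subcritical precisely when $\rho>\frac12$ --- this is condition~(1). If $T\equiv 0$ the quadratic part of $\psi$ disappears and the leading surviving term is cubic, with coefficients built from $R$ and $\nabla T$; it costs $\langle\xi\rangle^{1-3\rho}$, subcritical precisely when $\rho>\frac13$ --- condition~(2). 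If $\nabla$ is flat, $\psi\equiv 0$ locally and no condition is needed --- condition~(3). You instead start the expansion of $\psi$ at cubic order, attribute the torsion term to that cubic, and then invoke a quartic term of cost $\langle\xi\rangle^{2-4\rho}$ to derive $\rho>\frac13$. But $2-4\rho<0$ is equivalent to $\rho>\frac12$, not $\rho>\frac13$, so even on its own terms the arithmetic does not produce the stated threshold. Lowering every degree by one --- quadratic for torsion, cubic for curvature --- gives the correct exponents $1-2\rho$ and $1-3\rho$ and hence the thresholds $\rho>\frac12$ and $\rho>\frac13$. A smaller imprecision worth flagging: in your composed phase the factor $(y-w)$ must be interpreted as $\dot\gamma_{w,y}$ expressed in the $x$-centred normal chart, which coincides with the coordinate difference only in the flat case; this is exactly the discrepancy that the phase $\psi$ is designed to capture, and writing it as a literal coordinate difference is what hides the quadratic term from view.
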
 
\begin{rem}
    Here the condition of being properly supported is guaranteeing that the product of the operators is well-defined. 
\end{rem}
\begin{rem}
    The functions $P_{\beta, \gamma}^{(\kappa)}(x, \xi)$ are the responsible of allowing us to lower down the order of $\rho$, since they are supporting extra decay because they are polynomial in $\xi$ with coefficients depending on the curvature, torsion and its covariant derivatives. For completeness we remind its definition: 
    \[
    P_{\beta, \gamma}^{(\kappa)}(x, \xi) = \left((\partial_y+\partial_z)^\beta \partial_y^\gamma \sum_{|\beta'|\leq |\beta|} \frac{1}{\beta'!}D_\xi^{\beta'}\partial_y^{\beta'}(e^{i\psi}\Upsilon_{\kappa})\right)\bigg|_{y=z=x}, 
    \]
    where 
    \begin{align*}
        &\Upsilon_{\kappa}(x,y,z)=\Upsilon_{y,z}^{1-\kappa}\Upsilon_{z,x}^{2-\kappa}\Upsilon_{x,y}^{1-\kappa}, \\
        \psi(x,\xi;y,&z)= \langle\dot{\gamma}_{y,x},\xi\rangle-\langle\dot{\gamma}_{z,x},\xi\rangle-\langle\dot{\gamma}_{y,z},\Phi_{z,x}\xi\rangle,
    \end{align*}
    in normal coordinates. 
\end{rem}
\begin{thm}
\label{L2bound}
Let $0\leq \delta<\rho \leq 1$. Let at least one of Conditions (1)-(3) of Theorem \ref{producto} be fulfilled. Then $A\in \Psi_{\rho, \delta}^{m}\left(\Omega^{\kappa}, \nabla\right)$ is bounded from $H^s_{\operatorname{comp}}(M, \Omega^\kappa)$ to $H^{s-m}_{\operatorname{loc}}(M, \Omega^\kappa)$ for all $s\in \bbR$.
\end{thm}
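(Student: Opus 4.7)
The strategy is standard in spirit: reduce to the case $m=s=0$ by conjugating with Bessel potentials, then obtain $L^2$-boundedness of zeroth-order operators by H\"ormander's square-root trick, closing the induction with a kernel estimate of Schur type. The crucial inputs are the composition Theorem \ref{producto} and the adjoint Theorem \ref{adjunto}; it is precisely the composition formula that requires Conditions (1)--(3) on $\rho$ and $\nabla$.

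\emph{Step 1 (Reduction to order zero).} One constructs Bessel potentials $\Lambda^t \in \Psi_{1,0}^t(\Omega^\kappa,\nabla)\subset \Psi_{\rho,\delta}^t(\Omega^\kappa,\nabla)$, from, say, fractional powers of $(1+\Delta_g)$, so that $\|u\|_{H^s}\asymp \|\Lambda^s u\|_{L^2}$ characterizes $H^s_{\operatorname{loc}}$ and $H^s_{\operatorname{comp}}$. Theorem \ref{producto} then gives $\widetilde{A} := \Lambda^{s-m}\circ A\circ \Lambda^{-s} \in \Psi_{\rho,\delta}^0(\Omega^\kappa,\nabla)$. Setting $g = \Lambda^s f$ and assuming the conclusion is known for zeroth-order operators,
$$\|Af\|_{H^{s-m}_{\operatorname{loc}}} \lesssim \|\widetilde{A}g\|_{L^2_{\operatorname{loc}}} \lesssim \|g\|_{L^2_{\operatorname{comp}}} = \|f\|_{H^s_{\operatorname{comp}}},$$
modulo smoothing errors coming from a parametrix identity $\Lambda^{-s}\Lambda^s = I + S$, which contribute only bounded terms.

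\emph{Step 2 (Order zero via the square-root trick).} Let $a\in S^0_{\rho,\delta}(\nabla)$ be the full symbol of $A$. Choose $M>0$ with $M^2$ strictly larger than $\sup|a|^2$, and set $b := (M^2 - |a|^2)^{1/2}$. Arguing as in Example \ref{cotaDerivadademetrica} together with Fa\`a di Bruno's formula, one verifies $b\in S^0_{\rho,\delta}(\nabla)$. Using Theorem \ref{adjunto} to compute the principal parts of $\sigma_{A^*}$ and $\sigma_{B^*}$ (namely $\overline{a}$ and $\overline{b}$) and then Theorem \ref{producto} to form $A^*A$ and $B^*B$, one obtains
$$A^*A + B^*B = M^2\, I + R, \qquad R \in \Psi_{\rho,\delta}^{-(\rho-\delta)}(\Omega^\kappa,\nabla),$$
since the $|\alpha|=|\beta|=|\gamma|=0$ term in the asymptotic expansion \eqref{asyExp} for $\sigma_{A^*A}+\sigma_{B^*B}$ is exactly $|a|^2+|b|^2 = M^2$, and each subsequent term lowers the order by at least $\rho-\delta$. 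Pairing with a test density $u\in\Gamma_c(\Omega^\kappa)$,
$$\|Au\|_{L^2}^2 \leq M^2\|u\|_{L^2}^2 + |\langle Ru,u\rangle|,$$
and iterating the same argument with $R$ in place of $A^*A$ reduces the question to the $L^2$-boundedness of a pseudo-differential operator of arbitrarily low order.

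\emph{Step 3 (Very negative order via Schur).} Once the order is low enough, one estimates the Schwartz kernel directly: in the oscillatory representation of Remark \ref{remCoNo}, repeated integration by parts in $\zeta$ converts rapid decay in $\langle\zeta\rangle_x$ into decay of $\mathscr{R}(x,y)$ in the Riemannian distance $d(x,y)$, uniformly in $x$ by bounded geometry. Schur's test combined with Lemma \ref{volGrowth} then yields $L^2_{\operatorname{comp}}\to L^2_{\operatorname{loc}}$ boundedness; the smooth piece $K(x,y)$ of Remark \ref{kernels} is bounded trivially on compacts, and under Assumption \ref{assu} the same argument delivers the global bound.

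\emph{Where the difficulty lies.} The delicate step is Step 2, specifically confirming that $(M^2-|a|^2)^{1/2}\in S^0_{\rho,\delta}(\nabla)$ under the horizontal lifts $\nabla_k$, and that the remainder $R$ actually gains the order $-(\rho-\delta)$ rather than falling into a weaker class. This is exactly where the three regimes on $(\rho,\nabla)$ matter: outside of them Theorem \ref{producto} is not available and the induction breaks down. Once that is secured, the manifold aspects (non-compactness, $\kappa$-densities, the global piece $A_{\operatorname{glo}}$) are handled by the bounded geometry hypothesis.
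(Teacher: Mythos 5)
The paper does not actually give a proof of this theorem: Theorem \ref{L2bound} is one of the results quoted verbatim from Safarov's original paper \cite{Safarov}, introduced with the sentence ``Now we state some of the results proved by Safarov \ldots''. So there is no in-paper argument to compare yours against; the theorem is used as a black box throughout Section \ref{ultima} (for instance in the proofs of Theorems \ref{bmo}, \ref{lpParaOrdenFijo}, \ref{lpFinal}).

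That said, your reconstruction is the canonical route for proving $L^2$-boundedness once one has an adjoint theorem and a composition theorem: reduce to order zero by conjugation with Bessel potentials (which is exactly how the paper itself later bootstraps, see Subsection \ref{BesselPotentials} and Remark \ref{NormaBessel}), then apply H\"ormander's square-root trick with a finite downward induction in the order of the remainder, and close with a kernel/Schur estimate at very negative order. It is almost certainly the argument in Safarov's paper too. A few points to tighten. First, to form $A^*A$ in the Safarov framework you must account for the fact that Theorem \ref{adjunto} sends $\Psi^m_{\rho,\delta}(\Omega^\kappa,\nabla)$ to $\Psi^m_{\rho,\delta}(\Omega^{1-\kappa},\nabla)$; the composition only makes direct sense when $\kappa=1/2$, so you should either work with half-densities throughout or conjugate by the fixed section $|dx|^{1/2-\kappa}$ (which is harmless since the bundle is trivial and $|dx|$ is smooth and positive). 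Second, Theorem \ref{producto} demands that at least one factor be properly supported; you need the usual reduction of a general operator to a properly supported one modulo a smoothing remainder. Third, your assertion that $R\in\Psi^{-(\rho-\delta)}_{\rho,\delta}$ is stated as if each term in the expansion \eqref{asyExp} gains exactly $\rho-\delta$; under conditions (2) and (3) of Theorem \ref{producto} the gain per term is governed by the polynomial factors $P^{(\kappa)}_{\beta,\gamma}$ and may differ numerically, but what matters --- and what Safarov's conditions guarantee --- is that it is strictly positive, so your finite iteration still terminates. With these caveats noted, the proposal is a sound proof.
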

The latter results would be the key pieces to our proof of the $L^p$-boundedness. 
\subsubsection{Some examples of symbols}\label{examples}
We remind some examples of symbols discussed by Safarov and others, and we include some comments on the case of the torus $\bbT^n$. 

\begin{ex}[The Laplacian]\label{ex1}
    Let  $(M,g)$ be a Riemannian manifold, recall that the Laplace-Beltrami operator associated to the metric $g$ is given by 
\begin{equation}\label{laplaciano}
    Lu(x) = \frac{1}{\sqrt{|\operatorname{det}g_{ij}(x)|}}\partial_{x^i}\left(\sqrt{|\operatorname{det}g_{ij}(x)|} g^{ij}(x)\partial_{x^j}u(x)\right),
\end{equation}
where $g_{ij}$ and $g^{ij}$ are the metric and inverse metric coefficients, respectively. If one takes the Levi-Civita connection $\nabla_{LC}$ of $M$, then one can prove that the $\tau$-symbol of $L$ for any value of $\kappa$ and $\tau$ is given by 
\[
a_{L,\tau}(x,\xi)=-|\xi|_x^2+\frac{1}{3}S(x),
\]
where $S(x)$ is the scalar curvature of $M$. If one does not use $\nabla_{LC}$, but another connection $\nabla$ more terms related to the Christoffel symbols will appear in the previous formula.  This is already a relevant difference compared with the classical theory since in that case the principal symbol of the Laplacian is only given by $-|\xi|_x^2$. 

Moreover, let us also assume that $M$ is compact and let $H$ to be a first-order differential operator or a classical pseudo-differential operator. Consider the operator 
\[
P=(-L+H)^{1/2},
\]
and let $S_\rho^m(\bbR)$ denote the class of functions $\omega$ such that
\[
\left|\frac{d^k}{ds^k}\omega(s)\right|\leq C_k(1+|s|)^{m-k\rho} \text{ with } 0<\rho\leq 1.
\]
Therefore \cite[Theorem 11.2]{Safarov} guarantees that if $\omega\in S_\rho^m(\bbR)$, then $\omega(P)\in \Psi_{\rho,0}^m(\Omega^\kappa, \nabla_{LC})$ and we have the following asymptotic expansion for its $0$-symbol:
\[
a_{\omega(P)}(x,\xi)\sim \omega(|\xi|_x) + \sum_{j=1}^\infty c_j(x,\xi) \omega^{(j)}(|\xi|_x) \text{ as } |\xi|_x\to\infty, 
\]
where $c_j$ are functions determined recursively and $w^{(j)}(|\xi|_x)$ is the $j$th derivative of $\omega$ evaluated at $|\xi|_x$. 
\end{ex}
\begin{ex}[Connections arising from vector fields]\label{ex2}
    In this case $M$ is not necessarily a Riemannian manifold neither compact. Let $x\in M$ and let $\{\nu_1(x), \cdots,\nu_n(x)\}$ be a frame, i.e., a basis of $T_xM$. Then there exist functions $C_{jk}^i\in C^\infty(M)$ such that the commutator of this basis elements is written as 
    \[
    [\nu_j(x),\nu_k(x)] = C_{jk}^i(x)\nu_i(x). 
    \]
    From these functions one can construct a connection $\nabla$ such that its Christoffel symbols are given by
    \[
    \Gamma_{jk}^i(x) = -\tilde{\nu}_k^l(x)\partial_{x^j}\nu_l^i(x),
    \]
    where $\{\tilde{\nu}_1(x), \cdots,\tilde{\nu}_n(x)\}$ is the dual frame. Thus, if we denote by $A_l^{(\kappa)}$ the Lie differentiations along $v_l$ in the space of $\kappa$-densities and define 
    \[
    A_{(\kappa)}^\alpha(y,D_y)=\sum_{i_1, \cdots, i_q} A_{i_1}^{(\kappa)} \cdots A_{i_q}^{(\kappa)} \text{ with } q=|\alpha|,
    \]
    one can prove that assuming 
    \[
    C_{jk}^i\equiv 0 \text{ for all } i\geq k,
    \]
    that the symbol of such operators is given simply by 
    \[
    \sigma_{A_{(\kappa)}^\alpha}(x,\xi)= i^{|\alpha|}\langle v_1(x),\xi\rangle^{\alpha_1}\cdots\langle v_n(x),\xi\rangle^{\alpha_n},
    \]
    where $\langle\cdot,\cdot\rangle$ here means the natural pairing between vectors and covectors. 
    
    Notice that the previous construction is of particular interest when applied to parallelizable manifolds because in that case one has a global frame $\{\nu_1, \cdots,\nu_n\}\subset\mathfrak{X}(M)$, i.e., the tangent bundle is trivial $TM\cong M\times\bbR^n$; so it would be suitable to the case of Lie groups. 
\end{ex}

\begin{rem}
    The previous example was generalized by Shargorodsky in \cite{Shargorodsky} in two different ways simultaneously. On one hand, defining operators not only acting on functions or densities, but on vector bundles. On the other hand, constructing an anisotropic version of it. By this we mean that when considering homogeneous symbols, now one allows them to have different weights for the different directions corresponding to each vector field. This let the author to consider semi-elliptic operators instead of the classical elliptic ones. For this reason, he defined a particular function $w$ in Definition \ref{defSym} to measure properly each weight. Let us briefly introduce his symbol classes. Let $\bm{d}= (d_1, \cdots, d_n)\in \bbR^n$ be a vector such that for all $k$,  $d_k>0$ and 
    \[
    \sum_{k=1}^n\frac{1}{d_k}=n.
    \]
    For any multi-index $\alpha\in \bbN^n$ we put 
    \[
    |\alpha:\bm{d}|= \sum_{k=1}^n\frac{\alpha_k}{d_k}. 
    \]
    For any vector $\eta\in \bbR^n\backslash \{0\}$ let $w(x,\eta):= \tau(\eta)=|\eta|_{\bm{d}}$ be the unique solution to the equation
    \[
    \sum_{k=1}^n \tau^{-2/d_k}\eta_k^2=1. 
    \]
    So, let $m\in\bbR$, $0\leq\delta,\rho\leq 1$. The classes $S_{\rho,\delta}^{m,\bm{d}}(M\times \bbR^n)$ are functions $a\in C^\infty(M\times \bbR^n)$ such that in any coordinates $y$ running in any compact set $K\subset M$, for all $\alpha$ and $j_1, \ldots, j_q$,
    $$
\left|\partial_{\eta}^{\alpha} \partial_{\nu_{j_1}} \ldots \partial_{\nu_{j_q}} a(y, \eta)\right| \leq \mathrm{const}_{K, \alpha, j_{1}, \ldots, j_{q}}(1+|\eta|_{\bm{d}})^{m+\delta |\beta:\bm{d}|-\rho|\alpha:\bm{d}|},
$$
where $\beta$ is the multi-index corresponding to the set of indices $\{j_1, \cdots,j_q\}$. Shargorodsky obtained several results for these classes: adjoints, compositions, $L^p$-estimates, compactness, Fredholmness, etc. We point out that his $L^p$-results concern $0$-order pseudo-differential operators as in the classical result \cite[Chapter XI, Theorem 2.1]{Taylor3}, which is different from the Fefferman-type result of this article. 
\end{rem}

\begin{ex}[The torus $\bbT^n$]\label{ex4}
    Let us deal in a little more detail with the case of the compact flat metric Riemannian manifold par excellence, the torus $\bbT^n$. It is well known that the projection map 
    \[
    \pi: \bbR^n\to \bbR^n/\bbZ^n \cong \bbT^n
    \]
    provides the torus a canonical (flat) metric $g_{ij}=\delta_{ij}$. Thus, immediately we have that the Levi-Civita connection $\nabla_{LC}$ in this case is flat (and symmetric) due to the well-known formula for the Christoffel symbols of any Levi-Civita connection:
    \[
    \Gamma_{jk}^l = g^{lr}(\partial_k g_{rj} + \partial_j g_{rk} - \partial_r g_{jk}). 
    \]
    Thus, we are in a position to consider classes $\Psi_{\rho, \delta}^{m}\left(\Omega^{\kappa}, \nabla_{LC}, \tau\right)$ for  $0 \leq \delta<\rho\leq 1$. Since the cotangent bundle $T^*\bbT^n$ is trivial, symbols will be just functions $a\in C^\infty(\bbT^n \times \bbR^n)$ satisfying
    \[
    \left|\partial_{\eta}^{\alpha} \partial_y^\beta a(y, \eta)\right| \leq \mathrm{const}_{K, \alpha, \beta}(1+|\eta|^2)^{(m+\delta |\beta|-\rho|\alpha|)/2},
    \]
    which resembles very much Definition \ref{defrn} on $\bbR^n$. Moreover, the associated pseudo-differential operators will take the form 
    \[
    Au(x)= \frac{1}{(2\pi)^n}\int_{\bbT^n} \int_{\bbR^n} e^{i(x-y)\cdot \zeta} a\left(x+\tau(x-y), \zeta\right) u(y) \,d\zeta dy,
    \]
    where the parameter $\tau\in[0,1]$ allows us to move along different types of quantization. In particular, if we stick to $\tau=0$ and $\kappa=0$, then the classes $\Psi_{\rho, \delta}^{m}\left(\Omega^{0}, \nabla_{LC}, 0\right)$ on $\bbT^n$ coincide with the classes of $1$-periodic pseudo-differential operators with Euclidean symbols first introduced by Agranovich \cite{agra} on the circle $\bbS^1$, and later on generalized to $\bbT^n$. The equivalence of these classes with the local H\"ormander classes $\Psi_{\rho, \delta, loc}^m(\bbT^n)$ was proven by McLean \cite{mcLean}, and later on the second author and Turunnen proved equivalence with their global toroidal classes $\operatorname{Op}(S_{\rho,\delta}^m(\bbT^n\times \bbZ^n))$ \cite{RuT}. 
\end{ex}

\subsection{Function spaces of \texorpdfstring{$\kappa$}{L}-densities}

In this subsection we briefly define the main spaces where we want to study the boundedness of pseudo-differential operators, and state some theorems regarding them that we will use in the next section. The definition of the $L^p$ spaces on densities is standard material (see e.g. \cite{foll}), but for the case of BMO, $H^1$ and Besov spaces it is not. For the latter spaces we have to mention the precise construction of Taylor in \cite{Taylor1}, where he defined local versions of BMO and $H^1$ on complete Riemannian manifolds of bounded geometry; and Triebel construction of Besov spaces also on complete Riemannian manifolds of bounded geometry\cite{Triebel}. These are exactly the spaces we will need for our proofs, but defined on $\kappa$-densities. This is not a difficult task because remembering that $\Omega^\kappa$ is a trivial line bundle, we can use our fixed global section $|dx|$ to transfer the local constructions on $0$-densities (smooth functions $C^\infty(M)$) to the desired spaces. Basically the process is as follows, assume $Y(M)$ is a well-defined space of functions on $M$ then we will define the $\kappa$-density analog $Y(M, \Omega^\kappa)$ as the space of $u\in \Omega^k$ such that $u |dx|^{-\kappa}\in Y(M)$. 

\subsubsection{\texorpdfstring{$L^p$}{L}, BMO and \texorpdfstring{$H^1$}{L} spaces}
In this subsection we assume that $(M,g)$ is a complete Riemannian manifold of bounded geometry. Using densities we can define naturally the intrinsic $L^p$ spaces on manifolds as follows: 
\begin{align*}
    L^p\left(M,\Omega^{1/p}\right) &:= \left\{ u\in \Omega^{1/p}: \left(\int_M |u|^p\right)^{1/p}<\infty \right\},\\
    L^\infty\left(M,\Omega^{0}\right) &:= \left\{ u\in \Omega^{0}: \operatorname{ess\:sup} |u| <\infty \right\}.
\end{align*}
Using our fixed section $|dx|$ of the line bundle $\Omega$ we can define those spaces for any $\kappa$-density:
\begin{align*}
    L^p\left(M,\Omega^{\kappa}\right) &:= \left\{ u \in \Omega^{\kappa}: \left(\int_M \left| u\,|dx|^{\frac{1}{p}-\kappa}\right|^p\right)^{1/p}<\infty \right\},\\
    L^\infty\left(M,\Omega^{\kappa}\right) &:= \left\{ u\in \Omega^{\kappa}: \operatorname{ess\:sup} |u\, |dx|^{-\kappa}| <\infty \right\}.
\end{align*}
\begin{rem}
    Notice that we do not need a Riemannian structure on $M$ to define the $L^p$ spaces, but it would not be the case for the forthcoming spaces. Although once the Riemannian metric is given one can locally write our fixed section as $|dx|=g(x)|dx^1\wedge\cdots\wedge dx^n|$, where from now on $g(x):=|\operatorname{det}g_{ij}(x)|$.  
\end{rem}
Now, we define the spaces $H^1$ and BMO for $\kappa$-densities following \cite{Taylor1}, which is also based in the local construction of Goldberg \cite{Gold} whose spaces were the first step to bring these type of spaces to manifolds. Let $d$ be the associated geodesic distance to the metric tensor $g$. Remember that we denote by $r_0>0$ the injectivity radius of $M$. One of the main consequences of the bounded geometry hypothesis is the existence of uniform partitions of unity, which are key to comparing local norms of function spaces with the global ones defined on manifolds (see e.g. \cite[Lemma 2.16]{bg}). 
\begin{lem}[Uniform partition of unity]\label{lemUnifPar}
    Let $(M,g)$ be a complete Riemannian manifold of bounded geometry. Then for $0<r_2<r_0$ small enough and any $0<r_1\leq r_2<r_0$, $M$ has a countable cover $\{B_{r_1}(x_i)\}_{i\in\bbN}$ and a corresponding smooth and bounded partition of unity $\{\psi_i\}_{i\in \bbN}$ with $\operatorname{supp}\psi_i\subset B_{r_2}(x_j)$ such that
        \begin{enumerate}
            \item $d(x_i,x_j)\geq r_1$ for all $i\neq j$;
            \item There exist a global constant $K$ such that for all $x\in M$ the ball $B_{r_2}(x)$ intersects at most $K$ of the $B_{r_2}(x_i)$.
        \end{enumerate}
\end{lem}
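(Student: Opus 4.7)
The plan is to build the cover by a standard maximal packing (Vitali-type) argument, use bounded geometry to upgrade it to a locally finite cover with a uniform multiplicity bound, and then manufacture the partition of unity from uniform bump functions transported through the exponential map.

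First I would apply Zorn's lemma (or a countable greedy construction, since $M$ is separable) to pick a maximal subset $\{x_i\}_{i\in\bbN}\subset M$ with the property that $d(x_i,x_j)\geq r_1$ whenever $i\neq j$; this immediately gives property $(1)$. By maximality, for every $x\in M$ there must be some $x_i$ with $d(x,x_i)<r_1$, for otherwise $\{x_i\}\cup\{x\}$ would still be $r_1$-separated. Hence $\{B_{r_1}(x_i)\}_{i\in\bbN}$ covers $M$. Countability follows from the fact that the open balls $B_{r_1/2}(x_i)$ are pairwise disjoint, and on a complete Riemannian manifold of bounded geometry each such ball has volume bounded below by a uniform positive constant, so any compact set meets only finitely many of them.

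Next I would establish the uniform overlap bound in $(2)$. Fix $x\in M$ and let $I(x):=\{i : B_{r_2}(x_i)\cap B_{r_2}(x)\neq\emptyset\}$. For $i\in I(x)$ the ball $B_{r_1/2}(x_i)$ lies inside $B_{2r_2+r_1/2}(x)$, and these balls are pairwise disjoint by property $(1)$. Bounded geometry (in particular, a uniform lower bound on Ricci curvature coming from the $C^\infty$-bound on $g_{ij}$ in normal coordinates) gives a uniform lower bound $c>0$ for the volume of $B_{r_1/2}(y)$ for any $y\in M$, and Lemma \ref{volGrowth} provides a uniform upper bound $V$ for $\operatorname{Vol}B_{2r_2+r_1/2}(x)$ independent of $x$. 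Therefore
\begin{equation*}
\#I(x)\,\cdot\, c \;\leq\; \sum_{i\in I(x)} \operatorname{Vol} B_{r_1/2}(x_i) \;\leq\; V,
\end{equation*}
so $K:=\lfloor V/c\rfloor$ works.

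For the partition of unity, I would fix a smooth bump $\chi\in C_c^\infty(\bbR^n)$ with $\chi\equiv 1$ on $B_{r_1}(0)$, $\operatorname{supp}\chi\subset B_{r_2}(0)$, and $0\leq\chi\leq 1$. Since $r_2<r_0$, the exponential map $\exp_{x_i}:B_{r_2}(0)\subset T_{x_i}M\to B_{r_2}(x_i)$ is a diffeomorphism, so one can set $\tilde\psi_i(y):=\chi(\exp_{x_i}^{-1}(y))$, extended by zero. Bounded geometry of $g_{ij}$ in normal coordinates translates into uniform $C^\infty$-bounds for $\tilde\psi_i$ independent of $i$. Then $\Sigma(y):=\sum_j \tilde\psi_j(y)$ is a locally finite sum (at most $K$ terms at any $y$), each term is uniformly smooth, and $\Sigma(y)\geq 1$ everywhere because the balls $B_{r_1}(x_i)$ still cover $M$ and $\tilde\psi_j\equiv 1$ on $B_{r_1}(x_j)$. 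Defining $\psi_i:=\tilde\psi_i/\Sigma$ yields the required partition of unity with $\operatorname{supp}\psi_i\subset B_{r_2}(x_i)$.

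The technical obstacle I expect is the uniformity of the $C^\infty$-bounds for $\psi_i$, not the mere existence of the cover: one needs to check that $\Sigma$ is uniformly bounded below and above and that its derivatives are uniformly bounded, which ultimately relies on the bounded geometry assumption controlling both the metric and the transition between overlapping normal coordinate charts (for the latter one typically invokes a standard comparison result, cf.\ the discussion in \cite{chee}). Once this is in place, the quotient $\tilde\psi_i/\Sigma$ automatically inherits uniform $C^\infty$-bounds, giving the "bounded" clause in the statement.
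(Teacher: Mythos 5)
The paper does not actually prove this lemma; it is quoted with a pointer to \cite[Lemma~2.16]{bg}, so there is no in-paper proof to compare against. Your argument is the standard one underlying that reference and it is correct: a maximal $r_1$-separated set gives the cover and property $(1)$; packing disjoint half-radius balls inside a slightly larger ball, together with the two-sided volume comparison coming from the $C^\infty$-bounds on $g_{ij}$ (lower bound for small geodesic balls, upper bound from Lemma~\ref{volGrowth}), gives the uniform multiplicity $K$ in $(2)$; and pushing a fixed bump $\chi$ through $\exp_{x_i}$ and normalizing by $\Sigma=\sum_j\tilde\psi_j$ gives the partition, with the bounded-geometry control of the metric in normal coordinates (and of the transition maps between overlapping normal charts) delivering the uniform $C^\infty$-bounds. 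You also correctly identify that the only genuinely delicate point is this uniformity.

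One small point worth flagging: your construction of $\chi$ with $\chi\equiv 1$ on $B_{r_1}(0)$ and $\operatorname{supp}\chi\subset B_{r_2}(0)$ tacitly requires $r_1<r_2$. The statement allows $r_1=r_2$, in which case a function that is positive on all of $B_{r_1}(0)$ cannot have support inside the open ball $B_{r_2}(0)=B_{r_1}(0)$, so this endpoint needs either a strict inequality or a mild reformulation (e.g.\ $\chi\equiv 1$ on $B_{r_1'}(0)$ for some $r_1'<r_1$ chosen so that the $B_{r_1'}(x_i)$ still cover, which is possible after a small preliminary fattening of the separation parameter). This is a borderline issue with the lemma's statement rather than a defect in your argument, and the paper in fact only uses the strict case $r_1=r_0/8<r_0/4=r_2$.
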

Let us fix such partition of unity $\{\psi_i\}_{i}$, and to fix ideas let us take $r_1=\frac{r_0}{8}$ and $r_2=\frac{r_0}{4}$. For more details about bounded geometry and uniform partitions of unity see e.g. \cite{chee,bg,Taylor1}. 

Let
$$B_\epsilon(x)= \{y\in M : d(x,y)<\epsilon\}$$
be the ball of radius $\epsilon$ centered at $x$ and we denote by 
$$|B_\epsilon(x)| = \int_{B_\epsilon(x)} |dx|$$
its volume. Then we define the average of a $\kappa$-density $u$ on $B_\epsilon(x)$ as 
$$\overline{u_\epsilon}(x)= \frac{1}{|B_\epsilon(x)|}\int_{B_\epsilon(x)} u\,|dx|^{1-\kappa}.$$
Note that, for a fixed $\epsilon$, this is a function $\overline{u_\epsilon}: M \to \mathbb{R}$, i.e. a $0$-density. Hence we define the BMO norm of a $\kappa$-density $u$ by 
\begin{equation}\label{defBMO}
    \|u\|_{\operatorname{BMO}} := \sup_{\substack{\epsilon<\frac{r_0}{4}\\ x\in M}} \frac{1}{|B_\epsilon(x)|} \int_{B_\epsilon(x)} |u(y)\, |dy|^{-\kappa}- \overline{u_\epsilon}(x)|\, |dy| + \sup_{x\in M}\frac{1}{|B_{\frac{r_0}{4}}(x)|} \int_{B_{\frac{r_0}{4}}(x)} |u\,|dx|^{1-\kappa}|,
\end{equation}
therefore we set 
$$\operatorname{BMO}(M, \Omega^\kappa): =\{u\in L^1_{loc}(M,\Omega^\kappa) : \|u\|_{\operatorname{BMO}}<\infty \}.$$
Let $\|\cdot\|_{\operatorname{Lip}}$ denote the norm of Lipschitz functions. On the other hand, let us consider the following set of functions associated to a ball $B_\epsilon(x)$:
\[
\mathcal{F}(B_\epsilon(x))= \left\{\varphi\in C_0^1(B_\epsilon(x)): \|\varphi\|_{\operatorname{Lip}}\leq \frac{1}{\epsilon^{n+1}}\right\}. 
\]
Given $u\in L^1_{loc}(M,\Omega^\kappa)$ let us consider the function
\[
\mathcal{G}_\epsilon u(x) = \sup_{\varphi\in \mathcal{F}(B_\epsilon(x))} \left|\int_{B_\epsilon(x)}\phi(y)u(y)|dy|^{-\kappa}\right|,
\]
so for such $u\in L^1_{loc}(M,\Omega^\kappa)$ we define the following maximal function:
\[
\mathcal{G}^b u(x) = \sup_{\epsilon\,\leq \frac{r_0}{4}} \mathcal{G}_\epsilon u(x). 
\]
Then we define the Hardy space in this setting as
$$H^1(M, \Omega^\kappa): =\{u\in L^1_{loc}(M,\Omega^\kappa) : \mathcal{G}^b u \in L^1(M) \}$$
with norm 
\[
\|u\|_{H^1}=\|\mathcal{G}^b u\|_{L^1}.
\]
\
\begin{rem}
Notice that if in  the previous definitions we take $\kappa=0$ (functions) we recover the exact definitions of \cite{Taylor1}. 
\end{rem}
We state the results concerning these spaces that we are going to need in our proofs, basically the corresponding generalizations of duality and interpolation involving the previous spaces. 
\begin{prop}[Proposition 4.1 in \cite{Taylor1}]\label{dualidadbmo}
    We have that 
    \[
    H^1(M,\Omega^\kappa)' = BMO(M,\Omega^{1-\kappa}).
    \]
\end{prop}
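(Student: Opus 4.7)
The strategy is to reduce everything to Taylor's scalar duality (the case $\kappa=0$) from \cite{Taylor1} by trivializing the $\kappa$-density bundle through the fixed global section $|dx|$ of $\Omega$. Concretely, define
\[
T_\kappa: u \mapsto \tilde u := u\,|dx|^{-\kappa}, \qquad T_{1-\kappa}: f \mapsto \tilde f := f\,|dx|^{-(1-\kappa)},
\]
so that $\tilde u$ and $\tilde f$ are honest scalar functions on $M$. First I would argue that both maps restrict to isometric isomorphisms $H^1(M,\Omega^\kappa)\cong H^1(M)$ and $\operatorname{BMO}(M,\Omega^{1-\kappa})\cong\operatorname{BMO}(M)$, where on the right-hand sides one takes the spaces constructed in \cite{Taylor1}.

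For the BMO identification, insert $u=\tilde u\,|dx|^\kappa$ into \eqref{defBMO}: by construction $u(y)|dy|^{-\kappa}=\tilde u(y)$ as a function, and $u\,|dx|^{1-\kappa}=\tilde u\,|dx|$ as a $1$-density, so both the oscillation term and the $L^1$-average over $B_{r_0/4}(x)$ reduce verbatim to the scalar quantities Taylor integrates. For the Hardy-space identification, the same substitution transforms the test integral $\int_{B_\epsilon(x)}\varphi(y)u(y)|dy|^{-\kappa}$ into Taylor's $\int_{B_\epsilon(x)}\varphi(y)\tilde u(y)\,|dy|$, so $\mathcal{G}^b u=\mathcal{G}^b \tilde u$ pointwise and the two $L^1$-norms coincide.

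Next I would match the duality pairings. The natural bilinear form $\Omega^\kappa\times\Omega^{1-\kappa}\to\bbR$ is integration of the $1$-density $u\cdot f$, and after trivialization
\[
\langle u,f\rangle = \int_M u\cdot f = \int_M \tilde u\,\tilde f\,|dx|
\]
becomes precisely the scalar pairing on which Taylor's duality is built. Consequently, any bounded functional $\ell$ on $H^1(M,\Omega^\kappa)$ transports via $T_\kappa$ to a bounded functional on $H^1(M)$; by Taylor's Proposition 4.1 it is represented by a unique $\tilde f\in\operatorname{BMO}(M)$, whose preimage $f=\tilde f\,|dx|^{1-\kappa}\in\operatorname{BMO}(M,\Omega^{1-\kappa})$ represents $\ell$ with equivalent norm. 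Conversely, every such $f$ manifestly yields a continuous functional, with norm controlled by $\|f\|_{\operatorname{BMO}}$, via the same chain of identifications.

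The only genuine work lies in the isometry verifications of the second paragraph, which amounts to tracking which powers of $|dx|$ appear where. This is why it is appropriate to credit the statement to \cite{Taylor1}: beyond the triviality of $\Omega$ and bookkeeping of density weights, no new analytic ingredient beyond the scalar case is required.
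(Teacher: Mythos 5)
Your proposal is correct and follows exactly the route the paper intends: the $\kappa$-density spaces $H^1(M,\Omega^\kappa)$ and $\operatorname{BMO}(M,\Omega^{1-\kappa})$ are, by the paper's own convention (stated just before this proposition), defined via the trivialization $u \mapsto u|dx|^{-\kappa}$, so the statement reduces to Taylor's scalar duality by transporting the isometries and matching the natural pairing $\int_M u\cdot f = \int_M \tilde u\,\tilde f\,|dx|$. The paper simply cites Taylor's Proposition 4.1 without spelling out the bookkeeping; your write-up supplies precisely that bookkeeping, so the approaches coincide.
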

\begin{prop}[Proposition 9.1 in \cite{Taylor1}]\label{interpolacion}
    Given $p\in(1,\infty)$, assume that
    \[
    T:L^p(M,\Omega^\kappa)\to L^p(M,\Omega^\kappa)
    \]
    is bounded. Assume also that 
    \[
    T:L^\infty(M,\Omega^\kappa)\to \operatorname{BMO}(M,\Omega^\kappa)
    \]
    is bounded. Then, for $q\in(p,\infty)$ we have that $T$ is bounded from $L^q(M,\Omega^\kappa)$ to $L^q(M,\Omega^\kappa)$. 
\end{prop}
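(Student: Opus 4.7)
The plan is to adapt the classical Fefferman-Stein extrapolation, which on $\bbR^n$ deduces $L^q$-boundedness from $L^p$-boundedness together with an $L^\infty\to \operatorname{BMO}$ bound, to the local setting of a complete Riemannian manifold of bounded geometry. The overall shape of the argument is: trivialize the density bundle, introduce a local sharp maximal function and prove a local Fefferman-Stein inequality, and then run the classical good-$\lambda$/distribution-function interpolation.

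First, I would use the fixed global section $|dx|$ of $\Omega$ to identify $L^r(M,\Omega^\kappa)$ with $L^r(M)$ isometrically via $u\mapsto u\,|dx|^{-\kappa}$, and likewise to identify $\operatorname{BMO}(M,\Omega^\kappa)$ with $\operatorname{BMO}(M)$ as defined in \eqref{defBMO}. This reduces the statement to the corresponding statement for ordinary functions, which is Taylor's original setting \cite{Taylor1}. Next, I would introduce the local sharp maximal function
\[
f^\#(x)=\sup_{\epsilon<r_0/4}\frac{1}{|B_\epsilon(x)|}\int_{B_\epsilon(x)}|f(y)-\overline{f_\epsilon}(x)|\,|dy|+\sup_{x\in M}\frac{1}{|B_{r_0/4}(x)|}\int_{B_{r_0/4}(x)}|f|\,|dx|,
\]
so that $\|f\|_{\operatorname{BMO}}=\|f^\#\|_{L^\infty}$. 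The heart of the argument is a local Fefferman-Stein inequality $\|f\|_{L^q}\leq C_q\|f^\#\|_{L^q}$, valid for $1<q<\infty$ once $f$ lies in some $L^{p_0}$ with $p_0\leq q$. To prove it, one establishes the usual good-$\lambda$ inequality relating the Hardy-Littlewood maximal function and $f^\#$, working only with balls of radius $<r_0/4$; the uniform partition of unity from Lemma \ref{lemUnifPar} lets one glue the resulting local estimates into a global one despite the possibly exponential volume growth of large balls (Lemma \ref{volGrowth}).

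With those tools in place, the interpolation argument follows the classical template. Given $f\in L^q(M)$ with $p<q<\infty$ and a level $\lambda>0$, split $f=g+b$ with $g=f\,\chi_{\{|f|\leq\lambda\}}$ and $b=f-g$. The hypothesis $T:L^\infty\to\operatorname{BMO}$ gives the pointwise bound $(Tg)^\#(x)\leq C\|g\|_{L^\infty}\leq C\lambda$, while $T:L^p\to L^p$ controls $Tb$ in $L^p$ in terms of $\|b\|_{L^p}$. Combining these through the distribution function of $(Tf)^\#$ and applying the local Fefferman-Stein inequality yields $\|Tf\|_{L^q}\leq C_q\|f\|_{L^q}$. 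An alternative route that also works is duality combined with Proposition \ref{dualidadbmo}: the hypotheses give $T^*:H^1\to L^1$ and $T^*:L^{p'}\to L^{p'}$, and complex interpolation between $H^1$ and $L^{p'}$ then produces $T^*:L^{q'}\to L^{q'}$ for every $q'\in(1,p')$, equivalently $T:L^q\to L^q$ for every $q\in(p,\infty)$.

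The main obstacle I expect is setting up the local good-$\lambda$ machinery uniformly on a non-compact manifold with possibly exponential volume growth: on $\bbR^n$ one exploits scale invariance, whereas here one must work patch by patch, control all constants uniformly via bounded geometry, and glue via the uniform partition of unity. Once this local Calderón-Zygmund framework is installed, the interpolation itself is essentially the Euclidean argument transplanted to the manifold setting.
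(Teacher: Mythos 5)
The paper does not supply its own proof of this statement; it imports it verbatim as Proposition~9.1 of Taylor~\cite{Taylor1}, so there is no internal argument to compare against. Your primary plan---trivialize $\Omega^\kappa$ via the fixed section $|dx|$, introduce the local sharp maximal function $f^\#$ built from balls of radius $<r_0/4$, establish a local good-$\lambda$ inequality between the Hardy--Littlewood maximal function and $f^\#$ with constants made uniform by the bounded-geometry hypothesis, deduce $\|f\|_{L^q}\lesssim\|f^\#\|_{L^q}$, and then run the classical height-$\lambda$ truncation $f=g+b$---is precisely the Fefferman--Stein extrapolation scheme that Taylor transplants to manifolds of bounded geometry, so this route is sound and consistent with the cited source.

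Your alternative via duality is conceptually tidy but, as written, has a subtle gap. From $T\colon L^\infty\to\operatorname{BMO}$ one does \emph{not} immediately get $T^*\colon H^1\to L^1$, because $(L^\infty)^*$ is strictly larger than $L^1$: what you obtain directly is only that $T^*u$ acts as a bounded functional on $L^\infty$ with norm $\lesssim\|u\|_{H^1}$, and concluding $T^*u\in L^1$ needs an extra density or atomic-decomposition step. Moreover, the identification of the complex interpolation space $(H^1,L^{p'})_{[\theta]}$ with $L^{q'}$ on a manifold of bounded geometry is itself a theorem on a par in difficulty with the Fefferman--Stein inequality you would otherwise prove; it is not available ``for free.'' So the duality route is a reorganization of the same ingredients rather than a shortcut, and if you take it you should be explicit about both of these points.
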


\subsubsection{Besov spaces}
Again, let $(M,g)$ be a complete Riemannian manifold of bounded geometry. We briefly recall Triebel's \cite{Triebel} construction of Besov spaces adapted to $\kappa$-densities. Let  $\{\psi_i\}_i$ be the uniform partition of unity as before and $\{x_i\}_i$ be the centers of the covering balls (as in Lemma \ref{lemUnifPar}). 
\begin{defn} Let $-\infty<s<\infty$. 
    \begin{enumerate}
        \item Let either $0<p<\infty$, $0<q\leq\infty$ or $p=q=\infty$. Then the Triebel-Lizorkin space is defined as
        \[F_{p,q}^s(M, \Omega^\kappa) = \left\{u\in \Omega^\kappa : \left(\sum_{j=1}^\infty\|\psi_j u|dx|^{-\kappa}(\operatorname{exp}_{x_j})\|_{F_{p,q}^s(\bbR^n)}^p\right)^{1/p}<\infty\right\}.\]
        \item  Let  $0<p\leq\infty$, $0<q\leq\infty$ and $-\infty<s_0<s<s_1<\infty$. Then the Besov space is defined via real interpolation
        \[B_{p,q}^s(M,\Omega^\kappa) = (F_{p,p}^{s_0}(M,\Omega^\kappa), F_{p,p}^{s_1}(M,\Omega^\kappa))_{\Theta, q}\]
        with $s=(1-\Theta)s_0 + \Theta s_1$. 
    \end{enumerate}
\end{defn}
\begin{rem}
    These spaces are of course independent of the choose of partition of unity. Moreover, Triebel proved an equivalent description using heat kernels. 
\end{rem}
For the boundedness of pseudo-differential operators on Besov spaces we will need the following important results by Triebel, which resemble classical properties on $\bbR^n$:

\begin{thm}[Theorem 4 in \cite{Triebel}]
    Let $1<p<\infty$ and $-\infty <s < \infty$. Then
    \[
    L_s^p(M,\Omega^\kappa)=F_{p,2}^s(M,\Omega^\kappa). 
    \]
\end{thm}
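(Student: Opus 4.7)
The plan is to reduce the statement to the classical Euclidean identification $L_s^p(\bbR^n)=F_{p,2}^s(\bbR^n)$ (valid for $1<p<\infty$ and $s\in\bbR$) and then to globalize it using the uniform partition of unity from Lemma \ref{lemUnifPar}. The first step is a trivialization: the fixed global section $|dx|$ yields an isometric isomorphism $u\mapsto u\,|dx|^{-\kappa}$ between $L_s^p(M,\Omega^\kappa)$ and $L_s^p(M,\Omega^0)$, and by the very definition of all function spaces of $\kappa$-densities used in this paper (which are built by transport through $|dx|^{-\kappa}$), also between $F_{p,2}^s(M,\Omega^\kappa)$ and $F_{p,2}^s(M,\Omega^0)$. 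Hence it suffices to treat the scalar case $\kappa=0$.

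Next I would work locally in the normal-coordinate charts $(\exp_{x_j},B_{r_2}(x_j))$ of Lemma \ref{lemUnifPar}. The bounded geometry hypothesis (Definition \ref{boundedGeometry}) ensures that $\exp_{x_j}$, its inverse, the Jacobian of $|dx|$ in these normal coordinates, and the cut-offs $\psi_j$ are all bounded in every $C^k$ norm with constants independent of $j$; together with the finite-overlap property (2) of Lemma \ref{lemUnifPar}, this means that the piece-wise Euclidean norms
\[
\|(\psi_j u)\circ\exp_{x_j}\|_{F_{p,2}^s(\bbR^n)}, \qquad \|(\psi_j u)\circ\exp_{x_j}\|_{L_s^p(\bbR^n)}
\]
are equivalent in $\ell^p(j)$ to the corresponding intrinsic norms $\|u\|_{F_{p,2}^s(M,\Omega^0)}$ and $\|u\|_{L_s^p(M,\Omega^0)}$ respectively, with comparison constants uniform in $j$. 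The $F_{p,2}^s$ direction is exactly the definition; the $L_s^p$ direction uses standard localization for Bessel potentials, i.e., that multiplication by $\psi_j$ is a uniformly bounded operator on $L_s^p(\bbR^n)$, plus the bounded change-of-variables formula for $\exp_{x_j}$.

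Then I would apply the classical Euclidean identification $L_s^p(\bbR^n)=F_{p,2}^s(\bbR^n)$ to each piece $(\psi_j u)\circ\exp_{x_j}$, which carries a single universal constant. Summing the resulting pointwise-in-$j$ equivalences in $\ell^p$ and chaining them with the two comparisons from the previous paragraph produces the desired equality of norms on $M$, in both directions.

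The main obstacle is to verify that every constant in the local-to-global comparisons, and in the multiplication bound for the $\psi_j$ on $L_s^p(\bbR^n)$, can be chosen independently of the chart index $j$; this is precisely the role of the bounded geometry assumption together with the uniform partition of unity. Without this uniform control over an infinite atlas the piecewise identification, although still true chart by chart, would fail to glue into a global norm equivalence on $M$.
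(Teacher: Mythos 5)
The paper does not prove this statement; it simply cites it as Theorem~4 of Triebel~\cite{Triebel}, so there is no ``paper's own proof'' to compare against. Your sketch is, in broad outline, close to how such results are established in the literature (localize with a uniform partition of unity, invoke the Euclidean Littlewood--Paley identification $L_s^p(\bbR^n)=F_{p,2}^s(\bbR^n)$, and glue using finite overlap), and the reductions you carry out explicitly --- the trivialization to $\kappa=0$ via $|dx|^{-\kappa}$, the finite-overlap $\ell^p$ bookkeeping, the tautological identification on the $F_{p,2}^s$ side from its definition --- are all sound.

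However, there is a genuine gap in the step you dismiss as ``standard localization for Bessel potentials.'' The space $L_s^p(M,\Omega^\kappa)$ in this paper is defined intrinsically through the Laplace--Beltrami operator: $\mathfrak{B}=(1-L)^{1/2}$ and $u\in L_s^p(M)$ iff $\mathfrak{B}^s u\in L^p(M)$. The assertion that
\[
\|\mathfrak{B}^s u\|_{L^p(M)} \;\asymp\; \Bigl(\sum_j \|(\psi_j u)\circ\exp_{x_j}\|_{L_s^p(\bbR^n)}^p\Bigr)^{1/p}
\]
is precisely the nontrivial content of the theorem, not a routine consequence of ``multiplication by $\psi_j$ is bounded on $L_s^p(\bbR^n)$ plus change of variables.'' The operator $(1-L)^{s/2}$ is nonlocal and does not commute with multiplication by $\psi_j$; pulling it back by $\exp_{x_j}$ does not yield $(1-\Delta)^{s/2}$ but a pseudo-differential operator with variable coefficients coming from the metric in normal coordinates. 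Establishing the displayed equivalence requires either a uniformly controlled parametrix construction for $(1-L)^{s/2}$ across the infinite atlas (exploiting the $C^\infty$-bounds on $g_{ij}$ from bounded geometry), commutator estimates of the form $\|[\psi_j,\mathfrak{B}^s]u\|_{L^p}\lesssim\|u\|_{L_{s-1}^p}$ with constants independent of $j$, or a heat-kernel characterization of $L_s^p(M)$ as in Triebel's actual argument. Your proposal gestures at the right geometry (normal coordinates, uniform $C^k$ bounds, finite overlap) but asserts rather than proves the one comparison that cannot be obtained from Euclidean localization alone.
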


\begin{thm} [Theorem 5 in \cite{Triebel}]
    Let $0<p<\infty, 0<q_0\leq \infty, 0<q_1\leq \infty$ and $-\infty< s_0 <s_1 < \infty$. Let $0<q\leq \infty$,  $0<\Theta<1$, and 
    \[s = (1-\Theta)s_0+\Theta s_1. \]
    Then 
    \[
    (B_{p,q_0}^{s_0}(M,\Omega^\kappa), B_{p,q_1}^{s_1}(M,\Omega^\kappa))_{\Theta, q} = (F_{p,q_0}^{s_0}(M,\Omega^\kappa), F_{p,q_1}^{s_1}(M,\Omega^\kappa))_{\Theta, q} = B_{p,q}^s(M,\Omega^\kappa).
    \]
\end{thm}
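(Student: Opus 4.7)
The statement is labelled as Triebel's Theorem 5 in \cite{Triebel}, but stated here on the bundle $\Omega^\kappa$ rather than on functions. My plan is therefore to deduce the $\kappa$-density version from the original statement on scalar functions via the global trivialization $u\mapsto u\,|dx|^{-\kappa}$ of the line bundle $\Omega^\kappa$, and then appeal to the functoriality of the real interpolation method.

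\textbf{Step 1: Trivialization as an isometric isomorphism.} Because the fixed section $|dx|\in\Gamma(\Omega)$ is nowhere-vanishing, the multiplication map
\[
T_\kappa:\Gamma(\Omega^\kappa)\longrightarrow C^\infty(M),\qquad T_\kappa u = u\,|dx|^{-\kappa},
\]
is a linear bijection. The very definitions adopted in the excerpt, namely
\[
\|u\|_{F^s_{p,q}(M,\Omega^\kappa)}=\Bigl(\sum_j\|\psi_j (T_\kappa u)\circ\exp_{x_j}\|_{F^s_{p,q}(\bbR^n)}^p\Bigr)^{1/p},
\]
and the corresponding one for $B^s_{p,q}(M,\Omega^\kappa)$ via real interpolation, show at once that $T_\kappa$ extends to an \emph{isometric isomorphism}
\[
T_\kappa:F^s_{p,q}(M,\Omega^\kappa)\xrightarrow{\ \sim\ }F^s_{p,q}(M),\qquad T_\kappa:B^s_{p,q}(M,\Omega^\kappa)\xrightarrow{\ \sim\ }B^s_{p,q}(M),
\]
for every admissible choice of parameters. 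In particular $T_\kappa$ and $T_\kappa^{-1}$ act boundedly between the two elements of each Banach couple $(B^{s_0}_{p,q_0},B^{s_1}_{p,q_1})$ and $(F^{s_0}_{p,q_0},F^{s_1}_{p,q_1})$ on $M$ and on $(M,\Omega^\kappa)$.

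\textbf{Step 2: Transport via functoriality of real interpolation.} The real interpolation functor $(\,\cdot\,,\,\cdot\,)_{\Theta,q}$ is exact of exponent $\Theta$, so any bounded morphism of compatible Banach couples induces a bounded morphism between the corresponding interpolation spaces, with norm bounded by the norms on the endpoints; applied to $T_\kappa$ and $T_\kappa^{-1}$, this yields isometric identifications
\[
(B^{s_0}_{p,q_0}(M,\Omega^\kappa),B^{s_1}_{p,q_1}(M,\Omega^\kappa))_{\Theta,q}\cong (B^{s_0}_{p,q_0}(M),B^{s_1}_{p,q_1}(M))_{\Theta,q},
\]
and similarly for the $F$-couple. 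Applying Triebel's Theorem 5 on the right-hand side to identify both interpolation spaces with $B^s_{p,q}(M)$, and then pulling back via $T_\kappa^{-1}$, produces the asserted chain of equalities with $B^s_{p,q}(M,\Omega^\kappa)$ in the middle.

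\textbf{Main obstacle.} There is no substantial analytic obstacle beyond checking that the trivialization $T_\kappa$ genuinely respects the localization procedure $\{\psi_j,\exp_{x_j}\}$: concretely, that $T_\kappa(\psi_j u) = \psi_j T_\kappa u$, which holds because the $\psi_j$ are scalar cut-offs acting pointwise and $|dx|^{-\kappa}$ is independent of $j$. The genuinely nontrivial content of the theorem is the Euclidean-to-manifold transfer, which is Triebel's original work \cite{Triebel}, and the verification that real interpolation distributes over the uniform partition of unity in the sense that one can treat the local Euclidean pieces independently and reassemble them; this relies on the uniform finite-overlap property in Lemma \ref{lemUnifPar}, guaranteed by the bounded geometry hypothesis. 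Once this is accepted for scalar functions (which is Triebel's statement that we quote), the passage to $\kappa$-densities is purely formal through $T_\kappa$.
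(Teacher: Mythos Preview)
Your proposal is correct and matches the paper's approach exactly: the paper does not give a standalone proof of this statement but simply cites it from Triebel and relies on the transfer principle stated just before the function-space definitions, namely that $Y(M,\Omega^\kappa)$ is defined as those $u$ with $u|dx|^{-\kappa}\in Y(M)$, which is precisely your map $T_\kappa$. Your Step~1 and Step~2 spell out in detail what the paper leaves implicit in that one sentence.
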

As an immediate corollary of these two theorems we get the following result. 
\begin{cor}
\label{interoplacionSobolevBesov}
     Let $1<p<\infty$ and $-\infty< s_0 <s_1 < \infty$. Let $0<q\leq \infty$, $0<\Theta<1$, and 
    \[s = (1-\Theta)s_0+\Theta s_1. \]
    Then
    \[
    (L_{s_0}^p(M,\Omega^\kappa), L_{s_1}^p(M,\Omega^\kappa))_{\Theta, q} = B_{p,q}^s(M,\Omega^\kappa).
    \]
\end{cor}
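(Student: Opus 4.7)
The proof is essentially a substitution — the genuine analytic work is already contained in the two Triebel theorems quoted immediately above, so my role is simply to glue them together. First I would invoke Triebel's identification $L_s^p(M,\Omega^\kappa)=F_{p,2}^s(M,\Omega^\kappa)$ (Theorem 4), which applies precisely in the range $1<p<\infty$ of our hypothesis. Using it at the two regularity levels $s_0$ and $s_1$, I rewrite the compatible couple $(L_{s_0}^p(M,\Omega^\kappa), L_{s_1}^p(M,\Omega^\kappa))$ as $(F_{p,2}^{s_0}(M,\Omega^\kappa), F_{p,2}^{s_1}(M,\Omega^\kappa))$.

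Next I would apply the real interpolation functor $(\,\cdot\,,\,\cdot\,)_{\Theta,q}$. Since the identifications above are isomorphisms of Banach spaces, the interpolation spaces are the same, giving
\[
(L_{s_0}^p(M,\Omega^\kappa), L_{s_1}^p(M,\Omega^\kappa))_{\Theta, q} = (F_{p,2}^{s_0}(M,\Omega^\kappa), F_{p,2}^{s_1}(M,\Omega^\kappa))_{\Theta, q}.
\]
Finally, Theorem 5 with $q_0=q_1=2$ and the given $s=(1-\Theta)s_0+\Theta s_1$ evaluates the right-hand side as $B_{p,q}^s(M,\Omega^\kappa)$, which is exactly the identification claimed.

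There is no substantial obstacle here; the only thing to double-check is that all parameter constraints of the two cited theorems are satisfied simultaneously. We have $1<p<\infty$ by assumption (Theorem 4 applies), and for Theorem 5 we need $0<p<\infty$, $0<q_0,q_1\leq\infty$, $0<q\leq\infty$, $0<\Theta<1$, and $-\infty<s_0<s_1<\infty$ — all of which are either hypotheses of the corollary or trivially true for the choice $q_0=q_1=2$. Thus the two theorems compose with no compatibility issues, and the corollary follows immediately.
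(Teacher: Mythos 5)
Your proof is correct and is exactly the paper's (unstated) argument: apply Theorem 4 to rewrite $L_{s_i}^p$ as $F_{p,2}^{s_i}$, then evaluate the real interpolation space via Theorem 5 with $q_0=q_1=2$. The parameter checks you flag are indeed the only things worth verifying, and they are satisfied.
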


\subsection{Bessel potentials}\label{BesselPotentials} In this subsection we illustrate how we can use the classical Bessel potentials for the estimation of norms of Safarov pseudo-differential operators. 
Let  $(M,g)$ be a complete Riemannian manifold, recall that the Laplace-Beltrami operator associated to the metric $g$ is given by 
\[
Lu(x) = \frac{1}{g(x)}\partial_{x^i}\left(g(x) g^{ij}(x)\partial_{x^j}u(x)\right),
\]
where $g_{ij}$ and $g^{ij}$ are the metric and inverse metric coefficients, respectively, and $g(x):=|\operatorname{det}g_{ij}(x)|$. Notice that this operator can be naturally defined on $\kappa$-densities (actually in any vector bundle). We define the Bessel potential as follows:
\[
\mathfrak{B} = (1-L)^{1/2},
\]
and we are interested in determining the classes of pseudo-differential operators to which its powers belong to, i.e. we want to know when $\mathfrak{B}^\lambda\in \Psi_{\rho, \delta}^{m}\left(\Omega^{\kappa}, \nabla\right)$ for $\lambda\in \bbR$. It is well known that for the classical classes $\mathfrak{B}^\lambda\in \Psi_{1, 0, loc}^{\lambda}\left(\Omega^{\kappa}\right)$ for any $\lambda\in \bbR$, thus $\mathfrak{B}^\lambda\in \Psi_{1, 0}^{\lambda}\left(\Omega^{\kappa},\nabla\right)$ for any connection $\nabla$  and any $\lambda\in \bbR$ since Safarov classes coincide with the classical ones when $\rho>1/2$. Therefore, the obvious inclusion $\Psi_{1, 0}^{m}\left(\Omega^{\kappa}, \nabla\right)\subseteq \Psi_{\rho, \delta}^{m}\left(\Omega^{\kappa}, \nabla\right)$ for $0\leq \delta<\rho\leq 1$ give us that 
\begin{equation}\label{besselEnSafarov}
    \mathfrak{B}^\lambda\in \Psi_{\rho, \delta}^{\lambda}\left(\Omega^{\kappa}, \nabla\right) \text{ for all } \nabla \text{ and }\lambda\in \bbR. 
\end{equation}
Now, combining the latter result with Theorem \ref{producto}  will allow us to transfer norm estimates of pseudo-differential operators to the ones of the Bessel potentials. 
\begin{rem}
\label{NormaBessel}
    Let $(X, \|\cdot\|_X), (Y, \|\cdot\|_Y)$ be Banach spaces and suppose $A\in \Psi_{\rho, \delta}^{m}\left(\Omega^{\kappa}, \nabla\right)$ is defined on them, i.e. $A: X\to Y$ is well-defined. Hence for $x\in X$ we have that
    \[
    \|Ax\|_Y = \|(A\mathfrak{B}^{-m})\circ \mathfrak{B}^{m}x\|_Y \leq \|A\mathfrak{B}^{-m}\|_{op} \|\mathfrak{B}^{m}x\|_Y,
    \]
where $A\mathfrak{B}^{-m}$ is a pseudo-differential operator of order zero by Theorem \ref{L2bound} and $\|\cdot\|_{op}$ is the operator norm. In this way we could transform the problem to know the estimation of zero order operators and Bessel potentials.
\end{rem}
To close this section, we quickly rewrite the Sobolev embedding theorem in terms of the boundedness of Bessel potentials. On complete Riemannian manifolds of bounded geometry the Sobolev embedding theorem  states (see e.g. \cite{her}) that for any real numbers $1\leq p<q$, $0\leq m<k$, satisfying 
\begin{equation}\label{sobEquaOrd}
    k-m = n\left(\frac{1}{p}-\frac{1}{q}\right),
\end{equation} one has the continuous inclusion $L_k^p\subset L_m^q$ of Sobolev spaces. Of course the latter result can be rewritten in terms of Bessel potentials, and moreover using \cite[Theorem 4.2]{stri} and Remark \ref{NormaBessel} one can replace the equality in condition \eqref{sobEquaOrd} by an inequality. The corresponding statement is as follows:
\begin{thm}
\label{sobolev}
    Let $1\leq p<q$ and $s\in\bbR$. Then $\mathfrak{B}^{-s}$ is bounded from $L^p(M)$ to $L^q(M)$ if 
    \[
    n\left(\frac{1}{p}-\frac{1}{q}\right) \leq s.
    \]
\end{thm}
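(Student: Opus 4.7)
The plan is to reduce the statement to the classical Sobolev embedding cited in the paragraph just above, by using the Bessel-potential factorization mechanism of Remark \ref{NormaBessel}. Set $s_{0} := n(1/p - 1/q)$; since $p < q$, one has $s_{0} > 0$, and the hypothesis $n(1/p - 1/q) \leq s$ gives $t := s - s_{0} \geq 0$. The key operator identity, coming from the functional calculus of the self-adjoint operator $\mathfrak{B} = (1-L)^{1/2}$, is
\[
\mathfrak{B}^{-s} = \mathfrak{B}^{-s_{0}} \circ \mathfrak{B}^{-t}.
\]

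First I would translate the classical Sobolev embedding $L^{p}_{s_{0}}(M) \hookrightarrow L^{q}(M)$ into Bessel language. By the very definition of $L^{p}_{s_{0}}$ as the space of $u$ with $\mathfrak{B}^{s_{0}} u \in L^{p}$, the embedding is equivalent to the $L^{p} \to L^{q}$ boundedness of $\mathfrak{B}^{-s_{0}}$: for any $f\in L^{p}$, setting $u := \mathfrak{B}^{-s_{0}} f \in L^{p}_{s_{0}}$ yields
\[
\|\mathfrak{B}^{-s_{0}} f\|_{L^{q}} = \|u\|_{L^{q}} \leq C\,\|u\|_{L^{p}_{s_{0}}} = C\,\|\mathfrak{B}^{s_{0}} u\|_{L^{p}} = C\,\|f\|_{L^{p}}.
\]
This handles the equality case $s = s_{0}$.

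To cover the general inequality case $s > s_{0}$, I would apply the factorization and chain the two estimates:
\[
\|\mathfrak{B}^{-s} f\|_{L^{q}} = \|\mathfrak{B}^{-s_{0}}(\mathfrak{B}^{-t} f)\|_{L^{q}} \leq C\,\|\mathfrak{B}^{-t} f\|_{L^{p}} \leq C'\,\|f\|_{L^{p}},
\]
where the first inequality is the case $s = s_{0}$ just established (applied to $\mathfrak{B}^{-t} f$), and the second is the $L^{p}$-boundedness of $\mathfrak{B}^{-t}$ for $t \geq 0$, which is the content of \cite[Theorem 4.2]{stri}. This two-step argument is precisely the norm-transfer mechanism of Remark \ref{NormaBessel}, applied with $X = L^{p}$, $Y = L^{q}$, $A = \mathfrak{B}^{-s}$, and exponent shift $-s_{0}$.

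The only delicate point I anticipate is the endpoint $p = 1$, where $L^{1}$-boundedness of a non-positive-order pseudo-differential operator is not a routine consequence of Calder\'on--Zygmund theory. For pure Bessel potentials $\mathfrak{B}^{-t}$ with $t \geq 0$, however, the Schwartz kernel is an integrable function on manifolds of bounded geometry (through heat-kernel representation and Gaussian estimates), so the $L^{1}$ bound still holds; this is exactly what \cite[Theorem 4.2]{stri} provides, and it is the reason why both ingredients are invoked in the paragraph immediately preceding the statement.
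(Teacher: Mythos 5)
Your proposal reconstructs exactly the argument the paper indicates in the paragraph preceding the theorem: the equality case $s=s_0$ is the Bessel-potential reformulation of the classical Sobolev embedding $L^p_{s_0}\hookrightarrow L^q$ from \cite{her}, and the strict-inequality case is handled by factoring off $\mathfrak{B}^{-t}$ with $t=s-s_0\geq 0$ and invoking \cite[Theorem 4.2]{stri} together with the norm-transfer mechanism of Remark \ref{NormaBessel}. The factorization $\mathfrak{B}^{-s}=\mathfrak{B}^{-s_0}\circ\mathfrak{B}^{-t}$ and your remark on the $p=1$ endpoint are faithful expansions of the paper's compressed statement, so the approach is the same and correct.
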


\section{Main result and consequences}
\label{ultima}
In this section we prove the main results presented in the introduction, namely the $L^p-L^p$ boundedness of pseudo-differential operators belonging to Safarov global classes. Which together with the stated theorems of Section \ref{prelimi} will imply boundedness on Sobolev and Besov spaces, and some particular $L^p-L^q$ boundedness. In order not to overload the notation, sometimes, we are going to omit the dependence of $\Omega^\kappa$ on spaces, but remember that we are always dealing with $\kappa$-densities. Henceforth, given a symbol $a\in S_{\rho, \delta}^m(\nabla)$ we denote by $a(X,D)$ the corresponding pseudo-differential operator in the class $\Psi_{\rho, \delta}^{m}\left(\Omega^{\kappa}, \nabla\right)$. Henceforth, we assume that $(M,g,\nabla)$ is a complete Riemannian manifold of bounded geometry (see Definition \ref{boundedGeometry}), so remember that $r_0>0$ refers to the injectivity radius of $M$ and $g(x)$ is denoting the function $\sqrt{|\operatorname{det}g_{ij}(x)|}$. 
\subsection{\texorpdfstring{$L^p$}{L}-boundedness}
We follow the strategy of the $\bbR^n$ case proof done by Fefferman, namely we prove the $L^p$-boundedness for a particular order $\theta$ via interpolation between $L^2-L^2$ and $L^\infty-\operatorname{BMO}$, and then extend the the range of $\theta$ via Stein's complex interpolation. Key tools of the proof are Lemma \ref{lem2} and Theorem \ref{bmo}, which allow us to control the problem on annular neighborhoods appearing in partitions of unity. We often use compactness in the fact that it implies bounded geometry, so one can handle Christoffel symbols and metric coefficients. 

We start with a simple estimate, a consequence of the H\"older inequality. 
\label{S3}
\begin{lem}
\label{lem1}
Let $A\in \Psi_{\rho, \delta}^{m}\left(\Omega^{\kappa}, \nabla\right)$ such that its Schwartz kernel satisfies
\[
    \underset{x\in M}{\operatorname{ess\:sup}}\|\mathscr{A}(x, \cdot)\|_{L^1(M)} <\infty.
\]
Then $A$ extends to a bounded operator from $L^\infty(M,\Omega^\kappa)$ to $L^\infty(M,\Omega^\kappa)$, and 
\[
\|A u\|_{L^\infty(M)}\leq \underset{x\in M}{\operatorname{ess\:sup}}\|\mathscr{A}(x, \cdot)\|_{L^1(M)}\| u\|_{L^\infty(M)}, \quad \forall u\in L^\infty(M).
\]
\end{lem}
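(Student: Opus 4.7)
The plan is to unpack the Schwartz kernel representation, apply the triangle inequality inside the integral, and take essential suprema in $x$; the main subtlety is that we are working with $\kappa$-densities, so we must carefully track how the fixed trivialization $|dx|$ enters both the definition of $L^\infty(M,\Omega^\kappa)$ and the interpretation of $\mathscr{A}(x,\cdot) \in L^1(M)$.

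First, I would start with a $u \in \Gamma_c(\Omega^\kappa)$, for which $Au$ is defined by the kernel representation
\begin{equation*}
Au(x) = \int_M \mathscr{A}(x,y)\, u(y)\, |dy|,
\end{equation*}
as in Remark \ref{kernels}. Writing $u(y) = (u(y)|dy|^{-\kappa})\,|dy|^\kappa$ and $Au(x) = (Au(x)|dx|^{-\kappa})\,|dx|^\kappa$, the triangle inequality in the integrand yields
\begin{equation*}
\bigl|Au(x)\,|dx|^{-\kappa}\bigr| \;\leq\; \bigl\|u\,|dy|^{-\kappa}\bigr\|_{L^\infty(M)} \int_M |\mathscr{A}(x,y)|\, |dy| \;=\; \|u\|_{L^\infty(M,\Omega^\kappa)}\,\|\mathscr{A}(x,\cdot)\|_{L^1(M)},
\end{equation*}
where on the right the kernel is viewed, via the fixed trivialization, as a function so that its $L^1$-norm in $y$ makes sense. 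Taking the essential supremum in $x$ gives the announced pointwise estimate with constant $\underset{x\in M}{\operatorname{ess\,sup}}\|\mathscr{A}(x,\cdot)\|_{L^1(M)}$.

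Next comes the extension to all of $L^\infty(M,\Omega^\kappa)$. Because $\Gamma_c(\Omega^\kappa)$ is \emph{not} dense in $L^\infty$, one cannot simply extend by continuity; instead I would define the extended operator $\widetilde{A}u$ directly, for any $u\in L^\infty(M,\Omega^\kappa)$, by the absolutely convergent integral
\begin{equation*}
\widetilde{A}u(x) := \int_M \mathscr{A}(x,y)\, u(y)\, |dy|.
\end{equation*}
Convergence for a.e.\ $x$ follows from the hypothesis $\underset{x}{\operatorname{ess\,sup}}\|\mathscr{A}(x,\cdot)\|_{L^1} < \infty$ together with $\bigl|u(y)|dy|^{-\kappa}\bigr| \leq \|u\|_{L^\infty}$ a.e.\ $y$. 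The same estimate as before then yields the stated operator norm bound, and $\widetilde{A}$ agrees with $A$ on $\Gamma_c(\Omega^\kappa)$, so it is the desired extension.

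There is no real obstacle here beyond bookkeeping: the argument is essentially Minkowski/Hölder (actually just the trivial $L^1$–$L^\infty$ duality in the kernel variable), and the only care needed is in the density bookkeeping via $|dx|^\kappa$. The lemma is stated as a warm-up, and its role in the sequel will be to provide the trivial half of an interpolation argument against the $L^2$-boundedness of Theorem \ref{L2bound}.
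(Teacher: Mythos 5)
Your proof is correct and takes essentially the same route as the paper's, which is a one-line application of Hölder's inequality ($L^1$--$L^\infty$ duality in the kernel variable) followed by taking the essential supremum in $x$. You add a worthwhile clarification the paper elides: since $\Gamma_c(\Omega^\kappa)$ is not dense in $L^\infty$, the ``extension'' must be defined by the absolutely convergent integral rather than by continuity.
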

\begin{proof}
Using the H\"older inequality we get
$$|A u(x)| = \left|\int_M \skernel(x,y) u(y) |dy| \right|\leq \int_M\left| \skernel(x,y)u(y) \right| |dy|\leq \|\skernel(x,\cdot)\|_{L^1(M)}\|u\|_{L^\infty(M)}.$$ 
Taking the supremum, the result follows.
\end{proof}
Let us briefly remind the content of Remark \ref{kernels}. For $a\in S_{\rho,\delta}^m(\nabla)$, the associated pseudo-differential operator $a(X,D)$ can be written as the local part plus the global part
\[
a(X,D) = a(X,D)_{loc} + a(X,D)_{glo},
\]
so to have global estimates we will require Assumption \ref{assu} on the global part. Indeed, let us handle the global part as follows: 
\begin{lem}\label{lemGlob}
    Let $a\in S_{\rho,\delta}^m(\nabla)$ with $0\leq\delta<\rho\leq 1$, and let $1\leq p\leq \infty$. Suppose that the kernel of $a(X,D)_{glo}$ satisfies Assumption \ref{assu}, then $a(X,D)_{glo}$ is bounded from $L^p(M,\Omega^\kappa)$ to $L^p(M,\Omega^\kappa)$.
\end{lem}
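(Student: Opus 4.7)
The natural route is Schur's test. If I can show
\begin{equation*}
\sup_{x\in M}\int_M |K(x,y)|\,|dy| \le A \quad\text{and}\quad \sup_{y\in M}\int_M |K(x,y)|\,|dx| \le A,
\end{equation*}
then $a(X,D)_{glo}$ extends boundedly to $L^p(M)$ for every $1\le p\le\infty$ with operator norm at most $A$. Passing to $L^p(M,\Omega^\kappa)$ is harmless because the identification with $L^p(M)$ is via multiplication by the smooth nowhere-vanishing factor $|dx|^{\frac{1}{p}-\kappa}$, which is compatible with bounded geometry. So the whole proof reduces to verifying a uniform Schur bound on $K$.

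To bound $\int_M |K(x,y)|\,|dy|$ I split into the near zone $\{d(x,y)\le 1\}$ and the far zone $\{d(x,y)\ge 1\}$. On the near zone $K$ is smooth and, since it is supported outside a neighborhood of the diagonal, the local symbol estimates of Definition \ref{defSym} together with bounded geometry (uniform control of $g_{ij}$ and $\Gamma_{ij}^k$ in normal coordinates) give a uniform bound $|K(x,y)|\le C$ for all $x,y$ with $d(x,y)\le 1$. Combined with the uniform estimate $\mathrm{Vol}\,B_1(x)\le C_0\cdot 2^{\mu_0}e^{K_0}$ coming from Lemma \ref{volGrowth}, the near zone contributes a finite amount independent of $x$.

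The far zone is where Assumption \ref{assu} does the work. I decompose annularly into $A_k:=\{2^k\le d(x,y)<2^{k+1}\}$, $k\ge 0$, and invoke Lemma \ref{volGrowth} to bound $\mathrm{Vol}\,A_k\le C_0(1+2^{k+1})^{\mu_0}e^{K_0 2^{k+1}}$. In the case $K_0>0$ this yields
\begin{equation*}
\int_{d(x,y)\ge 1}|K(x,y)|\,|dy| \le C\sum_{k=0}^{\infty} e^{-W 2^{k}}(1+2^{k+1})^{\mu_0}e^{K_0 2^{k+1}},
\end{equation*}
which converges because $W>K_0$. In the case $K_0=0$ the same decomposition gives
\begin{equation*}
\int_{d(x,y)\ge 1}|K(x,y)|\,|dy| \le C\sum_{k=0}^{\infty} 2^{-kL}(1+2^{k+1})^{\mu_0},
\end{equation*}
which converges since $L>\mu_0+n>\mu_0$. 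Both bounds are uniform in $x$.

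For the second Schur bound I swap the roles of $x$ and $y$; since $d(x,y)=d(y,x)$, Assumption \ref{assu} gives exactly the same pointwise estimates on $K(x,y)$ when integrated in $x$, and the argument above applies verbatim. Schur's test then delivers $L^p$-boundedness for all $1\le p\le\infty$. The main (and only real) obstacle is the far zone estimate, which is precisely why Assumption \ref{assu} was calibrated so that the exponential/polynomial decay rate of $K$ dominates the worst-case volume growth from Lemma \ref{volGrowth}; once this calibration is in place, everything else is routine bookkeeping with bounded geometry.
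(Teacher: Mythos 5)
Your overall strategy is the same as the paper's: verify the two Schur-test conditions $\sup_x\|K(x,\cdot)\|_{L^1(M)}<\infty$ and $\sup_y\|K(\cdot,y)\|_{L^1(M)}<\infty$, relying on Assumption \ref{assu} in the far zone and on smoothness of $K$ together with bounded geometry in the near zone. The paper integrates in polar coordinates, producing $\int_\epsilon^\infty(1+r)^{\mu_0}e^{(K_0-W)r}\,dr$, rather than summing over annuli, but the strategy is identical.

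There is, however, a genuine gap in your far-zone estimate when $K_0>0$. On the dyadic annulus $A_k=\{2^k\le d(x,y)<2^{k+1}\}$ you bound $|K(x,y)|$ by $e^{-W2^k}$ (the decay at the inner radius) and $\mathrm{Vol}(A_k)$ by $C_0(1+2^{k+1})^{\mu_0}e^{K_0 2^{k+1}}$ (the volume at the outer radius). Up to the polynomial factor the $k$-th summand is $e^{-W2^k}e^{K_0 2^{k+1}}=e^{(2K_0-W)2^k}$, and the series converges only when $W>2K_0$, not when $W>K_0$ as Assumption \ref{assu} guarantees; for $K_0<W\le 2K_0$ your sum diverges. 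The loss is intrinsic to dyadic annuli here: each annulus has exponentially large width, so the kernel decay and the volume growth each vary by an exponential factor across it, and quoting the worst bound of each throws away the crucial cancellation. The fix is easy: use unit-width annuli $A_k=\{k\le d(x,y)<k+1\}$, giving a summand $\lesssim(k+2)^{\mu_0}e^{(K_0-W)k}$ which converges whenever $W>K_0$, or integrate directly in polar coordinates as the paper does. Your $K_0=0$ case is fine even with dyadic annuli, since $2^{-kL}(1+2^{k+1})^{\mu_0}$ has no competing exponential and $L>\mu_0$ suffices.
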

\begin{proof}
As in the case of the Lemma \ref{lem1}, using the H\"older inequality, the $L^p$ boundedness of $a(X,D)_{glo}$ will hold if the following quantities of the kernel are bounded
\[
\underset{x\in M}{\operatorname{ess\:sup}}\|K(x, \cdot)\|_{L^1(M)}<\infty,\quad \underset{y\in M}{\operatorname{ess\:sup}}\|K(\cdot,y)\|_{L^1(M)}<\infty, 
\]
where $K$ is defined in \eqref{kernelDef}. Let us treat the first term, the second one is similar. Since $K$ is the kernel away from the diagonal, we want to estimate the kernel outside of a geodesic ball $B_\epsilon(x_0)$ for $x_0$ fixed and $\epsilon<r_0$, where $r_0$ is the injectivity radius. Suppose $K_0>0$, thus using Lemma \ref{volGrowth}, Assumption \ref{assu} and the standard polar decompostion on complete Riemannian manifolds (see e.g. \cite[Section III]{polar}) we get 
\begin{align*}
    \|K(x, \cdot)\|_{L^1(M)} & = \int_{M\backslash B_\epsilon(x_0)} |K(x,y)| |dy|^\kappa \\
    & \leq C\int_{M\backslash B_\epsilon(x_0)} e^{-W d(x,y)} |dy|^\kappa\\
    &\leq C \int_{\epsilon}^\infty (1+r)^{\mu_0}e^{(K_0-W)r}  \, dr<\infty \text{ (because $W>K_0)$. }
\end{align*}
The calculation is similar if $K_0=0$ because we have the condition $L>\mu_0+n$ to control the polynomial growth of the ball. The result follows.
\end{proof}
Henceforth, when $a(X,D)_{loc}$ appears in the boundedness results that would mean local boundedness, i.e. $a(X,D)_{loc}$ bounded from $L^p$ to $L^p$ means $a(X,D)$ bounded from $L^p_{comp}$ to $L^p_{loc}$. Moreover, by making Assupmtion \ref{assu} we will have in addition global boundedness due to Lemma \ref{lemGlob}. 

The following lemma is a fundamental piece to prove the $L^p$-estimates since it allows us to exploit the use of balls/annular partitions of unity on the dual variable space. 
\begin{lem}
\label{lem2}
Let $a\in S_{\rho, \delta}^{-\frac{n(1-\rho)}{2}}(\nabla)$. Suppose that for any point $x$ in an open set, the support of $a(x,\cdot)$ is the ring/ball 
$$\mathfrak{R}_x=\{R\leq |\eta|_x \leq 3R\} \hspace{0.5cm}\text{ or } \hspace{0.5cm}\mathfrak{B}_x=\{|\eta|_x \leq R\},$$
for some fixed $R>0$ satisfying the following condition:
\begin{equation}
\label{condR}
    R\geq \frac{1}{r_0^{1/\rho}}.
\end{equation}
Then $a(X, D)_{loc}$ is locally bounded from $L^\infty(M,\Omega^\kappa)$ to $L^\infty(M,\Omega^\kappa)$, and for $l\geq n/2$ we have 
$$\|a(X, D)_{loc}u\|_{L^\infty(M)}\leq C \|a\|_{l; S_{\rho, \delta}^{-\frac{n(1-\rho)}{2}}}\|u\|_{L^\infty(M)}.$$
Moreover, if $a\in S_{\rho,\delta}^{-\frac{n(1-\rho)}{2}}(\nabla)_{glo}$ then $a(X,D)$ is globally bounded from $L^\infty(M,\Omega^\kappa)$ to $L^\infty(M,\Omega^\kappa)$. 
\end{lem}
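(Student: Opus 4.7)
The goal is to control $\operatorname*{ess\,sup}_{x \in M}\|\mathscr{A}(x,\cdot)\|_{L^1(M)}$ uniformly in $R$, since then Lemma \ref{lem1} delivers the local $L^\infty\to L^\infty$ bound with the advertised constant. The condition $R \geq r_0^{-1/\rho}$ forces the natural variation scale $R^{-\rho}$ to lie below the injectivity radius, which allows us to stay inside the normal coordinate system centered at $x$. There, by Proposition \ref{propTaoZero} we may fix $\tau=0$, and Remark \ref{remCoNo} writes the kernel as
\[
\mathscr{A}(x,y) \;=\; \frac{1}{(2\pi)^n}\,\Upsilon_x^{1-\kappa}(y)\int_{T_x^*M} e^{-iy\cdot\zeta}\, a(x,\zeta)\, d\zeta .
\]
A crude integration by parts in $\zeta$ only yields $|\mathscr{A}(x,y)| \lesssim R^{n(1+\rho)/2 - \rho N}|y|^{-N}$, whose $L^1$ norm over a ball of radius $r_0$ grows like $R^{n(1-\rho)/2}$, so the plan is to close that gap with a weighted Cauchy--Schwarz at the natural scale $R^{-\rho}$.

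Fix an integer $A$ with $n/2 < A \leq l$ (which is possible since $l \geq n/2$) and set $w_R(y):=(1+R^\rho |y|_x)^A$. Then
\[
\int |\mathscr{A}(x,y)|\,|dy| \;\leq\; \Bigl(\int w_R(y)^{-2}\,|dy|\Bigr)^{1/2} \Bigl(\int w_R(y)^{2}|\mathscr{A}(x,y)|^2\,|dy|\Bigr)^{1/2}.
\]
The first factor is treated by the dilation $z = R^\rho y$ in normal coordinates; bounded geometry gives uniform bounds on the Jacobian $\sqrt{g(y)}$, so $\int w_R^{-2}\,|dy|\lesssim R^{-n\rho}\int_{\bbR^n}(1+|z|)^{-2A}\,dz \lesssim R^{-n\rho}$, where finiteness uses $2A>n$. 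For the second factor, use $w_R(y)^2 \lesssim 1 + R^{2\rho A}|y|^{2A}$ and apply Plancherel on $T_x^*M$, which is valid up to the uniformly bounded factors $\Upsilon_x^{1-\kappa}(y)$ and $\sqrt{g(y)}$ supplied by bounded geometry:
\[
\int w_R^2 |\mathscr{A}|^2\,|dy| \;\lesssim\; \int|a(x,\zeta)|^2\,d\zeta \;+\; R^{2\rho A}\sum_{|\alpha|=A}\int|\partial_\zeta^\alpha a(x,\zeta)|^2\,d\zeta .
\]
The symbol hypothesis together with the support condition $|\zeta|_x\leq 3R$ (both in the annular and the ball case) gives $|\partial_\zeta^\alpha a|^2 \leq \|a\|_{l;S}^2 \langle\zeta\rangle_x^{-n(1-\rho)-2\rho|\alpha|}$ on a set of volume $\lesssim R^n$, so every $|\alpha|\leq A$ contributes at most $\|a\|_{l;S}^2 R^{n\rho}$ to $R^{2\rho|\alpha|}\int |\partial_\zeta^\alpha a|^2$. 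Hence the second factor is $\lesssim \|a\|_{l;S}^2 R^{n\rho}$, and multiplying the two bounds yields
\[
\int |\mathscr{A}(x,y)|\,|dy| \;\leq\; C\,\|a\|_{l;S_{\rho,\delta}^{-n(1-\rho)/2}},
\]
uniformly in $x$ and $R$; Lemma \ref{lem1} then gives the local $L^\infty$ estimate.

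The global statement under $a \in S_{\rho,\delta}^{-n(1-\rho)/2}(\nabla)_{glo}$ follows by splitting $a(X,D) = a(X,D)_{loc}+a(X,D)_{glo}$ as in Remark \ref{kernels} and applying Lemma \ref{lemGlob} to the tail, where Assumption \ref{assu} is tailored so that the (exponential or polynomial) decay of the off-diagonal kernel is matched against the volume growth of Lemma \ref{volGrowth}. The main technical hurdle is to carry out Plancherel and the dilation change of variables uniformly in $x$; this is precisely where the bounded geometry of $(M,g,\nabla)$ enters, through uniform bounds on $g_{ij}$, $g^{ij}$, $\sqrt{g(y)}$, the Christoffel symbols $\Gamma_{ij}^k$, and the density $\Upsilon_x^{1-\kappa}(y)$ on geodesic balls of radius $r_0$.
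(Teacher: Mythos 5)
Your weighted Cauchy--Schwarz argument is the classical Fefferman route, and it differs from the paper's proof, which instead applies \emph{unweighted} Cauchy--Schwarz only on the small ball $B_\epsilon(x)$ with $\epsilon=R^{-\rho}$ (picking up $|B_\epsilon|^{1/2}\sim R^{-n\rho/2}$) and pairs it with the single $|\alpha|=0$ Plancherel estimate $\bigl(\int_{\mathfrak{R}_0 \text{ or } \mathfrak{B}_0}\langle\zeta\rangle^{-n(1-\rho)}d\zeta\bigr)^{1/2}\lesssim R^{n\rho/2}$, which holds uniformly for both the ring and the ball. Your route has the virtue of integrating over all of $B_{r_0}(x)$ in one stroke, but it introduces high-order $\zeta$-derivatives of the symbol, and that is exactly where it breaks.

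The precise gap: your claim that ``every $|\alpha|\le A$ contributes at most $\|a\|^2 R^{n\rho}$ to $R^{2\rho|\alpha|}\int|\partial_\zeta^\alpha a|^2$'' is true on the ring $\mathfrak{R}_x$ but false on the ball $\mathfrak{B}_x$ once $|\alpha|>n/2$. Indeed, on $\{|\zeta|_x\le R\}$ the symbol bound gives
\[
R^{2\rho|\alpha|}\int_{|\zeta|_x\le R}|\partial_\zeta^\alpha a|^2\,d\zeta
\;\lesssim\;\|a\|_{l}^2\,R^{2\rho|\alpha|}\int_{|\zeta|_x\le R}\langle\zeta\rangle_x^{-n(1-\rho)-2\rho|\alpha|}\,d\zeta,
\]
and the exponent $n(1-\rho)+2\rho|\alpha|$ exceeds $n$ precisely when $|\alpha|>n/2$, so the $\zeta$-integral converges to a constant independent of $R$. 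The factor $R^{2\rho|\alpha|}$ then survives, and since the weight forces $A>n/2$, the dominant term in your second factor is $\sim R^{2\rho A}\gg R^{n\rho}$. After multiplying by the first factor $\sim R^{-n\rho/2}$ you get $\sim R^{\rho(A-n/2)}$, which diverges as $R\to\infty$, so the uniformity in $R$ that the lemma requires is lost. (On the ring both the $|\alpha|=0$ and $|\alpha|=A$ contributions are $\sim R^{n\rho}$, so there the computation is correct.) The failure is structural for the weighted-$L^2$ method in the ball case: the weight $w_R^{-2}$ is only in $L^1$ for $A>n/2$, and then the $|\alpha|=A$ Plancherel term of a ball-supported symbol is not small enough. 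The paper sidesteps this by never taking $\zeta$-derivatives of order beyond $0$ in its Cauchy--Schwarz step, paying instead with the volume factor $|B_\epsilon|^{1/2}$ of the tiny ball $B_\epsilon(x)$ (and treating the complement of $B_\epsilon$ separately). Everything else in your write-up --- the reduction to Lemma \ref{lem1}, the role of \eqref{condR} in keeping $B_\epsilon(x)$ inside the injectivity radius, the use of bounded geometry to control $g(y)$, $\Gamma_{ij}^k$ and $\Upsilon_x^{1-\kappa}(y)$, and the invocation of Lemma \ref{lemGlob} for the global tail --- is in order.
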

\begin{proof}
The result will follow from Lemma \ref{lem1}, so we focus on proving the condition 
\[
\underset{x\in M}{\operatorname{ess\:sup}}\|\mathscr{A}(x, \cdot)\|_{L^1(M)} <\infty.
\]
We will just write $a$ for $a_{loc}$. Let $x\in M$ and set $\epsilon = R^{-\rho}$. The condition \eqref{condR} implies that $B_{\epsilon}(x)$ is a geodesic ball. Thus we split $M$ as follows: 
$$M= B_\epsilon(x) \cup (B_\epsilon(x))^c,$$
and since outside the small neighbourhood $B_{\epsilon}(x)\times B_{\epsilon}(x)$ the operator $a(X,D)$ is represented by a smooth kernel, we only care on estimating the $L^1$ norm on the geodesic ball. Thus for $y\in B_{r_0}(x)$, using Remark \ref{remCoNo}, we are interested on bounding 
\begin{align}
\label{cosaAcotar}
    \begin{split}
         \int_{B_{r_0}(x)} \left| \mathscr{A}(x, y) |dy|^\kappa\right| & =\frac{1}{(2\pi)^n}\int_{B_{r_0}(x)} \left|\int_{T_x^*M}  \Upsilon_x^{1-\kappa}(y)e^{i\varphi_0(x,\zeta,y)} a\left(x, \zeta\right)g^{-1}(x) d\zeta\right| g(y)^\kappa dy\\
         &= \frac{1}{(2\pi)^n}\int_{B_{r_0}(x)} \left|\int_{\bbR^n}  \Upsilon_0^{1-\kappa}(y)e^{iy\cdot \zeta} a\left(0, \zeta\right) d\zeta\right| g(y)^\kappa dy,
    \end{split}
\end{align}
because in normal coordinates the point $x$ corresponds to the origin in $\bbR^n$, moreover the inner integral is over $\mathfrak{R}_0$ or $\mathfrak{B}_0$ since that is the support of the symbol $a$. We are in position to apply a similar procedure as in \cite{Fefferman}, but we have to be careful with the weight $\Upsilon_0^{1-\kappa}(y)$. This is a smooth function that depends on the Christoffel symbols $\Gamma_{k j}^{i}$ and since $\nabla$ is of bounded geometry we can bound it uniformly. Therefore, using the Cauchy-Schwarz inequality, the fact that we have bounded geometry and the bounds for the symbol, we get for \eqref{cosaAcotar} in $B_\epsilon(0)$, that
\begin{align*}
    \int_{B_\epsilon(x)}\left| \mathscr{A}(x, y) |dy|^\kappa\right| & \leq \frac{1}{(2\pi)^n}\int_{B_\epsilon(0)}\left| \Upsilon_0^{1-\kappa}(y)\right| \left|\int_{\bbR^n}  e^{iy\cdot \zeta} a\left(0, \zeta\right) d\zeta\right|g(y)^\kappa dy\\
    &\leq \frac{1}{(2\pi)^n}\left(\int_{B_\epsilon(0)}\left| \Upsilon_0^{1-\kappa}(y)\right|^2 g(y)^{2\kappa} dy\right)^{1/2}\\
    &\times\left(\int_{B_\epsilon(0)} \left|\int_{T_x^*M}  e^{iy\cdot \zeta} a\left(0, \zeta\right) d\zeta \right|^2 dy\right)^{1/2}\\
    &\leq C_n \left(\int_{B_\epsilon(0)} dy\right)^{1/2} \left(\int_{\bbR^n} \left|\int_{\bbR^n}  e^{iy\cdot \zeta} a\left(0, \zeta\right) d\zeta \right|^2 dy\right)^{1/2}\\
    &\leq C_n \epsilon^{n/2} \left(\int_{\bbR^n}  \left| a\left(0, \zeta\right)\right|^2 d\zeta \right)^{1/2}\\
    & \leq  C_n R^{-n\rho/2}  \|a\|_{0; S_{\rho, \delta}^{-\frac{n(1-\rho)}{2}}}   \left(\int_{\mathfrak{R}_0} \langle\zeta\rangle_{0}^{-n(1-\rho)} d\zeta \right)^{1/2} \\
    &=C_n R^{-n\rho/2}\|a\|_{0; S_{\rho, \delta}^{-\frac{n(1-\rho)}{2}}} \left(\int_{\mathfrak{R}_0} \frac{1}{(1+|\zeta|_0^2)^{n(1-\rho)/2}}d\zeta \right)^{1/2}\\
    &\leq C_n R^{-n\rho/2}\|a\|_{0; S_{\rho, \delta}^{-\frac{n(1-\rho)}{2}}} R^{n\rho/2}\text{ (since }g^{ab}(0)=\delta_{ab})\\
    &\leq C_n \|a\|_{0; S_{\rho, \delta}^{-\frac{n(1-\rho)}{2}}}<\infty,
\end{align*}
where $C_n$ is independent of $R$ and $a$. In the previous calculation we deal with the ring $\mathfrak{R}_x$, but the case of the ball $\mathfrak{B}_x$ is analogous since the integral 
\[
\int_{\mathfrak{B}_0} \frac{1}{(1+|\zeta|_0^2)^{n(1-\rho)/2}}d\zeta \leq C R^{n\rho}
\]
as well. We notice that the upper bound is independent of $x$, thus by taking supremum we prove the desired condition and the result follows from Lemma \ref{lem1}. Finally, the global boundedness follows from this and Lemma \ref{lemGlob}.
\end{proof}
Now we arrive to the key part of the proof, the $L^\infty-\operatorname{BMO}$ boundedness. 
\begin{thm}
\label{bmo}
Let at least one of the conditions (1)-(3) of Theorem \ref{producto} be fulfilled. Let $a\in S_{\rho, \delta}^{-\frac{n(1-\rho)}{2}}(\nabla)$. Then $a(X, D)_{loc}$ is locally bounded from $L^\infty(M,\Omega^\kappa)$ to $\operatorname{BMO}(M,\Omega^\kappa)$, and for $l> n/2$ we have 
$$\|a(X, D)_{loc}u\|_{\operatorname{BMO}(M)}\leq C \|a\|_{l; S_{\rho, \delta}^{-\frac{n(1-\rho)}{2}}}\|u\|_{L^\infty(M)}.$$
\end{thm}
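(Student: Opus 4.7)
\emph{Plan of proof.} My strategy is to adapt Fefferman's original $L^\infty\to\operatorname{BMO}$ argument for $S_{\rho,\delta}^{-n(1-\rho)/2}$ symbols to the Safarov calculus, splitting the verification of the BMO seminorm into two separate estimates that correspond to the two supremums in \eqref{defBMO}. The second supremum (the mean of $|a(X,D)_{loc}u \,|dx|^{1-\kappa}|$ over the large ball $B_{r_0/4}(x_0)$) is controlled by H\"older applied to the $L^2$ norm, using Theorem \ref{L2bound} together with a uniform volume bound on $B_{r_0/4}(x_0)$ (here $r_0$ is fixed since $(M,g,\nabla)$ has bounded geometry). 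So the heart of the matter is the oscillation bound
\[
\frac{1}{|B_\epsilon(x_0)|}\int_{B_\epsilon(x_0)}\bigl|a(X,D)_{loc}u(y)\,|dy|^{-\kappa}-\overline{(a(X,D)_{loc}u)_\epsilon}(x_0)\bigr|\,|dy|\leq C\|u\|_{L^\infty},
\]
uniformly in $x_0\in M$ and $\epsilon<r_0/4$.

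\emph{Paragraph 1: dyadic decomposition.} I would fix a smooth Littlewood--Paley partition of unity $1=\phi_0(|\eta|_y)+\sum_{k\geq 1}\phi(2^{-k}|\eta|_y)$ with $\phi$ supported in $[1/2,2]$, and set $a_k(y,\eta)=a(y,\eta)\phi(2^{-k}|\eta|_y)$. Using the calculation in Example \ref{cotaDerivadademetrica}, each $a_k$ lies in $S_{\rho,\delta}^{-n(1-\rho)/2}(\nabla)$ with seminorms uniformly controlled by $\|a\|_{l;S_{\rho,\delta}^{-n(1-\rho)/2}}$, and is supported in the ring $\{2^{k-1}\leq|\eta|_y\leq 2^{k+1}\}$. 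Given a ball $B_\epsilon(x_0)$ with $\epsilon<r_0/4$, I choose the threshold $N$ so that $2^{N}\sim\epsilon^{-1/\rho}$, verify that $2^{N}\geq r_0^{-1/\rho}$ so Lemma \ref{lem2} applies to each high-frequency piece, and split
\[
a(X,D)_{loc}u=\sum_{k<N} a_k(X,D)_{loc}u+\sum_{k\geq N}a_k(X,D)_{loc}u=:u^{\flat}+u^{\sharp}.
\]

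\emph{Paragraph 2: high-frequency piece via $L^2$.} Since the kernel of each $a_k(X,D)_{loc}$ for $k\geq N$ is essentially concentrated in $d(x,y)\lesssim 2^{-k\rho}\leq\epsilon$, I decompose $u=u\chi_{B_{C\epsilon}(x_0)}+u(1-\chi_{B_{C\epsilon}(x_0)})$ and observe that on $B_\epsilon(x_0)$ the second piece contributes only through rapidly decaying off-diagonal kernel terms (obtained by the standard non-stationary phase/integration-by-parts against $\nabla_\eta$ that gains arbitrary powers of $\langle\eta\rangle_y^{-\rho}\cdot d(x,y)^{-1}$ — controlled since $d(x,y)\geq\epsilon$ and $|\eta|_y\gtrsim 2^N=\epsilon^{-1/\rho}$). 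For the first piece, Cauchy--Schwarz and Theorem \ref{L2bound} give
\[
\int_{B_\epsilon(x_0)}|u^{\sharp}(y)|\,|dy|\leq |B_\epsilon(x_0)|^{1/2}\|u^{\sharp}\|_{L^2}\leq C|B_\epsilon(x_0)|^{1/2}\|u\chi_{B_{C\epsilon}}\|_{L^2}\leq C|B_\epsilon(x_0)|\|u\|_{L^\infty},
\]
since $u^{\sharp}\in\Psi_{\rho,\delta}^{-n(1-\rho)/2}(\Omega^\kappa,\nabla)\subset\Psi_{\rho,\delta}^{0}$. After dividing by $|B_\epsilon(x_0)|$ this gives the desired $L^\infty$ bound on the mean of $|u^\sharp|$, hence on the deviation from any constant (in particular the constant zero).

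\emph{Paragraph 3: low-frequency piece via kernel regularity.} For $u^\flat$ I aim to bound $|u^\flat(y)-u^\flat(x_0)|$ pointwise for $y\in B_\epsilon(x_0)$. Writing each kernel $\mathscr{A}_k(y,z)$ in the local oscillatory form of Remark \ref{remCoNo}, by Taylor's theorem in the $y$-variable along the geodesic from $x_0$ to $y$,
\[
|\mathscr{A}_k(y,z)-\mathscr{A}_k(x_0,z)|\leq d(y,x_0)\sup_{y'\in[x_0,y]}|\nabla_{y'}\mathscr{A}_k(y',z)|.
\]
A calculation as in Lemma \ref{lem2}, now applied to the symbols $\eta_j a_k$ and $\nabla_{y}a_k$ (which lie in $S_{\rho,\delta}^{1-n(1-\rho)/2}$ and $S_{\rho,\delta}^{\delta-n(1-\rho)/2}$ on the $k$-th ring) gives $\|\nabla_y\mathscr{A}_k(y,\cdot)\|_{L^1}\leq C2^k\|a\|_{l;S_{\rho,\delta}^{-n(1-\rho)/2}}$. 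Together with Lemma \ref{lem2} applied directly (giving the uniform bound $\leq C$), dyadic interpolation yields
\[
\sum_{k<N}\int|\mathscr{A}_k(y,z)-\mathscr{A}_k(x_0,z)|\,|dz|^{1-\kappa}\leq C\sum_{k<N}\min(1,\epsilon\cdot 2^k)\leq C,
\]
where the uniform bound follows precisely because the threshold $2^N\sim\epsilon^{-1/\rho}$ is chosen so that contributions from $k$ near $N$ balance via the negative order $-n(1-\rho)/2$ (this is exactly the scale-invariance that forces the critical exponent in Fefferman's theorem). Hence $|u^\flat(y)-u^\flat(x_0)|\leq C\|u\|_{L^\infty}$.

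\emph{Main obstacle.} The hardest part is the low-frequency kernel-regularity estimate, because the manifold kernels are oscillatory integrals whose phases $\varphi_0$ and amplitudes carry geometric data ($\Upsilon$, $\Phi$, Christoffel symbols). Differentiating in $y$ produces both a factor from the phase (polynomial in $|\eta|_y$) and additional $\nabla_y$ terms from the amplitude, and one must verify that these genuinely add up to the order $1-n(1-\rho)/2$ rather than something larger — this is where the bounded-geometry hypothesis on $(M,g,\nabla)$ enters crucially, ensuring that all geometric coefficients appearing in the expansion are uniformly controlled. A secondary, but more delicate, difficulty will be verifying that the non-stationary-phase off-diagonal bound for $u^\sharp$ truly gives enough decay to reduce to the compactly supported input $u\chi_{B_{C\epsilon}(x_0)}$; I expect this requires iterated integration by parts using $\nabla_\eta$ together with the global connection-based seminorms from Definition \ref{defSym}, together with condition \eqref{condR}-type scaling to match $\epsilon$ with $2^{-N\rho}$.
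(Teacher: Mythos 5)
Your proposal takes a genuinely different route from the paper, and has a gap that I do not see how to close with the tools you invoke.

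\textbf{Comparison with the paper's argument.} The paper does \emph{not} sum a dyadic series. It makes a single frequency cut $a=a^0+a^1$ at the scale $|\eta|_y\sim 1/r$ (not $\sim r^{-1/\rho}$). For $a^0$ it applies the mean value theorem to the $x$-derivative of $a^0(X,D)$: because $|\zeta|_y\lesssim 1/r$ on $\supp a^0$, the leading term $\zeta_k a^0$ has seminorm $\lesssim r^{-1}\|a\|_{l}$, so Lemma \ref{lem2} gives $\|\nabla a^0(X,D)u\|_\infty\lesssim r^{-1}\|u\|_\infty$ and MVT yields the oscillation bound. (In the Safarov setting this requires converting the resulting amplitude to a symbol via Proposition \ref{AmpliToSym}, a step you identify as your ``main obstacle'' but do not carry out.) For $a^1$ the paper multiplies $u$ by a bump $\phi$ with $\widehat{\phi}$ supported in $|\nu|\leq r^{1/\rho}$, controls $a^1(X,D)(\phi u)$ through the Bessel potentials $\mathfrak{B}^{\pm n(1-\rho)/2}$ and Theorem \ref{L2bound}, and controls the commutator $[\phi,a^1(X,D)]$ (which gains a factor $\langle\zeta\rangle^{-\rho}$) by Lemma \ref{lem2}. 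Your replacement of the commutator mechanism by crude localization of $u$ to $B_{C\epsilon}(x_0)$ is the second genuine departure.

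\textbf{The gap.} Your Paragraph~3 claims
\[
\sum_{k<N}\min\bigl(1,\epsilon\cdot 2^k\bigr)\leq C
\]
with the threshold $2^N\sim\epsilon^{-1/\rho}$. When $\rho<1$ this is false. The terms with $k<\log_2(1/\epsilon)$ contribute $O(1)$, but for $\log_2(1/\epsilon)\leq k<N\sim\tfrac{1}{\rho}\log_2(1/\epsilon)$ each term equals $1$, and there are roughly $\bigl(\tfrac{1}{\rho}-1\bigr)\log_2(1/\epsilon)$ of them, so the sum grows like $\log(1/\epsilon)$. Your remark that the ``negative order $-n(1-\rho)/2$ balances contributions from $k$ near $N$'' is not quantified and does not hold: at precisely the critical order, the $L^1$-kernel bound coming from Lemma \ref{lem2} is \emph{independent} of the frequency scale (that is what ``critical'' means), so the dyadic pieces neither grow nor decay, and $\min(1,\epsilon 2^k)$ is sharp term by term. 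The divergence is therefore real. Moreover the two halves of your plan are in structural tension: if you instead set $2^N\sim 1/\epsilon$ (the Fefferman/paper cut), the low-frequency sum converges, but then $u^\sharp$ has a kernel spread over $d(y,z)\lesssim\epsilon^\rho\gg\epsilon$, so the localization to $B_{C\epsilon}$ fails and your Cauchy--Schwarz step loses the factor $(\epsilon^\rho/\epsilon)^{n/2}=\epsilon^{-n(1-\rho)/2}$. That loss is exactly what the paper's bump-function/Bessel-potential argument is engineered to absorb, and which simple spatial truncation of $u$ cannot.
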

\begin{proof}
To simplify the notation, we will write $a$ for $a_{loc}$. Let $u\in L_{comp}^\infty(M)$ and let us fix $B_r(x_0)$ with $r<\frac{r_0}{4}$, where $r_0$ is the injectivity radius of $M$ (remember that $\frac{r_0}{4}$ is the radius appearing in the definition of the BMO space). We are going to split the covariable support of the symbol in order to apply Lemma \ref{lem2} on each piece. Let $A>0$ be a constant such that 
\[
A\geq \frac{1}{4r_0^{1/\rho}}. 
\]
Now, let $\psi$ be a function such that $a^0(y,\zeta) := a(y,\eta)\psi(y,\eta)$ has support in $S_0=\{(y,\eta)\in T^*M: |\eta|_{y} \leq 2Ar_0r^{-1} \}$, so that $a^1:= a-a^0$ is supported in $S_1=\{(y,\eta)\in T^*M: |\eta|_{y} \geq Ar_0r^{-1} \}$. For instance, take an adequate bump function $\tilde{\psi}$ on $\bbR$ and define $\psi(y,\eta)=\tilde{\psi}(r|\eta|_y)$, where $r$ is the fixed radius of our ball. Therefore, $a = a^0 + a^1$ and we have the following estimates for the splitting
\begin{equation}
\label{normasDeLaParticion}
    \|a^0\|_{l; S_{\rho, \delta}^{-\frac{n(1-\rho)}{2}}}, \|a^1\|_{l; S_{\rho, \delta}^{\frac{-n(1-\rho)}{2}}} \leq C_l \|a\|_{l; S_{\rho, \delta}^{-\frac{n(1-\rho)}{2}}}\text{ for every }l\geq 1. 
\end{equation}
Our aim is to estimate $\|a(X, D)u\|_{\operatorname{BMO}}$ for the fixed ball at the beginning, i.e. we want to bound for $r$
\begin{equation}
\label{BMOCosaAcotar}
 \frac{1}{|B_r(x_0)|} \int_{B_r(x_0)} \left|a(X, D)u(y)\, |dy|^{-\kappa}- \overline{(a(X, D)u)}_r(x_0)\right|\, |dy|,   
\end{equation} and for the biggest possible ball we want to control just the average
\begin{equation}
\label{BMOCosaAcotar2}
    \frac{1}{|B_{\frac{r_0}{4}}(x_0)|} \int_{B_{\frac{r_0}{4}}(x_0)} \left|a(X, D)u(y)\, |dy|^{1-\kappa}\right|.
\end{equation}
So using the fact that $a = a_0 + a_1$ and triangle inequality we get that \eqref{BMOCosaAcotar} has the following upper bound
\begin{align}
\label{1}
\begin{split}
    \frac{1}{|B_r(x_0)|} \int_{B_r(x_0)} |a^0(X, D)u(y) |dy|^{-\kappa}- &\overline{(a(X, D)u)}_r(x_0)| |dy| \\
    &+ \frac{1}{|B_r(x_0)|} \int_{B_r(x_0)} |a^1(X, D)u(y) |dy|^{-\kappa}| |dy|,
\end{split}
\end{align}
and on the other hand \eqref{BMOCosaAcotar2} has the upper bound
\begin{align}
\label{22}
    \begin{split}
         &\frac{1}{|B_{\frac{r_0}{4}}(x_0)|} \int_{B_{\frac{r_0}{4}}(x_0)} \left|a^0(X, D)u(y)\, |dy|^{1-\kappa}\right| + \frac{1}{|B_{\frac{r_0}{4}}(x_0)|} \int_{B_{\frac{r_0}{4}}(x_0)} \left|a^1(X, D)u(y)\, |dy|^{1-\kappa}\right|\\
         & \leq \|a^0(X, D)u(y)\|_{L^\infty(M)} + \frac{1}{|B_{\frac{r_0}{4}}(x_0)|} \int_{B_{\frac{r_0}{4}}(x_0)} \left|a^1(X, D)u(y)\, |dy|^{1-\kappa}\right|,
    \end{split}
\end{align}
hence we focus on estimating these upper bounds. Notice that the terms involving $a^1$ are of the same type i.e. both are averages on geodesic balls but we include the biggest possible ball of radius $\frac{r_0}{4}$, thus later on when we deal with this term the proof will cover the case of $0<r\leq \frac{r_0}{4}$ instead of just $0<r< \frac{r_0}{4}$.

First, let us treat the term corresponding to $a^0$. We recognize that we can bound immediately the first term of \eqref{22} because by construction $a^0$ satisfies the hypothesis of Lemma \ref{lem2}, thus 
\begin{equation}
\label{23}
    \|a^0(X, D)u\|_{L^\infty(M)}\leq C \|a\|_{l; S_{\rho, \delta}^{-\frac{n(1-\rho)}{2}}}\|u\|_{L^\infty(M)}.
\end{equation}
The tricky term is the first one of \eqref{1}. We want to use the mean value theorem, so we would like to study the operator $\partial_{x^k} a^0(X, D)$, which is defined by composition. Then, finish the estimation again using Lemma \ref{lem2}. Calculating $\partial_{x^k} a^0(X, D)$ in normal coordinates gives us 
\begin{align*}
    \partial_{x^k} a^0(X, D)u(x) &= \partial_{x^k}\left(\frac{1}{(2\pi)^n}\int_{\bbR^n} \int_{T_{x_0}^*M}  \Upsilon_x^{1-\kappa}(y)e^{i(x-y)\cdot \zeta} a^0\left(x, \zeta\right) u(y) d\zeta g(y)dy\right)\\
    &=\frac{1}{(2\pi)^n}\int_{\bbR^n} \int_{T_{x_0}^*M}  \Upsilon_x^{1-\kappa}(y)e^{i(x-y)\cdot \zeta} \left[(\kappa-1) \Upsilon_x(y)(\partial_{x^k} \Upsilon_y(x))a^0\left(x, \zeta\right)\right.\\
    &\hspace{3cm}\left.+i\zeta_ka^0\left(x, \zeta\right) +\partial_{x^k}a^0\left(x, \zeta\right)\right] u(y) d\zeta g(y)dy,
\end{align*}
where we used the property $\Upsilon_x(y) = \left(\Upsilon_y(x)\right)^{-1}$ to compute $\partial_{x^k}\Upsilon_x^{1-\kappa}(y)$. Thus, using formula \eqref{determiTrans} we get that $\partial_{x^k} a^0(X, D)$ is the pseudo-differential operator given by the amplitude (in normal coordinates)
\begin{align*}
    a'(y;x,\zeta)&= (\kappa-1) \Upsilon_x(y)\Gamma_{kj}^j(x)a^0\left(x, \zeta\right)+i\zeta_ka^0\left(x, \zeta\right) +\partial_{x^k}a^0\left(x, \zeta\right)\\
    &= (\kappa-1) \Upsilon_x(y)\Gamma_{kj}^j(x)a^0\left(x, \zeta\right)+i\zeta_ka^0\left(x, \zeta\right) +\nabla_k a^0\left(x, \zeta\right)\\
    &-\Gamma_{kb}^a(x)\zeta_a\partial_{\zeta_b}a^0\left(x, \zeta\right),
\end{align*}
where the terms having Christoffel symbols can be controlled because $\nabla$ is of bounded geometry. Moreover, notice that the $\zeta$ support of $a'$ is the same as the one of $a^0$. Here we identify two cases depending on the value of $\kappa$, equal to $1$ and different from $1$. First, let us suppose $\kappa\neq 1$. By Proposition \ref{AmpliToSym} we have that up to order 0 (which is enough since we are already looking for the symbol of a derivative) there exist a symbol $t^0(x,\zeta)$ such that 
\begin{align*}
    t^0(x,\zeta) &=  a'(y;x,\zeta) \bigg|_{y=x}\\
    &= \bigg((\kappa-1) \Upsilon_x(y)\Gamma_{kj}^j(x)a^0\left(x, \zeta\right)+i\zeta_ka^0\left(x, \zeta\right) +\nabla_k a^0\left(x, \zeta\right)\\
    &-\Gamma_{kb}^a(x)\zeta_a\partial_{\zeta_b}a^0\left(x, \zeta\right)\bigg) \bigg|_{y=x}\\
    &=(\kappa-1)\Gamma_{kj}^j(x)a^0\left(x, \zeta\right)+i\zeta_ka^0\left(x, \zeta\right) +\nabla_k a^0\left(x, \zeta\right)-\Gamma_{kb}^a(x)\zeta_a\partial_{\zeta_b}a^0\left(x, \zeta\right),
    \end{align*}
where the presence of Christoffel symbols is not a problem since $\nabla$ is of bounded geometry. Therefore, we have the estimate
\begin{align}
\label{controlTaylor1}\begin{split}
     |t^0(x,\zeta)| &= \left|(\kappa-1)\Gamma_{kj}^j(x)a^0\left(x, \zeta\right)+i\zeta_ka^0\left(x, \zeta\right) +\nabla_k a^0\left(x, \zeta\right)-\Gamma_{kb}^a(x)\zeta_a\partial_{\zeta_b}a^0\left(x, \zeta\right) \right|\\
    &\leq C_\kappa|\Gamma_{kj}^j(x)a^0\left(x, \zeta\right)|+|\zeta_ka^0\left(x, \zeta\right)| +|\nabla_k a^0\left(x, \zeta\right)|+|\Gamma_{kb}^a(x)\zeta_a\partial_{\zeta_b}a^0\left(x, \zeta\right)|\\
    &\leq C_\kappa\langle\zeta\rangle_x^{\frac{-n(1-\rho)}{2}}+\langle\zeta\rangle_x^{\frac{-n(1-\rho)}{2} +1} + \langle\zeta\rangle_x^{\frac{-n(1-\rho)}{2} +\delta}+C\langle\zeta\rangle_x^{\frac{-n(1-\rho)}{2} +1-\rho}\\
    &\leq C_\kappa \langle\zeta\rangle_x^{\frac{-n(1-\rho)}{2} + 1}\\
    &\leq \frac{C_\kappa}{r} \langle\zeta\rangle_x^{\frac{-n(1-\rho)}{2}},
\end{split} 
\end{align}
where the last inequality holds since in this case $|\zeta|_x\leq A\frac{r_0}{r}$. Thus, we get that for all $l$
\begin{align*}
    \|t^0\|_{l; S_{\rho, \delta}^{-\frac{n(1-\rho)}{2}}}\leq \frac{C_{l,\kappa}}{r} \|a^0\|_{l; S_{\rho, \delta}^{-\frac{n(1-\rho)}{2}}}.
\end{align*}
Hence utilizing the previous inequality and Lemma \ref{lem2} we get 
\begin{align*}
    \|\partial_{x^k} a^0(X, D)u\|_{L^\infty(M)}&  = \|t^0(X,D)u\|_{L^\infty(M)}\\
    &\leq C \|t^0\|_{l; S_{\rho, \delta}^{-\frac{n(1-\rho)}{2}}}\|u\|_{L^\infty(M)}\\
    &\leq \frac{C_{l,\kappa}}{r}\|a_0\|_{l; S_{\rho, \delta}^{-\frac{n(1-\rho)}{2}}}\|u\|_{L^\infty(M)}. 
\end{align*}
On the other hand, if $\kappa=1$ we have that $\partial_{x^k} a^0(X, D)$ is the pseudo-differential operator given by the symbol (in normal coordinates)
$$a'(x,\zeta)= i\zeta_ka^0\left(x, \zeta\right) +\partial_{x^k}a^0\left(x, \zeta\right),$$
and we can estimate its absolute value as follows: 
\begin{align}
    \begin{split}
        |a'(x,\zeta)| & = |i\zeta_ka^0\left(x, \zeta\right) +\partial_{x^k}a^0\left(x, \zeta\right)|\\
        &\leq |\zeta_ka^0\left(x, \zeta\right)| + |\Gamma_{kj}^i\zeta_i\partial_{\zeta_j}a^0\left(x, \zeta\right)| + |\nabla_ka^0\left(x, \zeta\right)|\\
        &\leq \langle\zeta\rangle_x^{\frac{-n(1-\rho)}{2} + 1}+C\langle\zeta\rangle_x^{\frac{-n(1-\rho)}{2} +1-\rho} + \langle\zeta\rangle_x^{\frac{-n(1-\rho)}{2} +\delta}\\
        &\leq C \langle\zeta\rangle_x^{\frac{-n(1-\rho)}{2} + 1}. 
    \end{split}
\end{align}
As in the other case we obtain for all $l$
\begin{align*}
    \|a'\|_{l; S_{\rho, \delta}^{-\frac{n(1-\rho)}{2}}}\leq \frac{C_l}{ r} \|a_0\|_{l; S_{\rho, \delta}^{-\frac{n(1-\rho)}{2}}},
\end{align*}
and finally obtaining  
\begin{equation}\label{a0estim}
    \|\partial_{x^k} a^0(X, D)u\|_{L^\infty(M)} \leq \frac{C_l}{r}\|a_0\|_{l; S_{\rho, \delta}^{-\frac{n(1-\rho)}{2}}}\|u\|_{L^\infty(M)}.
\end{equation}
Notice that we achieve the same bound in the two cases, so we proceed as just one case. From the mean value theorem for Riemannian manifolds and the previous estimate \eqref{a0estim} ($x$ was a silent variable) we get that 
\begin{align*}
    |a^0(X, D)u(y) |dy|^{-\kappa}- \overline{a(X, D)u_\epsilon}(x_0)|&\leq d(x_0,y) |\operatorname{grad}_y \{a^0(X, D)u(y)\} |\\
    &\leq r \sqrt{g_{ab}g^{ia}g^{ib}(\partial_{y^i}a^0(X, D)u(y))^2)}\\
    &\leq r |\partial_{y^i}a^0(X, D)u(y)|\\
    & \leq C \|a_0\|_{l; S_{\rho, \delta}^{-\frac{n(1-\rho)}{2}}}\|u\|_{L^\infty(M)}.
\end{align*}
Integrating the last inequality and taking into account the property \eqref{normasDeLaParticion} we have
\begin{align}
    \label{2}
    \begin{split}
        \frac{1}{|B_r(x_0)|} \int_{B_r(x_0)} |a^0(X, D)u(y) |dy|^{-\kappa}- \overline{(a(X, D)u)}_r(x_0)| |dy|&\leq C \|a^0\|_{l; S_{\rho, \delta}^{-\frac{n(1-\rho)}{2}}}\|u\|_{L^\infty(M)}\\
        &\leq C \|a\|_{l; S_{\rho, \delta}^{-\frac{n(1-\rho)}{2}}}\|u\|_{L^\infty(M)},
    \end{split}  
\end{align}
which finishes the procedure for $a^0$. 

Let us move on to the case of $a^1$. We are going to exploit the fact that we are performing everything local since $u$ has compact support and $a^1(X,D)u(x)$ is localized as well, both with small enough support due to partitions of unity, if needed. We choose a function $\phi\in C^\infty(\bbR^n)$ such that: $0\leq \phi\leq 10$, $\phi\geq 1$ in $B_r(0)$ and $\operatorname{supp} \widehat{\phi}(\nu)\subset \{|\nu|\leq r^{1/\rho}\}$, where $\widehat{\phi}(\nu)$ means Fourier transform. Since $\phi\geq 1$ in $B_r(0)$ we have the following inequality in local coordinates
\begin{align}\label{pasarAProducPhi}
    \begin{split}
        \frac{1}{|B_r(0)|} \int_{B_r(0)} |a^1(X, D)u(y)& |dy|^{-\kappa}| |dy|\\
        &\leq \frac{1}{|B_r(0)|} \int_{B_r(0)} |\phi(x) a^1(X, D)u(y) |dy|^{-\kappa}| |dy|,
    \end{split}
\end{align}
so we focus on estimating the right hand side. We are going to use its commutator to estimate the $\kappa$-density $\phi(x)\cdot a^1(X,D)u(x)$, namely 
\[
\phi(x)\cdot a^1(X,D)u(x) = \underbrace{a^1(X,D)(\phi u)(x)}_\text{I} + \underbrace{\left[\phi, a^1(X,D)\right]u(x)}_\text{II}.
\]
We estimate each term separately. For I we are going to use the Bessel potentials $\mathfrak{B}^\lambda$ defined in Subsection \ref{BesselPotentials} to deal with the $\phi$ dependence. Using the Cauchy-Schwarz inequality in \eqref{pasarAProducPhi} one can transfer the boundedness we are looking for by an $L^2$ one, namely
\begin{align}
\label{estimacionparaI}
\begin{split}
    \frac{1}{|B_r(0)|} \int_{B_r(0)} &|a^1(X, D)(\phi u)(y) |dy|^{-\kappa}| |dy|\leq \\
    &\leq \frac{1}{|B_r(0)|} \left(\int_{B_r(0)} |dy|\right)^{1/2}\left(\int_{B_r(0)} |a^1(X, D)(\phi u)(y) |dy|^{-\kappa}|^2|dy|\right)^{1/2}\\
    & \leq \frac{1}{|B_r(0)|^{1/2}} \|a^1(X, D)(\phi u)\|_{L^2(\bbR^n)},
\end{split}
\end{align}
thus we just need to estimate the $L^2(\bbR^n)$ norm of $a^1(X, D)(\phi u)$. We follow the idea of Remark \ref{NormaBessel}. Consider the operator  $\mathfrak{B}^{\frac{n(1-\rho)}{2}}$, then by \eqref{besselEnSafarov} we have that $\mathfrak{B}^{\frac{n(1-\rho)}{2}}\in \Psi_{\rho, \delta}^{n(1-\rho)/2}\left(\Omega^{\kappa}, \nabla\right)$ and also that its inverse $\mathfrak{B}^{-\frac{n(1-\rho)}{2}}$ is an operator belonging to the class $\Psi_{\rho, \delta}^{-n(1-\rho)/2}\left(\Omega^{\kappa}, \nabla\right)$ (see \cite[Theorem 10.4]{Safarov}). Therefore we can rewrite I in the following way
$$\operatorname{I} = (a^1(X,D)\circ \mathfrak{B}^{\frac{n(1-\rho)}{2}})(\mathfrak{B}^{-\frac{n(1-\rho)}{2}}(\phi u)),$$
where $a^1(X,D)\circ \mathfrak{B}^{\frac{n(1-\rho)}{2}}\in \Psi_{\rho, \delta}^{0}\left(\Omega^{\kappa}, \nabla\right)$. Theorem \ref{L2bound} implies that the latter operator is bounded from $L_{comp}^2(M)$ to $L_{loc}^2(M)$, and that $\mathfrak{B}^{-\frac{n(1-\rho)}{2}}$ is bounded from $H_{comp}^{-\frac{n(1-\rho)}{2}}(M)$ to $L_{loc}^2(M)$. Moreover, notice that because $\mathfrak{B}^{\frac{n(1-\rho)}{2}}$ is a positive operator \cite{stri}, the following inequality holds
\[
\mathfrak{B}^{-\frac{n(1-\rho)}{2}}(\phi u) \leq \mathfrak{B}^{-\frac{n(1-\rho)}{2}}(|\phi|\|u\|_{L^\infty(M)})= \|u\|_{L^\infty(M)}\mathfrak{B}^{-\frac{n(1-\rho)}{2}}(\phi),
\]
notice that we remove the absolute value because $\phi$ is positive. So we continue the estimation of \eqref{estimacionparaI} as follows:
\begin{align*}
    \eqref{estimacionparaI} & \leq \frac{1}{|B_r(0)|^{1/2}} \|a^1(X,D)\circ \mathfrak{B}^{\frac{n(1-\rho)}{2}}\|_{L^2(\bbR^n)} \|\mathfrak{B}^{-\frac{n(1-\rho)}{2}}(\phi u)\|_{L^2(\bbR^n)}\\
    &\leq \frac{C}{|B_r(0)|^{1/2}} \|a^1\|_{l_0; S_{\rho, \delta}^{-\frac{n(1-\rho)}{2}}} \|u\|_{L^\infty(M)} \|\mathfrak{B}^{-\frac{n(1-\rho)}{2}}(\phi)\|_{L^2(\bbR^n)}\\
    &\leq \frac{C}{|B_r(0)|^{1/2}} \|a^1\|_{l_0; S_{\rho, \delta}^{-\frac{n(1-\rho)}{2}}} \|u\|_{L^\infty(M)} \|\phi\|_{H^{-\frac{n(1-\rho)}{2}}(\bbR^n)},
\end{align*}
 for some integer $l_0$. So it only remains to estimate $\|\phi\|_{H^{-\frac{n(1-\rho)}{2}}(\bbR^n)}$. Using the properties of $\phi$ we obtain by direct computation the following bound:
\begin{align*}
\begin{split}
        \|\phi\|_{H^{-\frac{n(1-\rho)}{2}}(\bbR^n)}^2 &= \int_{\bbR^n} (1+|\nu|)^{-n(1-\rho)} |\widehat{\phi}(\nu)|^2 \, d\nu\\
    &\leq C \int_{|\nu|<r^{1/\rho}} \frac{1}{(1+|\nu|)^{n(1-\rho)}}\, d\nu \\
    & \leq C r^n.
\end{split}
\end{align*}
Coming back to the estimation of \eqref{estimacionparaI}, using property \eqref{normasDeLaParticion} we obtain:
\begin{align*}
    \eqref{estimacionparaI} & \leq \frac{Cr^{n/2}}{|B_r(0)|^{1/2}} \|a^1\|_{l_0; S_{\rho, \delta}^{-\frac{n(1-\rho)}{2}}} \|u\|_{L^\infty(M)}\\
    &\leq C \|a^1\|_{l_0; S_{\rho, \delta}^{-\frac{n(1-\rho)}{2}}} \|u\|_{L^\infty(M)}\\
    & \leq C \|a\|_{l_0; S_{\rho, \delta}^{-\frac{n(1-\rho)}{2}}} \|u\|_{L^\infty(M)}.
\end{align*}
Summarizing, we have proven that
\begin{equation}
    \label{3}
\frac{1}{|B_r(x_0)|} \int_{B_r(x_0)} |a^1(X, D)(\kappa^*(\phi) u)(y) |dy|^{-\kappa}| |dy| \leq C \|a\|_{l_0; S_{\rho, \delta}^{-\frac{n(1-\rho)}{2}}} \|u\|_{L^\infty(M)};
\end{equation}
for some $l_0$ and $0<r\leq \frac{r_0}{4}$, where $\kappa$ is the transformation to local coordinates used in the previous calculation. This completes the estimation for I.

We move on to the estimation of II. Let us write explicitly how II would look like if we calculate the commutator in normal coordinates, i.e we are looking for the symbol of the commutator. We have
\begin{align*}
    \operatorname{II} &= \phi(x) a^1(X,D)u(x) - a^1(X,D)(\phi u)(x)\\
    &= \frac{1}{(2\pi)^n}\left(\phi(x)\int_{\bbR^n} \int_{T_{x_0}^*M}  \Upsilon_x^{1-\kappa}(y)e^{i(x-y)\cdot \zeta} a^1\left(x, \zeta\right) u(y) d\zeta g(y)dy\right.\\
    &\left.-\int_{\bbR^n} \int_{T_{x_0}^*M}  \Upsilon_x^{1-\kappa}(y)e^{i(x-y)\cdot \zeta} a^1\left(x, \zeta\right) \phi(y)u(y) d\zeta g(y)dy\right)\\
    &= \frac{1}{(2\pi)^n}\int_{\bbR^n} \int_{T_{x_0}^*M}  \Upsilon_x^{1-\kappa}(y)e^{i(x-y)\cdot \zeta} a^1\left(x, \zeta\right)(\phi(x)- \phi(y))u(y) d\zeta g(y)dy. 
\end{align*}
We recognize that II is a pseudo-differential operator given by the amplitude 
$$b(y;x,\zeta)= a^1(x,\zeta)(\phi(x) - \phi(y)),$$ 
thus, as we did in the case of $a^0$, by Proposition \ref{AmpliToSym} we have that up to order 1 there exist a symbol $t^1(x,\zeta)$ such that 
\begin{align*}
    t^1(x,\zeta) &= \sum_{|\alpha|=1}D_\zeta^\alpha \nabla_y^\alpha b(y;x,\zeta) \bigg|_{y=x}\\
    &= \sum_{|\alpha|=1}D_\zeta^\alpha a^1(x,\zeta) \nabla_y^\alpha (\phi(x) - \phi(y)) \bigg|_{y=x}\\
    &= \sum_{|\alpha|=1}D_\zeta^\alpha a^1(x,\zeta) p_\alpha^1(x),
\end{align*}
where $p_\alpha^1$ is a bounded function coming from the Taylor expansion of $\phi$. Then 
\begin{align}
\label{controlTaylor2}
    |t^1(x,\zeta)| &= \left|\sum_{|\alpha|=1}D_\zeta^\alpha a^1(x,\zeta) p_\alpha^1(x) \right|\leq C \langle\zeta\rangle_x^{\frac{-n(1-\rho)}{2} - \rho}. 
\end{align}
Now, we use again a partition of unity such that 
\[
t^1(x,\zeta) = \sum_j \iota_j^1(x,\zeta),
\]
where $\iota_j^1$ is supported in $|\zeta|_x \sim 2^jr^{-1}$, here we are using the fact that the support of $a^1$ is $S_1$. From the condition on the supports and \eqref{controlTaylor2} we get for all $l$ that (similar as in the case of $a^0$)
\[
\|\iota_j^1\|_{l; S_{\rho, \delta}^{-\frac{n(1-\rho)}{2}}} \leq \frac{C_l}{2^{j\rho}}  \|a^1\|_{l; S_{\rho, \delta}^{-\frac{n(1-\rho)}{2}}}.
\]
Using Lemma \ref{lem2} on the $\iota_j^1$'s, $l>n/2$ and property \eqref{normasDeLaParticion}, we get 
\begin{align}
\label{4}
\begin{split}
        \|\operatorname{II}\|_{L^\infty(M)} &\leq \sum_j \|t_j(X,D)u\|_{L^\infty(M)}\\
    &\leq C\sum_j  \frac{1}{2^{j\rho}} \|a^1\|_{l; S_{\rho, \delta}^{-\frac{n(1-\rho)}{2}}} \|u\|_{L^\infty(M)}\\
    &\leq C \|a^1\|_{l; S_{\rho, \delta}^{-\frac{n(1-\rho)}{2}}} \|u\|_{L^\infty(M)}\\
    &\leq C \|a\|_{l; S_{\rho, \delta}^{-\frac{n(1-\rho)}{2}}} \|u\|_{L^\infty(M)}.
\end{split}
\end{align}
This completes the estimation for II and finishes the process for $a^1$. 
From \eqref{1}, \eqref{22}, \eqref{23}, \eqref{2}, \eqref{3} and \eqref{4}, by taking the corresponding suprema as in \eqref{defBMO} we conclude that
\[
\|a(X, D)_{loc}u\|_{\operatorname{BMO}(M)}\leq C \|a\|_{l; S_{\rho, \delta}^{-\frac{n(1-\rho)}{2}}}\|u\|_{L^\infty(M)},
\]
completing the proof.
\end{proof}
Due to the abstract duality result of Proposition \ref{dualidadbmo} and Theorem \ref{adjunto} we immediately obtain the following corollary:
\begin{cor}
    Let at least one of the conditions (1)-(3) of Theorem \ref{producto} be fulfilled. Let $a\in S_{\rho, \delta}^{-\frac{n(1-\rho)}{2}}(\nabla)$. Then $a(X, D)_{loc}$ is locally bounded from $H^1(M, \Omega^\kappa)$ to $L^1(M,\Omega^\kappa)$. 
\end{cor}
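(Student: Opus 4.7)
The plan is a pure duality argument combining three ingredients: Theorem \ref{adjunto} on adjoints, Theorem \ref{bmo} just proved, and the duality $H^1(M,\Omega^\kappa)' = \operatorname{BMO}(M,\Omega^{1-\kappa})$ of Proposition \ref{dualidadbmo}.

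First I would invoke Theorem \ref{adjunto}: since $a \in S_{\rho,\delta}^{-n(1-\rho)/2}(\nabla)$, the formal adjoint $a(X,D)^*$ (taken with respect to the natural integration pairing between $\Omega^\kappa$ and $\Omega^{1-\kappa}$) belongs to $\Psi_{\rho,\delta}^{-n(1-\rho)/2}(\Omega^{1-\kappa},\nabla)$, with full symbol still in $S_{\rho,\delta}^{-n(1-\rho)/2}(\nabla)$ and seminorms controlled by those of $a$ (each term in the asymptotic expansion of the adjoint symbol has order $-n(1-\rho)/2 + (\delta-\rho)|\alpha|$, which is at most $-n(1-\rho)/2$). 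Since Conditions (1)-(3) depend only on $\rho$ and $\nabla$, they hold for the adjoint as well, so Theorem \ref{bmo} applies and yields
\[
a(X,D)^*_{loc} : L^\infty(M,\Omega^{1-\kappa}) \longrightarrow \operatorname{BMO}(M,\Omega^{1-\kappa})
\]
with operator norm bounded by a fixed symbol seminorm of $a$.

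For the dualization, given $u \in H^1_{comp}(M,\Omega^\kappa)$ and a smooth cutoff $\chi$ equal to $1$ on a neighborhood of $\operatorname{supp} u$, I would use the identification $L^1(M,\Omega^\kappa)' = L^\infty(M,\Omega^{1-\kappa})$ via the integration pairing to write
\[
\|\chi\, a(X,D)u\|_{L^1} = \sup_{\|v\|_{L^\infty}\leq 1}\left|\int_M a(X,D)u \cdot \chi v\right| = \sup_{v} \left|\langle u, a(X,D)^*(\chi v)\rangle\right|,
\]
and then bound the last expression by $\|u\|_{H^1}\,\|a(X,D)^*(\chi v)\|_{\operatorname{BMO}} \leq C \|u\|_{H^1}\|v\|_{L^\infty}$, using Proposition \ref{dualidadbmo} together with the previous paragraph (noting that $\chi v$ has support in a fixed relatively compact set, so the local constant of the $L^\infty \to \operatorname{BMO}$ bound is uniform in $v$). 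Taking the supremum over $v$ yields the claimed local boundedness.

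The only subtle point is that Proposition \ref{dualidadbmo} identifies $(H^1)' = \operatorname{BMO}$ rather than $H^1 = \operatorname{BMO}'$, so one must use it precisely in the direction $|\langle u, w\rangle| \leq \|u\|_{H^1}\|w\|_{\operatorname{BMO}}$ — a standard estimate that does not require reflexivity. Apart from this, and the routine bookkeeping of the local/compact support conventions absorbed into the cutoff $\chi$, the argument is immediate and requires no new estimate beyond what Theorem \ref{bmo} already provides.
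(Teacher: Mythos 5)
Your proof is correct and takes essentially the same route as the paper, which deduces the corollary by duality, citing Proposition \ref{dualidadbmo} and Theorem \ref{adjunto} (with Theorem \ref{bmo} applied to the adjoint implicitly supplying the $L^\infty \to \operatorname{BMO}$ bound). The paper leaves the dualization unwritten; your expansion correctly tracks the $\kappa$-density degrees, the direction of the $H^1$--$\operatorname{BMO}$ duality, and the cutoff needed to respect the comp/loc conventions.
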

Now we interpolate, utilizing Proposition \ref{interpolacion}, between the result of Theorem \ref{bmo} and the $L^2$ boundedness. 
\begin{thm}
\label{lpParaOrdenFijo}
Let $a\in S_{\rho, \delta}^{-\frac{n(1-\rho)}{2}}(\nabla)$ with $0\leq\delta<\rho\leq 1$. Suppose that at least one of the following conditions is fulfilled:
\begin{enumerate}
    \item $\rho>\frac{1}{2}$;
    \item the connection $\nabla$ is symmetric and $\rho>\frac{1}{3}$;
    \item the connection $\nabla$ is flat.
\end{enumerate}  Then $a(X, D)_{loc}$ is locally bounded from $L^p(M,\Omega^\kappa)$ to $L^p(M,\Omega^\kappa)$ for $1<p<\infty$. Moreover, if $a\in S_{\rho,\delta}^{-\frac{n(1-\rho)}{2}}(\nabla)_{glo}$ then $a(X,D)$ is globally bounded from $L^p(M,\Omega^\kappa)$ to $L^p(M,\Omega^\kappa)$. 
\end{thm}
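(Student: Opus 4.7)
The plan is to combine the three pillars already established: (i) the $L^2$-boundedness via Theorem \ref{L2bound}, (ii) the $L^\infty \to \operatorname{BMO}$-boundedness of Theorem \ref{bmo}, and (iii) the abstract interpolation statement of Proposition \ref{interpolacion}, and then to close the range $1<p<2$ by a duality argument based on Theorem \ref{adjunto} and Proposition \ref{dualidadbmo}.

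First I would invoke Theorem \ref{L2bound} with $s=n(1-\rho)/2$ and $m=-n(1-\rho)/2$: since $-n(1-\rho)/2\le 0$, the operator $a(X,D)$ sends $L^2_{comp}(M,\Omega^\kappa)$ continuously into $H^{n(1-\rho)/2}_{loc}(M,\Omega^\kappa)\subset L^2_{loc}(M,\Omega^\kappa)$, which is the desired local $L^2 \to L^2$ bound; in particular the hypothesis of Proposition \ref{interpolacion} at $p=2$ is satisfied. Combined with the $L^\infty \to \operatorname{BMO}$ bound of Theorem \ref{bmo}, Proposition \ref{interpolacion} yields local boundedness on $L^q(M,\Omega^\kappa)$ for every $q\in(2,\infty)$.

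Next I would handle the range $1<p<2$ by duality. By Theorem \ref{adjunto} the formal adjoint $a(X,D)^*$ belongs to $\Psi^{-n(1-\rho)/2}_{\rho,\delta}(\Omega^{1-\kappa},\nabla)$, and since Conditions (1)--(3) are symmetric in the choice of $\kappa$, the argument of the previous paragraph applies verbatim to $a(X,D)^*$: it is locally bounded on $L^q(M,\Omega^{1-\kappa})$ for every $q\in(2,\infty)$. Via the pairing between $\Omega^\kappa$ and $\Omega^{1-\kappa}$ and the identification $(L^q)'=L^p$ with $1/p+1/q=1$, this transfers to local boundedness of $a(X,D)$ on $L^p(M,\Omega^\kappa)$ for $p\in(1,2)$; putting the two ranges together with the $L^2$ case already obtained gives the full statement for $1<p<\infty$.

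For the global statement under the hypothesis $a\in S^{-n(1-\rho)/2}_{\rho,\delta}(\nabla)_{glo}$, I would split $a(X,D)=a(X,D)_{loc}+a(X,D)_{glo}$ as in Remark \ref{kernels}. The part $a(X,D)_{loc}$ enjoys the local bounds just proved, which become global because the involved compactly supported cutoffs are absorbed after applying a uniform partition of unity (Lemma \ref{lemUnifPar}) together with the finite-overlap property; meanwhile $a(X,D)_{glo}$ is globally bounded on $L^p$ for every $1\le p\le\infty$ by Lemma \ref{lemGlob}. I expect no real obstacle beyond careful bookkeeping: the critical analytic work already lives in Theorems \ref{L2bound}, \ref{bmo} and \ref{adjunto}, so the only delicate point is verifying that the duality between $\Omega^\kappa$ and $\Omega^{1-\kappa}$, used to pass from $L^q$-bounds on the adjoint to $L^p$-bounds on the operator, respects the local/global distinction, which follows from the fact that the cutoff $\chi$ in \eqref{kernelDef} is symmetric in its two arguments.
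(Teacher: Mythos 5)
Your proposal follows exactly the same route as the paper: $L^2$ boundedness via Theorem \ref{L2bound} (using the negative order), $L^\infty\to\mathrm{BMO}$ via Theorem \ref{bmo}, interpolation by Proposition \ref{interpolacion} for $2\le p<\infty$, duality through Theorem \ref{adjunto} and Proposition \ref{dualidadbmo} for $1<p<2$, and Lemma \ref{lemGlob} for the global statement. One small slip: to deduce the $L^2\to L^2$ bound from Theorem \ref{L2bound} you should take $s=0$ (not $s=n(1-\rho)/2$), giving $L^2_{comp}=H^0_{comp}\to H^{n(1-\rho)/2}_{loc}\subset L^2_{loc}$, which is the conclusion you actually wrote; this does not affect the argument.
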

\begin{proof}
We are in a position to apply Theorem \ref{bmo} so that $A=a(X, D)$ is bounded from $L^\infty(M)$ to $\operatorname{BMO}(M)$. Since the order of $A$ is negative it belongs to the class $\Psi_{\rho, \delta}^{0}(\Omega^\kappa,\nabla)$ as well, thus by Theorem \ref{L2bound} it is bounded from $L^2(M)$ to $L^2(M)$. By Proposition \ref{interpolacion} we get the boundedness from $L^p(M)$ to $L^p(M)$ for $2\leq p< \infty$. On the other hand, Theorem \ref{adjunto} together with a similar argument as before give us the boundedness of $A^*$ from $L^p(M)$ to $L^p(M)$ for $2\leq p< \infty$. Therefore, by duality we obtain the boundedness of $A$ on the missing interval $1< p\leq 2$. Finally, the global boundedness follows from this and Lemma \ref{lemGlob}.
\end{proof}
Previous theorem guarantees $L^p$-boundedness for operators of a fixed order, we use the complex interpolation from Stein \cite{Stein} to extend the range of orders and finally get the desired result:
\begin{thm}
\label{lpFinal}
Let $\theta\in [0,\frac{n(1-\rho)}{2})$ and let $a\in S_{\rho, \delta}^{-\theta}(\nabla)$  with $0\leq\delta<\rho\leq 1$. Suppose that at least one of the following conditions is fulfilled:
\begin{enumerate}
    \item $\rho>\frac{1}{2}$;
    \item the connection $\nabla$ is symmetric and $\rho>\frac{1}{3}$;
    \item the connection $\nabla$ is flat.
\end{enumerate}
Then $a(X, D)_{loc}$ is locally bounded from $L^p(M,\Omega^\kappa)$ to $L^p(M,\Omega^\kappa)$ for 
$$\left|\frac{1}{p}-\frac{1}{2}\right|\leq \frac{\theta}{n(1-\rho)}.$$
 Moreover, if $a\in S_{\rho,\delta}^{-\frac{n(1-\rho)}{2}}(\nabla)_{glo}$ then $a(X,D)$ is globally bounded from $L^p(M,\Omega^\kappa)$ to $L^p(M,\Omega^\kappa)$ for the same range of values of $p$. 
\end{thm}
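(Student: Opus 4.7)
My plan is to obtain the result by Stein's analytic interpolation theorem applied to a one-parameter family of operators obtained by composing $a(X,D)$ with complex powers of the Bessel potential $\mathfrak{B}$ introduced in Subsection~\ref{BesselPotentials}. The two endpoints of the interpolation are the $L^2$-boundedness of zero-order pseudo-differential operators (Theorem~\ref{L2bound}) and the $L^\infty\to\operatorname{BMO}$-boundedness at the critical order $-n(1-\rho)/2$ (Theorem~\ref{bmo}).

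Concretely, for $z$ in the closed strip $\{0\leq \operatorname{Re}(z)\leq 1\}$ I would consider the family
\[
A_z \;=\; a(X,D)\,\circ\,\mathfrak{B}^{\,\theta-z\,n(1-\rho)/2}.
\]
By \eqref{besselEnSafarov} and Theorem~\ref{producto} (which applies under each of the Conditions (1)-(3)), $A_z\in\Psi_{\rho,\delta}^{-\operatorname{Re}(z)\,n(1-\rho)/2}(\Omega^\kappa,\nabla)$, with an imaginary-order correction in the symbol. On the left boundary $\operatorname{Re}(z)=0$, Theorem~\ref{L2bound} applied to the order-zero operator $A_z$ yields boundedness on $L^2(M,\Omega^\kappa)$. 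On the right boundary $\operatorname{Re}(z)=1$, Theorem~\ref{bmo} applied to the order-$(-n(1-\rho)/2)$ operator $A_z$ yields boundedness from $L^\infty(M,\Omega^\kappa)$ into $\operatorname{BMO}(M,\Omega^\kappa)$. A crucial point is that both of these boundary norms must grow at most polynomially in $|\operatorname{Im}(z)|$; this follows from the seminorm computation in Example~\ref{cotaDerivadademetrica} (the seminorms $\|\langle\eta\rangle_y^\gamma\|_{l;S_{\rho,\delta}^m}$ are polynomial in $\gamma$), applied to the symbol of $\mathfrak{B}^{\theta - zn(1-\rho)/2}$, together with the composition formula \eqref{asyExp}.

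Stein's complex interpolation theorem, in the form interpolating $L^2\to L^2$ on one side of the strip and $L^\infty\to\operatorname{BMO}$ on the other (valid by Fefferman-Stein $H^1$-$\operatorname{BMO}$ duality, Proposition~\ref{dualidadbmo}), then gives, for every $t\in(0,1)$, boundedness of $A_t$ on $L^{p_t}(M,\Omega^\kappa)$ with $1/p_t = (1-t)/2$. Choosing $t = 2\theta/(n(1-\rho))\in[0,1)$ the complex power collapses to $\mathfrak{B}^0=\operatorname{Id}$, so $A_t = a(X,D)$, and the resulting boundedness is precisely on the range $1/p-1/2 = -\theta/(n(1-\rho))$, i.e.\ $p\in[2,\infty)$ in the desired band. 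For the remaining range $p\in(1,2]$ I would repeat the argument applied to the adjoint: Theorem~\ref{adjunto} puts $a(X,D)^*$ in $\Psi_{\rho,\delta}^{-\theta}(\Omega^{1-\kappa},\nabla)$, so the previous step yields boundedness of $a(X,D)^*$ on $L^{p'}(M,\Omega^{1-\kappa})$ for $p'\geq 2$, and duality $L^p(M,\Omega^\kappa)' = L^{p'}(M,\Omega^{1-\kappa})$ returns the claim for $p\leq 2$.

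The main obstacle I expect is the technical verification of the polynomial-in-$\operatorname{Im}(z)$ growth of both boundary norms, since this is what licenses Stein's interpolation; the composition of $a(X,D)$ with $\mathfrak{B}^{w}$ via Theorem~\ref{producto} introduces the polynomials $P^{(\kappa)}_{\beta,\gamma}(x,\xi)$ and many derivatives of $\langle\xi\rangle_x^w$, and one must track how the seminorms used in Theorem~\ref{bmo} depend on $w$. Example~\ref{cotaDerivadademetrica} handles the basic ingredient, and the remaining bookkeeping amounts to applying it term-by-term in \eqref{asyExp}. Finally, for the global statement, composition with $\mathfrak{B}^\lambda\in\Psi_{1,0}^\lambda$ preserves membership in the class $S_{\rho,\delta}^{-\theta}(\nabla)_{glo}$ (the off-diagonal kernel of $\mathfrak{B}^\lambda$ decays rapidly, so Assumption~\ref{assu} is inherited), and Lemma~\ref{lemGlob} converts the local boundedness of the full $a(X,D)$ into global boundedness on $L^p(M,\Omega^\kappa)$.
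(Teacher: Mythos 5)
Your strategy is recognizably the paper's strategy: Stein complex interpolation in the strip $0\le\operatorname{Re}z\le 1$, with the $L^2$ bound (Theorem~\ref{L2bound}) at $\operatorname{Re}z=0$, the $L^\infty\to\operatorname{BMO}$ bound (Theorem~\ref{bmo}) at $\operatorname{Re}z=1$, followed by adjoint and duality for $p<2$. The difference is how the analytic family is built. The paper stays purely at the symbol level: it sets $\mathfrak{a}_z := e^{z^2}\,a(x,\eta)\,\langle\eta\rangle_y^{\theta + \frac{n(1-\rho)}{2}(z-1)}$, so the boundary seminorms follow from the elementary computation of Example~\ref{cotaDerivadademetrica} applied directly (no composition theorem needed), and the damping factor $e^{z^2}$ makes them uniformly bounded in $\operatorname{Im}z$. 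You instead compose $a(X,D)$ with complex Bessel powers $\mathfrak{B}^{\theta - z\frac{n(1-\rho)}{2}}$, which is correct in spirit but technically heavier: you must invoke Theorem~\ref{producto} and track the $z$-dependence of every term of the asymptotic expansion \eqref{asyExp}, and you must also justify that the complex Bessel power actually lies in the Safarov class (the paper's statement \eqref{besselEnSafarov} is only recorded for real exponents). You rightly flag this bookkeeping as the main obstacle --- the paper's symbol-level weighting avoids it entirely, which is precisely what it buys. Two small repairs to your sketch: (i) the interpolation at $t = 2\theta/(n(1-\rho))$ yields boundedness only at the extremal exponent $p$ with $1/2-1/p = \theta/(n(1-\rho))$; to obtain the full band $|1/p-1/2|\le\theta/(n(1-\rho))$ you must then interpolate (Riesz--Thorin) between this and the $L^2$ bound of $a(X,D)$ itself, as the paper does explicitly; (ii) the closing remark about composition with $\mathfrak{B}^\lambda$ preserving $S_{\rho,\delta}^{-\theta}(\nabla)_{glo}$ is unnecessary --- the global boundedness follows by applying Lemma~\ref{lemGlob} directly to the global part of $a(X,D)$ once local $L^p$-boundedness is in hand.
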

\begin{proof}
We are going to use a complex interpolation argument, meaning that we are going to define a family of operators parametrized by a complex variable lying in a certain strip and proof the boundedness of its symbols. Define the complex parametrized family of symbols 
$$\mathfrak{a}_z(y,\eta) := e^{z^2}a(x,\eta)\langle\eta\rangle_{y}^{\theta + \frac{n(1-\rho)}{2}(z-1)}$$
for $z$ belonging to the strip $\Bar{S} =\{z : 0\leq \operatorname{Re}z\leq 1\}$ and $a\in S_{\rho, \delta}^{-\theta}(\nabla)$. Set $\{\mathfrak{A}_z\}_{0\leq \operatorname{Re}z\leq 1}$ the corresponding family of pseudo-differential operators. Note that for a fixed $z$ each of these operators is of order $0$. 

Let  $z= \varepsilon + i\mu$, $\varepsilon\in[0,1]$ and $\mu\in\bbR$. Following Example \ref{cotaDerivadademetrica} we compute the following seminorm for $l>n/2$: 
\begin{align*}
    \|\mathfrak{a}_z\|_{l; S_{\rho, \delta}^{0}} &= \sup_{|\alpha|+ q \leq l, (y,\eta)} \frac{\left|\partial_{\eta}^{\alpha} \nabla_{i_{1}} \ldots \nabla_{i_{q}} (e^{z^2}a(x,\eta)\langle\eta\rangle_{y}^{\theta + \frac{n(1-\rho)}{2}(z-1)})\right|}{\langle\eta\rangle_{y}^{\delta q-\rho|\alpha|}}\\
    &\leq \sup_{|\alpha|+ q \leq l, (y,\eta)} \frac{e^{\varepsilon^2 - \mu^2} \left|p\left(\theta + \frac{n(1-\rho)}{2}(z-1)\right)\right| \langle\eta\rangle_{y}^{\theta + \frac{n(1-\rho)}{2}(\varepsilon-1) - l}\left|\partial_{\eta}^{\alpha} \nabla_{i_{1}} \ldots \nabla_{i_{q}} a(x,\eta)\right|}{\langle\eta\rangle_{y}^{\delta q-\rho|\alpha|}}\\
    &\leq e^{\varepsilon^2 - \mu^2} \left|p\left(\theta + \frac{n(1-\rho)}{2}(z-1)\right)\right| \sup_{|\alpha|+ q \leq l, (y,\eta)} \frac{\left|\partial_{\eta}^{\alpha} \nabla_{i_{1}} \ldots \nabla_{i_{q}} a(x,\eta)\right|}{\langle\eta\rangle_{y}^{-\theta + \delta q-\rho|\alpha|}}\\
    &\leq e^{\varepsilon^2 - \mu^2} \left|p\left(\theta + \frac{n(1-\rho)}{2}(z-1)\right)\right| \|a\|_{l; S_{\rho, \delta}^{-\theta}},
\end{align*}
where $p$ is a polynomial of degree $l$. From the conditions imposed in $z$ and the fact that $l>n/2$ one can bound the product of the first two terms as follows: 
\begin{align*}
    e^{\varepsilon^2 - \mu^2} \left|p\left(\theta + \frac{n(1-\rho)}{2}(z-1)\right)\right| &\leq  C_l e^{1 - \mu^2} \left|\theta + \frac{n(1-\rho)}{2}(z-1)\right|^{l}\\
    &\leq C_l e^{ - \mu^2}|z|^l\\
    &\leq C_l \frac{(1 + |\mu|^2)^{l/2}}{e^{\mu^2}}\\
    &\leq C_l. 
\end{align*}
Therefore, for an appropriate choose of $l$ we can control certain seminorms of the complex family of symbols by the ones of the initial symbol, specifically we obtained the following inequality
\begin{equation}
    \|\mathfrak{a}_z\|_{l; S_{\rho, \delta}^{0}} \leq C_l \|a\|_{l; S_{\rho, \delta}^{-\theta}}. 
\end{equation}
By Theorem \ref{L2bound} there exist $C>0$ and an integer $l_0$ such that 
\begin{equation}
    \|\mathfrak{A}_zu\|_{L^2(M)}\leq C \|\mathfrak{a}_z\|_{l_0; S_{\rho, \delta}^{0}} \|u\|_{L^2(M)}. 
\end{equation}
Putting together the last two inequalities we get 
\begin{equation}
\label{cotaL2Compl}
    \|\mathfrak{A}_zu\|_{L^2(M)}\leq C \|a\|_{l_0; S_{\rho, \delta}^{-\theta}} \|u\|_{L^2(M)}.
\end{equation}
Moreover, notice that the family of operators $\{\mathfrak{A}_z\}_{0\leq \operatorname{Re}z\leq 1}$ is analytic in the strip $S =\{z : 0< \operatorname{Re}z< 1\}$ (just exponential dependence on the symbol) and continuous on the closure $\Bar{S}$. These observations and inequality \eqref{cotaL2Compl} imply that the family $\{\mathfrak{A}_z\}_{0\leq \operatorname{Re}z\leq 1}$ defines an analytic family of operators uniformly bounded from $L^2(M)$ to $L^2(M)$, so the first condition of the complex interpolation procedure is fulfilled. We proceed with the other conditions. On one hand, using \eqref{cotaL2Compl} we have for the right border of the strip that
\[
\sup_{-\infty<\mu<+\infty} \|\mathfrak{A}_{1+i\mu}u\|_{L^2}\leq C \|a\|_{l_0; S_{\rho, \delta}^{-\theta}} \|u\|_{L^2}, \quad u\in L^2(M),
\]
where $C$ is independent of $u$. On the other hand, for the left border of the strip the family's symbols take the form 
\[
\mathfrak{a}_{i\mu}(y,\eta) := e^{-\mu^2}a(x,\eta)\langle\eta\rangle_{y}^{\theta} \langle\eta\rangle_{y}^{-\frac{n(1-\rho)}{2}} \langle\eta\rangle_{y}^{i\frac{\mu n(1-\rho)}{2}},
\]
and performing a similar estimation as before we can prove that $\mathfrak{a}_{i\mu}\in S_{\rho, \delta}^{-\frac{n(1-\rho)}{2}}(\nabla)$, moreover 
\[
\|\mathfrak{a}_{i\mu}\|_{l; S_{\rho, \delta}^{-\frac{n(1-\rho)}{2}}} \leq C \|a\|_{l; S_{\rho, \delta}^{-\theta}}
\]
for $l>n/2$ and $C>0$ independent of $\mu$. Thus, we are in position to apply Theorem \ref{bmo}, which give us 
\[
\|\mathfrak{A}_{i\mu}u\|_{\operatorname{BMO}(M)}\leq C \|a\|_{l; S_{\rho, \delta}^{-\theta}}\|u\|_{L^\infty(M)},
\]
and with this inequality we complete the conditions to apply the complex interpolation. From this abstract procedure we obtain the following $L^p-L^p$ boundedness for operators $\mathfrak{A}_{\varepsilon}$ (sub-family of operators with imaginary part equal to zero):
\begin{equation}
\label{lpPruebaCom}
    \|\mathfrak{A}_{\varepsilon}u\|_{L^p(M)}\leq C_p\|a\|_{l; S_{\rho, \delta}^{-\theta}}\|u\|_{L^p(M)},
\end{equation}
where $p=\frac{2}{\varepsilon}$ and $0<\varepsilon\leq1$ (each element of the family is bounded in a different $L^p$ space). Remember that these operators $\mathfrak{A}_{\varepsilon}$ have symbols 
\[
\mathfrak{a}_\varepsilon(y,\eta) := e^{\epsilon^2}a(x,\eta)\langle\eta\rangle_{y}^{\theta + \frac{n(1-\rho)}{2}(\varepsilon-1)}, 
\]
and notice that we can get $L^p$ boundedness for the initial operator $a(X,D)$ of order $\theta\in [0,\frac{n(1-\rho)}{2})$ since obviously there exists an $0<\varepsilon_0\leq 1$ such that 
\[
\theta = \frac{n(1-\rho)}{2}(1-\varepsilon_0).
\]
Thus it follows from \eqref{lpPruebaCom} that for $p=\frac{2}{\varepsilon_0}$ we have
\begin{equation}
\label{estimativoLpParainterpolar}
    \|a(X,D)u\|_{L^p}\leq C_p\|a\|_{l; S_{\rho, \delta}^{-\theta}}\|u\|_{L^p}.
\end{equation}
Observe that we can rewrite $\theta$ in terms of $p$
\[
\theta = \frac{n(1-\rho)}{2}(1-\varepsilon_0) = n(1-\rho)\left(\frac{1}{2}-\frac{1}{p}\right).
\]
By interpolation of the $L^2$ estimate coming from Theorem \ref{L2bound} and the $L^{2/\varepsilon_0}$ estimate \eqref{estimativoLpParainterpolar} we obtain the $L^p$-boundedness for values of $p$ satisfying 
\[
\frac{1}{2}-\frac{1}{p}\leq \frac{\theta}{n(1-\rho)}. 
\]
As in the proof of Theorem \ref{lpParaOrdenFijo} a duality argument give us the desired result on the remaining interval
\[
\frac{1}{p}-\frac{1}{2}\leq \frac{\theta}{n(1-\rho)}.
\]
Therefore, we have proved that $a(X,D): L^p(M)\to L^p(M)$ is bounded for $p$ verifying  
$$\left|\frac{1}{p}-\frac{1}{2}\right|\leq \frac{\theta}{n(1-\rho)},$$
completing the proof. 
\end{proof}
\begin{rem}
This result is sharp in the following sense: Let at least one of the conditions (1)-(3) of Theorem \ref{producto} be fulfilled. Let $\theta\in [0,\frac{n(1-\rho)}{2})$ and suppose $p$ satisfies 
$$\left|\frac{1}{p}-\frac{1}{2}\right|> \frac{\theta}{n(1-\rho)}.$$ 
Let $x_0\in \bbS^1$ be fixed, then the symbol 
$$a(x,\zeta) = a_{\rho\theta}(\zeta) = \frac{e^{i|\eta|_{x_0}^{1-\rho}}}{1 + |\eta|_{x_0}^\theta}\in S_{\rho,0}^{-\theta}(\nabla)$$
produces an operator $a_{\alpha\beta}(D)$ which is unbounded from $L^p(\bbS^1)$ to $L^p(\bbS^1)$. This is just the classical counterexample of Hardy-Littlewood-Hirschman-Wainger \cite{Counter} in the context of these classes. 
\end{rem}

\begin{rem}
\label{rem1}
Under the first condition $\rho>1/2$ we recover the classical result on any manifold; under the second condition the connection $\nabla$ is symmetric and $\rho>\frac{1}{3}$ we extend the result to any manifold since we can always find a symmetric connection (not necessarily the Levi-Civita connection since we do not assume metricity); under the third condition the connection $\nabla$ is flat we extend further the classical result, but not to any manifold $M$ since flatness is a major constraint. Nevertheless, since we do not assume metricity we do not have only finite number of  examples per dimension that satisfy that condition. In the next figure we compare the different values of $p$ for three different values of $\rho$; $\rho>1/2$ which gives $\frac{n}{4}$ as upper bound for $\theta$, $\rho>1/3$ which gives $\frac{7n}{20}$ as upper bound for $\theta$, and $\rho>0$ which gives $\frac{n}{2}$ as upper bound for $\theta$:
\begin{tabular}{cc}
   \resizebox{0.39\textwidth}{!}{%
\begin{circuitikz}
\tikzstyle{every node}=[font=\small]
\draw [->, >=Stealth] (0,0) .. controls (2,0) and (4,0) .. (6.3,0);
\draw [->, >=Stealth] (0,0) .. controls (0,5) and (0,5) .. (0,6.3);
\node [font=\normalsize] at (-1,6) {$\frac{1}{p}$};
\node [font=\small] at (6,-0.5) {$\theta$};
\node [font=\normalsize] at (-1,5) {$1$};
\node [font=\small] at (5,-0.5) {$\frac{n}{2}$};
\node [font=\small] at (-1,2.5) {$\frac{1}{2}$};
\node [font=\small] at (2.5,-0.5) {$\frac{n}{4}$};
\node [font=\small] at (3.5,-0.5) {$\frac{7n}{20}$};
\node [font=\small] at (-1,-0.5) {$0$};

\draw [color=red, dashed](2.5,0) to[short] (2.5,5);
\draw [](0,5) to[short] (5,5);
\draw [color=red](0,2.5) to[short] (2.5,5);
\draw [color=red](0,2.5) to[short] (2.5,0);
\fill[color={rgb,255:red,255; green,204; blue,204}]  (0,2.5) -- (2.5,5) -- (2.5,0) -- cycle;
[
\end{circuitikz}
} & \resizebox{0.39\textwidth}{!}{%
\begin{circuitikz}
\tikzstyle{every node}=[font=\small]
\draw [->, >=Stealth] (0,0) .. controls (2,0) and (4,0) .. (6.3,0);
\draw [->, >=Stealth] (0,0) .. controls (0,5) and (0,5) .. (0,6.3);
\node [font=\normalsize] at (-1,6) {$\frac{1}{p}$};
\node [font=\small] at (6,-0.5) {$\theta$};
\node [font=\normalsize] at (-1,5) {$1$};
\node [font=\small] at (5,-0.5) {$\frac{n}{2}$};
\node [font=\small] at (-1,2.5) {$\frac{1}{2}$};
\node [font=\small] at (2.5,-0.5) {$\frac{n}{4}$};
\node [font=\small] at (3.5,-0.5) {$\frac{7n}{20}$};
\node [font=\small] at (-1,-0.5) {$0$};

\draw [color=green, dashed](3.5,0) to[short] (3.5,5);
\draw [](0,5) to[short] (5,5);
\draw [color=green](0,2.5) to[short] (3.5,5);
\draw [color=green](0,2.5) to[short] (3.5,0);
\fill[color={rgb,255:red,204; green,255; blue,204}]  (0,2.5) -- (3.5,5) -- (3.5,0) -- cycle;
[
\end{circuitikz}}\\
\end{tabular}

\begin{figure}[!ht]
\centering
\resizebox{0.39\textwidth}{!}{%
\begin{circuitikz}
\tikzstyle{every node}=[font=\small]
\draw [->, >=Stealth] (0,0) .. controls (2,0) and (4,0) .. (6.3,0);
\draw [->, >=Stealth] (0,0) .. controls (0,5) and (0,5) .. (0,6.3);
\node [font=\normalsize] at (-1,6) {$\frac{1}{p}$};
\node [font=\small] at (6,-0.5) {$\theta$};
\node [font=\normalsize] at (-1,5) {$1$};
\node [font=\small] at (5,-0.5) {$\frac{n}{2}$};
\node [font=\small] at (-1,2.5) {$\frac{1}{2}$};
\node [font=\small] at (2.5,-0.5) {$\frac{n}{4}$};
\node [font=\small] at (3.5,-0.5) {$\frac{7n}{20}$};
\node [font=\small] at (-1,-0.5) {$0$};

\draw [color=blue, dashed](5,0) to[short] (5,5);
\draw [](0,5) to[short] (5,5);
\draw [color=blue](0,2.5) to[short] (5,5);
\draw [color=blue](0,2.5) to[short] (5,0);
\fill[color={rgb,255:red,204; green,204; blue,255}]  (0,2.5) -- (5,5) -- (5,0) -- cycle;
[
\end{circuitikz}
}%
\label{grafica}
\caption{In red the possible values of $p$ when $\rho>1/2$, in green the possible values of $p$ when $\rho>1/3$ and $\nabla$ is symmetric, in blue the possible values of $p$ when $\rho>0$ and $\nabla$ is flat. }
\end{figure}
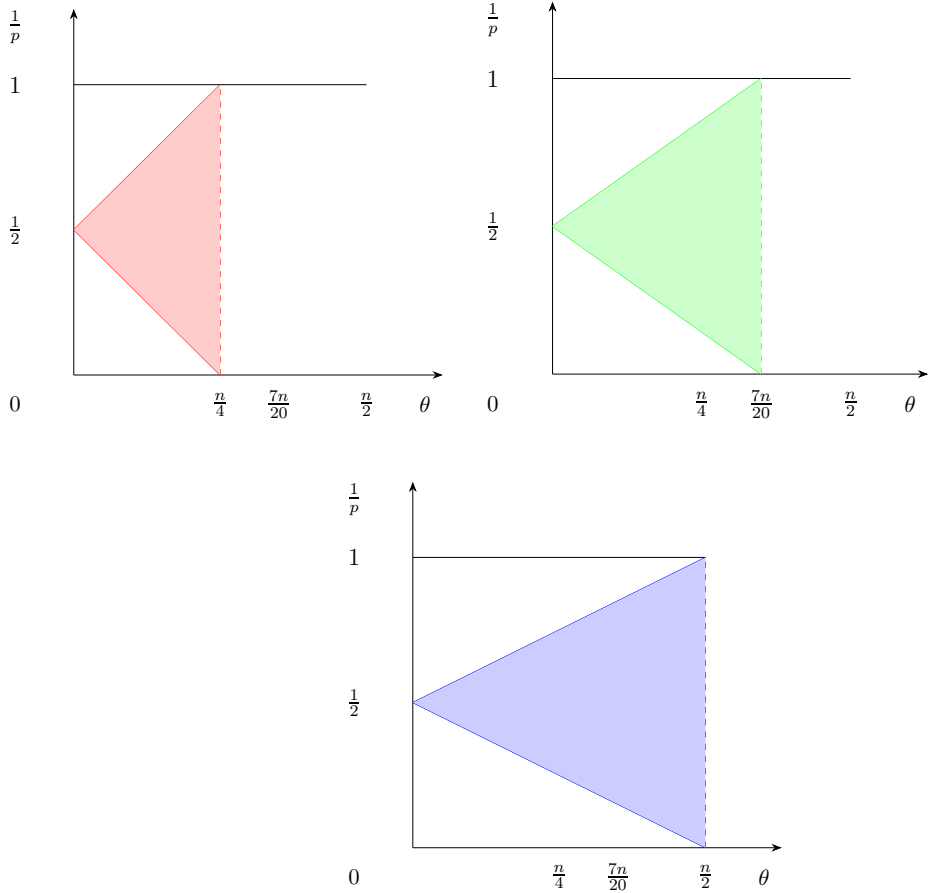
Moreover, notice that Theorem \ref{lpParaOrdenFijo} is giving a result in the border case $\theta=\frac{n(1-\rho)}{2}$ for $1<p<\infty$. 
\end{rem}
\subsection{Further consequences}
In this subsection we apply our main result Theorem \ref{lpFinal} combined with standard arguments regarding Bessel potentials to get estimates in several different function spaces. 
\subsubsection{Sobolev and Besov boundedness}
First of all, we can have boundedness on Sobolev spaces $L_s^p(M)$:
\begin{cor}
\label{cotaSobolev}
Let at least one of the conditions (1)-(3) of Theorem \ref{lpFinal} be fulfilled. Let $\theta\in\bbR$ and $a\in S_{\rho, \delta}^{-\theta}(\nabla)$. Let $s_1,s_2\in\bbR$ such that
    \begin{equation}
        -\frac{n(1-\rho)}{2}<-\theta-s_1+s_2\leq 0.
    \end{equation}
    Then $a(X, D)_{loc}$ is locally bounded from $L_{s_1}^p(M,\Omega^\kappa)$ to $L_{s_2}^p(M,\Omega^\kappa)$ for 
$$\left|\frac{1}{p}-\frac{1}{2}\right|\leq \frac{\theta}{n(1-\rho)}.$$ 
Moreover, if $a\in S_{\rho, \delta}^{-\theta}(\nabla)_{glo}$ then $a(X,D)$ is globally bounded from $L_{s_1}^p(M,\Omega^\kappa)$ to $L_{s_2}^p(M,\Omega^\kappa)$ for the same range of values of $p$. 
\end{cor}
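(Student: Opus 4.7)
The plan is to reduce this Sobolev-to-Sobolev mapping property to the $L^p$-to-$L^p$ result already established in Theorem \ref{lpFinal}, using the Bessel potential $\mathfrak{B}=(1-L)^{1/2}$ as a sandwiching device, in the spirit of Remark \ref{NormaBessel}. Recall from Subsection \ref{BesselPotentials} and \eqref{besselEnSafarov} that $\mathfrak{B}^\lambda\in\Psi_{\rho,\delta}^{\lambda}(\Omega^\kappa,\nabla)$ for every $\lambda\in\bbR$ and every connection $\nabla$; by definition $\|u\|_{L^p_s}\sim\|\mathfrak{B}^s u\|_{L^p}$, so that $\mathfrak{B}^{-s}$ is an isomorphism $L^p\to L^p_s$.

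Set $\theta':=\theta+s_1-s_2$, which by the hypothesis $-n(1-\rho)/2<-\theta-s_1+s_2\le 0$ lies in $[0,n(1-\rho)/2)$. First I would form the composition
\[
T:=\mathfrak{B}^{s_2}\circ a(X,D)\circ\mathfrak{B}^{-s_1}.
\]
Because one of Conditions (1)--(3) of Theorem \ref{producto} holds, and since Bessel potentials of any real order live in Safarov's calculus, the composition theorem yields $T\in\Psi_{\rho,\delta}^{-\theta'}(\Omega^\kappa,\nabla)$. Since $\theta'\in[0,n(1-\rho)/2)$, Theorem \ref{lpFinal} applies to $T$ and gives local $L^p$-boundedness in the range $|1/p-1/2|\le \theta'/(n(1-\rho))$, which by the hypothesis on $s_1,s_2$ covers precisely the range stated in the corollary. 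The Sobolev estimate then follows by the chain
\[
\|a(X,D)u\|_{L^p_{s_2}}\sim \|\mathfrak{B}^{s_2}a(X,D)u\|_{L^p}=\|T(\mathfrak{B}^{s_1}u)\|_{L^p}\lesssim\|\mathfrak{B}^{s_1}u\|_{L^p}\sim\|u\|_{L^p_{s_1}},
\]
taking $u\in L^p_{s_1,\mathrm{comp}}$ and noting that the image lies in $L^p_{s_2,\mathrm{loc}}$.

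The main technical obstacle is the proper-support requirement in Theorem \ref{producto}: neither $a(X,D)$ nor $\mathfrak{B}^{\pm s_i}$ is automatically properly supported. To handle this, I would decompose each factor as a properly supported representative plus a smoothing remainder, using the standard cutoff procedure, and absorb the smoothing pieces (which map into $C^\infty$ and hence are harmless for both local Sobolev boundedness and, under the global decay Assumption \ref{assu}, for the global estimates). For the global statement, one additionally checks that if $a\in S_{\rho,\delta}^{-\theta}(\nabla)_{glo}$, then the Schwartz kernel of $T$ satisfies Assumption \ref{assu} as well; this reduces to verifying that Bessel potentials possess the required off-diagonal decay on bounded-geometry manifolds (exponential or polynomial depending on whether $K_0>0$ or $K_0=0$), which follows from the functional-calculus representation of $(1-L)^{-s/2}$ mentioned after Assumption \ref{assu} and from the Cheeger--Gromov--Taylor heat-kernel bounds. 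Granting this, the global $L^p$-boundedness of $T$ is ensured by Lemma \ref{lemGlob}, and the argument of the preceding display upgrades to a global Sobolev estimate.
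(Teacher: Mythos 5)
Your proof is correct and follows essentially the same route as the paper: conjugate $a(X,D)$ by the Bessel potentials $\mathfrak{B}^{s_2}$ and $\mathfrak{B}^{-s_1}$, use \eqref{besselEnSafarov} together with Theorem \ref{producto} to place the composition in $\Psi_{\rho,\delta}^{-\theta-s_1+s_2}(\Omega^\kappa,\nabla)$, and then apply Theorem \ref{lpFinal} to that zero-order-range operator. Your additional remarks on proper support and on the off-diagonal decay of the Bessel potential kernel address technicalities the paper passes over silently, but they do not constitute a different argument.
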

\begin{proof}
For $u\in L_{s_2}^p(M)$ we want to prove the estimate 
\[
\|a(X,D) u\|_{L_{s_1}^p(M)} \leq C \|u\|_{L_{s_2}^p(M)},
\]
but due to Bessel potentials properties this inequality is equivalent to 
\[
\|\mathfrak{B}^{s_2} a(X,D) \mathfrak{B}^{-s_1} u\|_{L^p} \leq C \|u\|_{L^p}. 
\]
By \eqref{besselEnSafarov} and Theorem \ref{producto}, $\mathfrak{B}^{s_2} a(X,D) \mathfrak{B}^{-s_1}\in \Psi_{\rho, \delta}^{-\theta-s_1+s_2}\left(\Omega^{\kappa}, \nabla\right)$, which by hypothesis such order is in the valid range of Theorem \ref{lpFinal}, hence the result follows. 
\end{proof}
\begin{rem}
    Notice that if $\theta\in [0,\frac{n(1-\rho)}{2})$ and we are given any $s\in\bbR$, we can set $s_1=s_2=s$ and the previous result will always hold. 
\end{rem}
Moreover, we can have boundedness on Besov spaces $B_{p,q}^s(M)$:
\begin{cor}
   Let at least one of the conditions (1)-(3) of Theorem \ref{lpFinal} be fulfilled. Let $\theta\in [0,\frac{n(1-\rho)}{2}), s\in \bbR, 0<q\leq \infty$, and $a\in S_{\rho, \delta}^{-\theta}(\nabla)$. Then $a(X, D)_{loc}$ is locally bounded from $B_{p,q}^s(M,\Omega^\kappa)$ to $B_{p,q}^s(M,\Omega^\kappa)$ for 
$$\left|\frac{1}{p}-\frac{1}{2}\right|\leq \frac{\theta}{n(1-\rho)}.$$
Moreover, if $a\in S_{\rho, \delta}^{-\theta}(\nabla)_{glo}$ then $a(X,D)$ is globally bounded from $B_{p,q}^s(M,\Omega^\kappa)$ to $B_{p,q}^s(M,\Omega^\kappa)$ for the same range of values of $p$. 
\end{cor}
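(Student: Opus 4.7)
The idea is to combine the Sobolev-space boundedness already established in Corollary~\ref{cotaSobolev} with the real interpolation identity in Corollary~\ref{interoplacionSobolevBesov}, exploiting the fact that Besov spaces arise as real interpolation spaces between two Sobolev spaces of different smoothness. The proof reduces to an abstract interpolation step once the two endpoint estimates are in place.

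First, fix auxiliary exponents $\sigma_0 < s < \sigma_1$ and $\Theta \in (0,1)$ with $s = (1-\Theta)\sigma_0 + \Theta\sigma_1$; e.g.\ $\sigma_0 = s-1$, $\sigma_1 = s+1$, $\Theta = 1/2$. Since by hypothesis $\theta < n(1-\rho)/2$ strictly, the admissible range $|1/p - 1/2|\leq \theta/(n(1-\rho))$ lies inside $(1,\infty)$, which is precisely the range of $p$ required by Corollary~\ref{interoplacionSobolevBesov}.

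Next, apply Corollary~\ref{cotaSobolev} twice, with the diagonal choices $s_1 = s_2 = \sigma_0$ and $s_1 = s_2 = \sigma_1$. In each case the hypothesis $-n(1-\rho)/2 < -\theta \leq 0$ holds because $\theta \in [0, n(1-\rho)/2)$, and at least one of conditions (1)--(3) is in force. Hence $a(X,D)_{loc}$ is locally bounded from $L^p_{\sigma_i}(M,\Omega^\kappa)$ into itself for $i=0,1$ and for every $p$ in the prescribed range.

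Finally, Corollary~\ref{interoplacionSobolevBesov} gives the identification
\[
\bigl(L^p_{\sigma_0}(M,\Omega^\kappa),\, L^p_{\sigma_1}(M,\Omega^\kappa)\bigr)_{\Theta,q} \;=\; B^s_{p,q}(M,\Omega^\kappa),
\]
so the standard interpolation property of bounded operators on real interpolation couples transfers the two endpoint Sobolev estimates to a bounded map on $B^s_{p,q}(M,\Omega^\kappa)$, yielding the local statement. For the global claim, replace local Sobolev boundedness with the global half of Corollary~\ref{cotaSobolev} (valid under the extra assumption $a \in S^{-\theta}_{\rho,\delta}(\nabla)_{glo}$) and run the same interpolation argument. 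I do not anticipate a substantive obstacle: all ingredients are already established in the preliminaries, and the only minor point to verify is that the allowed $p$ lies strictly in $(1,\infty)$, which follows from the strict inequality $\theta < n(1-\rho)/2$.
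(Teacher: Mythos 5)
Your proposal is correct and matches the paper's proof essentially step for step: fix $\sigma_0<s<\sigma_1$ with $s=(1-\Theta)\sigma_0+\Theta\sigma_1$, invoke Corollary~\ref{cotaSobolev} twice on the diagonal $s_1=s_2=\sigma_i$, interpolate by the real method, and identify the interpolation space as $B^s_{p,q}(M,\Omega^\kappa)$ via Corollary~\ref{interoplacionSobolevBesov}. No substantive difference from the paper.
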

\begin{proof}
Let $0<\Theta<1$ and $-\infty< s_0 <s_1 < \infty$ such that 
\[
s = (1-\Theta)s_0+\Theta s_1.
\]
Thus by Corollary \ref{cotaSobolev} we have that $a(X,D)$ is bounded from $L_{s_0}^p(M)$ to $L_{s_0}^p(M)$, and simultaneously from $L_{s_1}^p(M)$ to $L_{s_1}^p(M)$. Now, it follows from real interpolation that the operator
\[
a(X,D): (L_{s_0}^p(M), L_{s_1}^p(M))_{\Theta, q}\to (L_{s_0}^p(M), L_{s_1}^p(M))_{\Theta, q}
\]
is bounded for $0<q\leq \infty$. Hence, Corollary \ref{interoplacionSobolevBesov} gives us the desired result. 
\end{proof}

\subsubsection{\texorpdfstring{$L^p-L^q$}{L}-estimates}
Finally, we conclude this work by proving estimations of the type
\begin{equation}\label{pq}
    \|Au\|_{L^q(M)}\leq C \|u\|_{L^q(M)}, 
\end{equation}
i.e. $L^p-L^q$ estimates, again using techniques involving Bessel potentials. 
\begin{cor}\label{corlplp}
Let at least one of the conditions (1)-(3) of Theorem \ref{lpFinal} be fulfilled. Let $1<p\leq2\leq q<\infty$, $\theta\in\bbR$, and $a\in S_{\rho, \delta}^{-\theta}(\nabla)$. Then $a(X, D)_{loc}$ is locally bounded from $L^p(M,\Omega^\kappa)$ to $L^q(M,\Omega^\kappa)$ if
\begin{equation}\label{LPLQ1}
    n\left(\frac{1}{p}-\frac{1}{q}\right)\leq -\theta. 
\end{equation}
Moreover, if $a\in S_{\rho, \delta}^{-\theta}(\nabla)_{glo}$ then $a(X,D)$ is globally bounded from $L^p(M,\Omega^\kappa)$ to $L^q(M,\Omega^\kappa)$ for the same range of values of $p$ and $q$. 
\end{cor}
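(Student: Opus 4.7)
The plan is to factor the operator through an $L^2$ estimate using Bessel potentials, in the spirit of Remark \ref{NormaBessel} and the proof of Corollary \ref{cotaSobolev}, and then invoke Sobolev embedding (Theorem \ref{sobolev}) on both sides to bridge from $L^p$ to $L^2$ and from $L^2$ to $L^q$.

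Concretely, given $1<p\leq 2\leq q<\infty$, I would set
\[
s_1 := n\left(\frac{1}{p}-\frac{1}{2}\right)\geq 0,\qquad s_2 := n\left(\frac{1}{2}-\frac{1}{q}\right)\geq 0,
\]
so that the hypothesis \eqref{LPLQ1} becomes $s_1+s_2-\theta\leq 0$. Then I would write the algebraic identity
\[
a(X,D) \;=\; \mathfrak{B}^{-s_2}\,\circ\,\bigl(\mathfrak{B}^{s_2}\,a(X,D)\,\mathfrak{B}^{s_1}\bigr)\,\circ\,\mathfrak{B}^{-s_1},
\]
and call the middle factor $T$. By \eqref{besselEnSafarov} together with the composition result Theorem \ref{producto} (this is where at least one of the conditions (1)--(3) is required), we have $T\in\Psi_{\rho,\delta}^{s_1+s_2-\theta}(\Omega^\kappa,\nabla)$. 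Since its order is nonpositive, $T\in\Psi_{\rho,\delta}^{0}(\Omega^\kappa,\nabla)$, and Theorem \ref{L2bound} (with $s=0$) gives that $T$ is bounded from $L^2_{comp}(M,\Omega^\kappa)$ to $L^2_{loc}(M,\Omega^\kappa)$.

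It then remains to bound the outer Bessel potentials. By Theorem \ref{sobolev}, since $n(1/p-1/2)=s_1$ and $n(1/2-1/q)=s_2$, the operators
\[
\mathfrak{B}^{-s_1}\colon L^p(M,\Omega^\kappa)\to L^2(M,\Omega^\kappa),\qquad \mathfrak{B}^{-s_2}\colon L^2(M,\Omega^\kappa)\to L^q(M,\Omega^\kappa)
\]
are bounded. Chaining the three maps yields the desired local boundedness $a(X,D)_{loc}\colon L^p_{comp}(M,\Omega^\kappa)\to L^q_{loc}(M,\Omega^\kappa)$.

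For the global statement, the main issue to verify is that the kernel of $T$ away from the diagonal still satisfies Assumption \ref{assu} whenever $a\in S_{\rho,\delta}^{-\theta}(\nabla)_{glo}$; the Bessel potentials $\mathfrak{B}^{\pm s}$ have kernels with exponential/polynomial decay away from the diagonal (as discussed after Assumption \ref{assu}, cf.\ \cite{chee}), and composing kernels with such decay preserves the decay rate required by Assumption \ref{assu}, so $T$ is globally $L^2$-bounded by Theorem \ref{L2bound} and the outer Bessel potentials are globally bounded between the corresponding $L^p$ and $L^q$ spaces by Theorem \ref{sobolev}. I expect this kernel bookkeeping for the global version (propagating Assumption \ref{assu} through the composition with Bessel potentials) to be the one slightly delicate point; the purely local statement is then an immediate three-line chain of previously established results.
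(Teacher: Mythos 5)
Your strategy is exactly the paper's: bracket $a(X,D)$ between two smoothing Bessel potentials so that the middle factor is a zero-order operator, use Theorem~\ref{L2bound} for the $L^2$-boundedness of that middle factor, and use Theorem~\ref{sobolev} for the two outer Sobolev embeddings. The only cosmetic difference is that you pin the Sobolev exponents down at equality, $s_1=n(1/p-1/2)$ and $s_2=n(1/2-1/q)$, while the paper keeps slack by choosing $\theta_1,\theta_2$ with only the inequalities $-\theta_1\ge n(1/p-1/2)$, $-\theta_2\ge n(1/2-1/q)$; for this corollary the two are interchangeable. You also flag that the global claim needs some kernel bookkeeping to push Assumption~\ref{assu} through the two compositions with $\mathfrak B^{\pm s}$; the paper's proof is silent on this, so your caution there is a point in your favour.

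There is, however, a sign slip in the step ``the hypothesis \eqref{LPLQ1} becomes $s_1+s_2-\theta\le 0$.'' Taking the corollary literally ($a\in S^{-\theta}_{\rho,\delta}(\nabla)$ and $n(1/p-1/q)\le -\theta$), one gets $s_1+s_2\le -\theta$, i.e.\ $s_1+s_2+\theta\le 0$, whereas the order of $T=\mathfrak B^{s_2}\,a(X,D)\,\mathfrak B^{s_1}$ is $s_1+s_2-\theta$ (which you do compute correctly). Since \eqref{LPLQ1} forces $\theta\le 0$, the bound $s_1+s_2+\theta\le 0$ only yields $s_1+s_2-\theta\le -2\theta\ge 0$, so it does not give nonpositive order for $T$. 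The same sign mismatch is already present in the paper's proof: after choosing $\theta_1+\theta_2=\theta$ it asserts $\mathfrak Z=\mathfrak B^{-\theta_2}A\mathfrak B^{-\theta_1}\in\Psi^0$, but with $A$ of order $-\theta$ the order of $\mathfrak Z$ is $-\theta_1-\theta_2-\theta=-2\theta$, not zero. The intent (supported by Remark~\ref{rem2} and Figure~\ref{8}) is clearly that $a$ has order $\theta\le 0$; under that reading the order of your $T$ is $s_1+s_2+\theta\le 0$ and the chain $L^p\to L^2\to L^2\to L^q$ closes. So the idea is right and coincides with the paper's, but as written the order computation for $T$ and the rewriting of the hypothesis use opposite sign conventions, and the inference ``order of $T$ is nonpositive'' does not follow; you should make the convention explicit and keep it consistent across the two displayed facts.
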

\begin{proof}
   Let $\theta$ satisfy condition \eqref{LPLQ1}. Take $\theta_1, \theta_2\in\bbR$ such that $-\theta=-\theta_1-\theta_2$, $-\theta_1\geq n(\frac{1}{p}-\frac{1}{2})$ and $-\theta_2\geq n(\frac{1}{2}-\frac{1}{q})$. Thus, we can rewrite $A$ as follows 
    \[
    A = \mathfrak{B}^{\theta_2}\underbrace{\mathfrak{B}^{-\theta_2}A\mathfrak{B}^{-\theta_1}}_{\mathfrak{Z}}\mathfrak{B}^{\theta_1},  
    \]
    where by \eqref{besselEnSafarov} and Theorem \ref{producto}, $\mathfrak{Z}\in\Psi_{\rho, \delta}^{0}\left(\Omega^{\kappa}, \nabla\right)$. As a consequence of Theorems \ref{L2bound} and \ref{sobolev} we have that the previous operators are bounded on the following spaces 
    \[
    L^p(M)\xrightarrow{\mathfrak{B}^{\theta_1}} L^2(M)\xrightarrow{\mathfrak{Z}} L^2(M) \xrightarrow{\mathfrak{B}^{\theta_2}}L^q(M),
    \]
    so that $A$ is bounded from $L^p(M)$ to $L^q(M)$. 
\end{proof}
\begin{rem}\label{rem2}
    Notice that the values of $p$ and $q$ which one can have in this result are independent of $\rho$ and $\delta$. Also notice that we can not have $\theta$ to be strictly positive (the case of differential operators) because then it is impossible to satisfy \eqref{LPLQ1}, so this Corollary is useful for pseudo-differential operators of non-positive order. If $\theta=0$, then we only have a $L^2-L^2$ estimate. Moreover, if $-\theta\geq n$ then Condition \eqref{LPLQ1} is not affecting at all the values of $p$ and $q$. In the following Figure we illustrate how one can visualize Corollary \ref{corlplp}. 
    \begin{figure}[!ht]
\centering
\resizebox{0.4\textwidth}{!}{%
\begin{circuitikz}
\tikzstyle{every node}=[font=\small]
\draw [->, >=Stealth] (0,0) .. controls (2,0) and (4,0) .. (6.3,0);
\draw [->, >=Stealth] (0,0) .. controls (0,5) and (0,5) .. (0,6.3);
\node [font=\normalsize] at (-1,6) {$\frac{1}{q}$};
\node [font=\small] at (6,-0.5) {$\frac{1}{p}$};
\node [font=\normalsize] at (-1,5) {$1$};
\node [font=\small] at (5,-0.5) {$1$};
\node [font=\small] at (-1,2.5) {$\frac{1}{2}$};
\node [font=\small] at (-1,0.5) {$\frac{1}{10}$};
\node [font=\small] at (2.5,-0.5) {$\frac{1}{2}$};
\node [font=\small] at (4.5,-0.5) {$\frac{9}{10}$};
\node [font=\small, color=red]  at (1.9,-0.5) {$-\frac{\theta}{n}$};
\node [font=\small] at (-1,-0.5) {$0$};
\node [circle,fill,inner sep=1pt, color=red] at (2.5,2.5) {};

\draw [](5,2.5) to[short] (5,5);
\draw [color=red, dashed](2.5,0) to[short] (2.5,2.5);
\draw [color=red, dashed](5,0) to[short] (5,2.5);
\draw [color=red, dashed](2.5,2.5) to[short] (5,2.5);
\draw [color=red, dashed](2.5,0) to[short] (5,0);
\draw [](0,5) to[short] (5,5);
\draw[color=red] (2,0) to (5,3);
\fill[color={rgb,255:red,255; green,204; blue,204}]  (2.5,0.5) -- (2.5,2.5) -- (4.5,2.5) --  cycle;
[
\end{circuitikz}
}%
\label{8}
\caption{The coloured red area (triangle) represents the values of $p$ and $q$ one can insert in inequality \eqref{pq} when $0<-\theta<n$ and $1<p\leq2\leq q<\infty$.}
\end{figure}
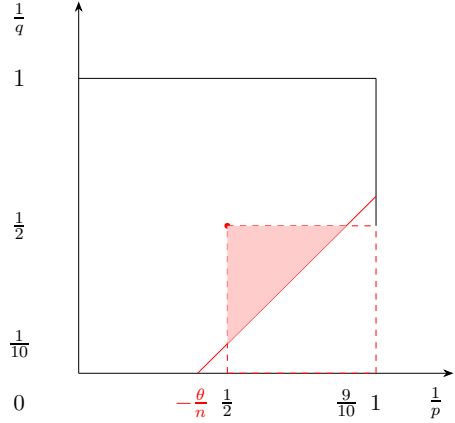
\end{rem}

\begin{cor}\label{corlplq2}
    Let at least one of the conditions (1)-(3) of Theorem \ref{lpFinal} be fulfilled. Let $\theta\in\bbR$ and $a\in S_{\rho, \delta}^{-\theta}(\nabla)$. Then $a(X, D)_{loc}$ is bounded from $L^p(M,\Omega^\kappa)$ to $L^q(M,\Omega^\kappa)$ if
    \begin{enumerate}
        \item $1<p\leq q \leq 2$ and 
        \begin{equation}
        \label{condLpLqUltima}
           n\left(\frac{1}{p}-\frac{1}{q} + (1-\rho)\left(\frac{1}{q}-\frac{1}{2}\right)\right)\leq -\theta;
        \end{equation}
        \item or if $2\leq p\leq q <\infty $ and 
        \begin{equation}
           n\left(\frac{1}{p}-\frac{1}{q} + (1-\rho)\left(\frac{1}{2}-\frac{1}{p}\right)\right)\leq -\theta. 
        \end{equation}
    \end{enumerate}
    Moreover, in both cases if $a\in S_{\rho, \delta}^{-\theta}(\nabla)_{glo}$ then $a(X,D)$ is globally bounded from $L^p(M,\Omega^\kappa)$ to $L^q(M,\Omega^\kappa)$ for the same range of values of $p$ and $q$. 
\end{cor}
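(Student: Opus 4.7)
The plan is to follow the template of Corollary \ref{corlplp}, now combining the Sobolev embedding via Bessel potentials (Theorem \ref{sobolev}) with the sharp $L^p$--$L^p$ bound of Theorem \ref{lpFinal} rather than only with the $L^2$--$L^2$ bound. I would first prove case (2) directly, and then obtain case (1) from case (2) by duality through Theorem \ref{adjunto}.

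For case (2), $2\le p\le q<\infty$, I would factor $A=\mathfrak{B}^{-s}\circ T$ with $s:=n\bigl(\tfrac{1}{p}-\tfrac{1}{q}\bigr)\ge 0$ and $T:=\mathfrak{B}^{s}A$. By \eqref{besselEnSafarov} and Theorem \ref{producto}, one has $T\in\Psi_{\rho,\delta}^{s-\theta}(\Omega^\kappa,\nabla)$. The assumed inequality together with this choice of $s$ places the order $s-\theta$ of $T$ and the exponent $p\ge 2$ precisely inside the admissible region of Theorem \ref{lpFinal}, so that $T:L^p_{\mathrm{comp}}(M,\Omega^\kappa)\to L^p_{\mathrm{loc}}(M,\Omega^\kappa)$ is locally bounded. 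Composing with $\mathfrak{B}^{-s}:L^p\to L^q$ from Theorem \ref{sobolev} yields the claimed bound for $A$. The arithmetic to check is that with this $s$ the hypothesis rearranges exactly to $|1/p-1/2|\le(\theta-s)/(n(1-\rho))$ for $p\ge 2$, with $\theta-s$ belonging to the admissible range $[0,n(1-\rho)/2)$ for Theorem \ref{lpFinal}.

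For case (1), $1<p\le q\le 2$, I would pass to the adjoint. By Theorem \ref{adjunto}, $A^*\in\Psi_{\rho,\delta}^{-\theta}(\Omega^{1-\kappa},\nabla)$ has the same order. The conjugate exponents satisfy $2\le q'\le p'<\infty$, and using $1/p+1/p'=1/q+1/q'=1$ the inequality assumed in case (1) rewrites verbatim as the inequality of case (2) for the pair $(q',p')$. Thus case (2) applied to $A^*$ yields $A^*:L^{q'}_{\mathrm{comp}}(M,\Omega^{1-\kappa})\to L^{p'}_{\mathrm{loc}}(M,\Omega^{1-\kappa})$ boundedly, and dualizing in the natural $L^p(\Omega^\kappa)$--$L^{p'}(\Omega^{1-\kappa})$ pairing gives the desired bound for $A$. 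The global boundedness under $a\in S^{-\theta}_{\rho,\delta}(\nabla)_{glo}$ then follows by splitting $a(X,D)=a(X,D)_{\mathrm{loc}}+a(X,D)_{\mathrm{glo}}$ and invoking Lemma \ref{lemGlob}, which controls the global part on every $L^r$, $1\le r\le\infty$, and combines with the local statement just obtained. The main technical step is the exponent bookkeeping in case (2); once that is carried out, case (1) and the global statement follow with essentially no further work.
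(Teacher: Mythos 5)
Your proposal is correct and follows essentially the same route as the paper: factor through a Bessel potential, apply Theorem \ref{sobolev} for the off-diagonal Sobolev step and Theorem \ref{lpFinal} for the remaining zero-order (or negative-order) piece, with the composition controlled by \eqref{besselEnSafarov} and Theorem \ref{producto}. The one genuine difference is organizational: the paper proves case (1) directly by writing $A=(A\mathfrak{B}^{-\theta_0})\mathfrak{B}^{\theta_0}$ with $\theta_0=-n(1/p-1/q)$, factoring through $L^q$ (so the $L^q$--$L^q$ bound from Theorem \ref{lpFinal} is used after the Sobolev step), and then asserts that case (2) is ``essentially the same.'' You instead prove case (2) directly by factoring through $L^p$ (Sobolev step last), and then recover case (1) by passing to the adjoint via Theorem \ref{adjunto} and the $L^p(\Omega^\kappa)$--$L^{p'}(\Omega^{1-\kappa})$ pairing. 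Since the two factorizations are exactly mirror images of each other and the two conditions in the corollary are interchanged under $(p,q)\mapsto(q',p')$, your duality step is a clean substitute for the paper's ``analogous'' remark; it buys a small amount of rigor at the cost of invoking one more tool (Theorem \ref{adjunto}). One bookkeeping caveat worth noting: both your arithmetic and the paper's implicitly treat the symbol order as $\theta$ rather than $-\theta$ (e.g.\ the paper asserts $A\mathfrak{B}^{-\theta_0}\in\Psi^{\theta-\theta_0}_{\rho,\delta}$ when the literal composition rule with $a\in S^{-\theta}_{\rho,\delta}$ gives order $-\theta-\theta_0$, and your claimed rearrangement $|1/p-1/2|\le(\theta-s)/(n(1-\rho))$ similarly matches the hypothesis only after flipping the sign of $\theta$). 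This is a sign-convention inconsistency already present in the corollary statement itself (cf.\ the same issue in Corollary \ref{corlplp}), not a gap introduced by your argument; once the convention is fixed so that $\theta\le 0$ corresponds to a genuinely smoothing operator, both proofs go through as written. Also, strictly speaking, when the reduced order falls below $-n(1-\rho)/2$ one should use the inclusion $S^{m_1}_{\rho,\delta}\subset S^{m_2}_{\rho,\delta}$ for $m_1\le m_2$ before invoking Theorem \ref{lpFinal}, a detail the paper also leaves implicit.
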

\begin{proof} The proofs of $(1)$ and $(2)$ are essentially the same, so we only do the one of $(1)$. Fix $\theta_0=-n(\frac{1}{p}-\frac{1}{q})$, so that $\mathfrak{B}^{\theta_0}$ is bounded from $L^p(M)$ to $L^q(M)$ by Theorem \ref{sobolev}. We rewrite $A$ as follows
    \[
    A=(A\mathfrak{B}^{-\theta_0})\mathfrak{B}^{\theta_0},
    \]
    where by \eqref{besselEnSafarov} and Theorem \ref{producto}, $A\mathfrak{B}^{-\theta_0}\in \Psi_{\rho, \delta}^{\theta-\theta_0}\left(\Omega^{\kappa}, \nabla\right)$. From condition \eqref{condLpLqUltima} and definition of $\theta_0$ we get that 
    \[
    (1-\rho)\left(\frac{1}{q}-\frac{1}{2}\right)\leq -(\theta-\theta_0). 
    \]
    Hence, by Theorem \ref{lpFinal}, $A\mathfrak{B}^{-\theta_0}$ is bounded from $L^q(M)$ to $L^q(M)$ and the result follows taking into account Remark \ref{NormaBessel}.  
\end{proof}
\begin{rem}\label{rem3}
    Unlike in Corollary \ref{corlplp}, in Corollary \ref{corlplq2} (1) and (2) we do have an important dependence on $\rho$. We are going to comment in detail the case $(1)$, but for case (2) the analysis is analogous. Again, the possibility of taking $\theta$ positive is discarded because then it will be impossible to fulfill Condition \eqref{condLpLqUltima}. Let us rewrite Condition \eqref{condLpLqUltima} as follows: 
    \begin{equation}\label{linea}
        \frac{1}{\rho p}+\frac{1}{\rho}\left(\frac{\theta}{n}-\frac{1-\rho}{2}\right)\leq \frac{1}{q}, 
    \end{equation}
    where we recognize that this is a line with slope $\frac{1}{\rho}$ and cutting the x-axis at $\frac{1}{\rho}\left(\frac{1-\rho}{2}-\frac{\theta}{n}\right)$. 
    If $\theta=0$, we have two cases: for $\rho=1$ the values of $p$ and $q$ are exactly the line $1<p=q\leq 2$, for $0<\rho<1$ we only have the $L^2-L^2$ estimate. If $-\theta\geq n$, then this condition does not affect at all the triangle $\frac{1}{2}\leq\frac{1}{q}\leq\frac{1}{p}<1$. If $0<-\theta<n$, then regarding $\rho$ as variable one can think as follows: the parameterized family of lines \eqref{linea} is rotating the line with $\rho=1$
    \[
    \frac{1}{ p}+ \frac{\theta}{n}\leq \frac{1}{q},
    \]
    into the vertical line ($\rho=0$)
    \[
    \frac{1}{p}\leq -\frac{\theta}{n}+\frac{1}{2},
    \]
    while fixing the point $(-\frac{\theta}{n}+\frac{1}{2}, \frac{1}{2})$. The latter observation implies that in this case, the condition is only relevant when $0<-\frac{\theta}{n}<\frac{1}{2}$. Hence, the enclosed area between the family of lines \eqref{linea} and the triangle $\frac{1}{2}\leq\frac{1}{q}\leq\frac{1}{p}<1$ is changing as we change $\rho$, the bigger the value of $\rho$ the bigger the enclosed area. This phenomena is shown in the following Figure:
    \begin{figure}[!ht]
        \begin{tabular}{cc}
        \resizebox{0.39\textwidth}{!}{%
\begin{circuitikz}
\tikzstyle{every node}=[font=\small]
\draw [->, >=Stealth] (0,0) .. controls (2,0) and (4,0) .. (6.3,0);
\draw [->, >=Stealth] (0,0) .. controls (0,5) and (0,5) .. (0,6.3);
\node [font=\normalsize] at (-1,6) {$\frac{1}{q}$};
\node [font=\small] at (6,-0.5) {$\frac{1}{p}$};
\node [font=\normalsize] at (-1,5) {$1$};
\node [font=\small] at (5,-0.5) {$1$};
\node [font=\small] at (-1,2.5) {$\frac{1}{2}$};
\node [font=\small] at (2.5,-0.5) {$\frac{1}{2}$};
\node [font=\small] at (4,-0.5) {$-\frac{\theta}{n}+\frac{1}{2}$};

\node [font=\small] at (-1,-0.5) {$0$};
\node [circle,fill,inner sep=1pt, color=black] at (2.5,2.5) {};


\draw [color=black, dashed](5,2.5) to[short] (5,5);
\draw [color=black, dashed](2.5,2.5) to[short] (5,2.5);
\draw [](0,5) to[short] (5,5);
\draw [](5,0) to[short] (5,2.5);
\draw [](4,2.5) to[short] (5,3.5);
\draw [color=black](2.5,2.5) to[short] (5,5);
\draw [](4,-0.1) to[short] (4,0.1);
\fill[color={rgb,255:red,192; green,192; blue,192}]  (2.5,2.5) -- (5,5) -- (5,3.5) -- (4,2.5) -- cycle;
[
\end{circuitikz}
} & \resizebox{0.39\textwidth}{!}{%
\begin{circuitikz}
\tikzstyle{every node}=[font=\small]
\draw [->, >=Stealth] (0,0) .. controls (2,0) and (4,0) .. (6.3,0);
\draw [->, >=Stealth] (0,0) .. controls (0,5) and (0,5) .. (0,6.3);
\node [font=\normalsize] at (-1,6) {$\frac{1}{q}$};
\node [font=\small] at (6,-0.5) {$\frac{1}{p}$};
\node [font=\normalsize] at (-1,5) {$1$};
\node [font=\small] at (5,-0.5) {$1$};
\node [font=\small] at (-1,2.5) {$\frac{1}{2}$};
\node [font=\small] at (2.5,-0.5) {$\frac{1}{2}$};
\node [font=\small] at (-1,-0.5) {$0$};
\node [circle,fill,inner sep=1pt, color=red] at (2.5,2.5) {};
\node [font=\small] at (4,-0.5) {$-\frac{\theta}{n}+\frac{1}{2}$};

\draw [color=red, dashed](5,2.5) to[short] (5,5);
\draw [color=red, dashed](2.5,2.5) to[short] (5,2.5);
\draw [](0,5) to[short] (5,5);
\draw [](5,0) to[short] (5,2.5);
\draw [color=red](4,2.5) to[short] (5,4.5);
\draw [color=red](2.5,2.5) to[short] (5,5);
\draw [](4,-0.1) to[short] (4,0.1);
\fill[color={rgb,255:red,255; green,204; blue,204}]  (2.5,2.5) -- (5,5) -- (5,4.5) -- (4,2.5) --cycle;
[
\end{circuitikz}}
        \end{tabular}
    \end{figure}
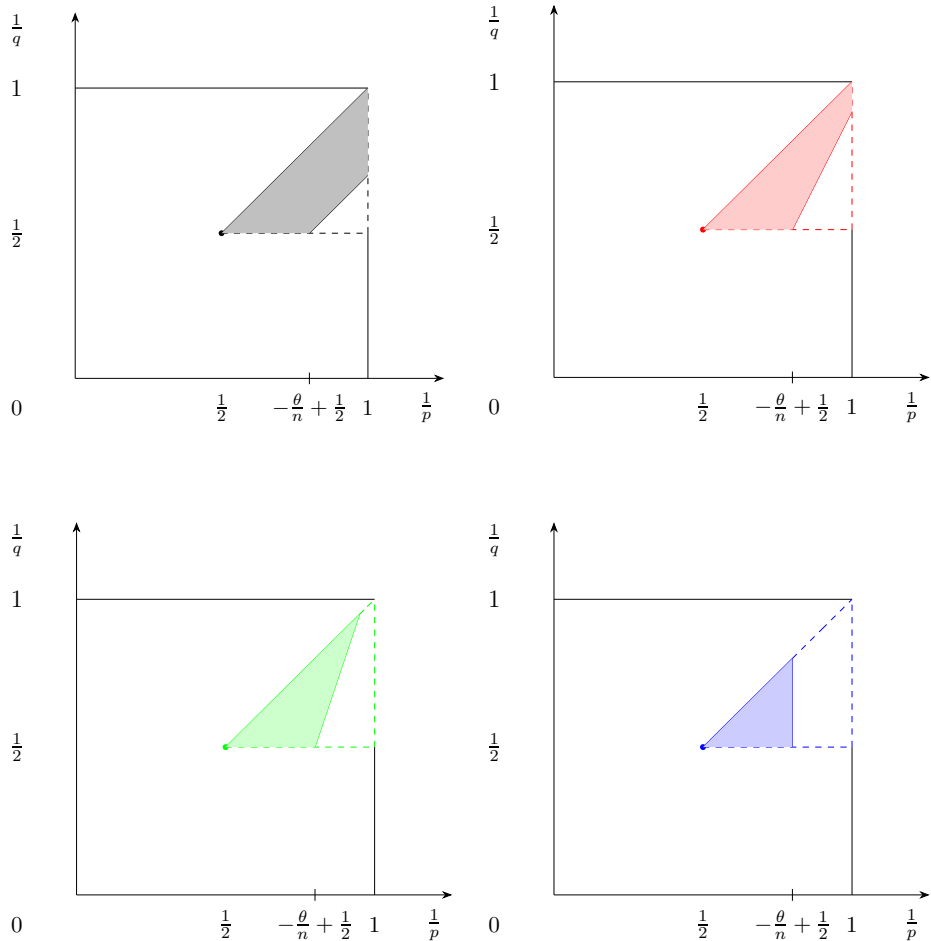
\begin{figure}[!ht]
\begin{tabular}{cc}
\resizebox{0.39\textwidth}{!}{%
\begin{circuitikz}
\tikzstyle{every node}=[font=\small]
\draw [->, >=Stealth] (0,0) .. controls (2,0) and (4,0) .. (6.3,0);
\draw [->, >=Stealth] (0,0) .. controls (0,5) and (0,5) .. (0,6.3);
\node [font=\normalsize] at (-1,6) {$\frac{1}{q}$};
\node [font=\small] at (6,-0.5) {$\frac{1}{p}$};
\node [font=\normalsize] at (-1,5) {$1$};
\node [font=\small] at (5,-0.5) {$1$};
\node [font=\small] at (-1,2.5) {$\frac{1}{2}$};
\node [font=\small] at (2.5,-0.5) {$\frac{1}{2}$};
\node [font=\small] at (-1,-0.5) {$0$};
\node [circle,fill,inner sep=1pt, color=green] at (2.5,2.5) {};
\node [font=\small] at (4,-0.5) {$-\frac{\theta}{n}+\frac{1}{2}$};

\draw [color=green, dashed](5,2.5) to[short] (5,5);
\draw [color=green, dashed](2.5,2.5) to[short] (5,2.5);
\draw [](4,-0.1) to[short] (4,0.1);
\draw [](0,5) to[short] (5,5);
\draw [](5,0) to[short] (5,2.5);
\draw [color=green](4,2.5) to[short] (4.75,4.75);
\draw [color=green](2.5,2.5) to[short] (4.75,4.75);
\draw [color=green, dashed](4.75,4.75) to[short] (5,5);
\fill[color={rgb,255:red,204; green,255; blue,204}]  (2.5,2.5) -- (4.75,4.75) -- (4,2.5) -- cycle;
[
\end{circuitikz}} &
\resizebox{0.39\textwidth}{!}{%
\begin{circuitikz}
\tikzstyle{every node}=[font=\small]
\draw [->, >=Stealth] (0,0) .. controls (2,0) and (4,0) .. (6.3,0);
\draw [->, >=Stealth] (0,0) .. controls (0,5) and (0,5) .. (0,6.3);
\node [font=\normalsize] at (-1,6) {$\frac{1}{q}$};
\node [font=\small] at (6,-0.5) {$\frac{1}{p}$};
\node [font=\normalsize] at (-1,5) {$1$};
\node [font=\small] at (5,-0.5) {$1$};
\node [font=\small] at (-1,2.5) {$\frac{1}{2}$};
\node [font=\small] at (2.5,-0.5) {$\frac{1}{2}$};
\node [font=\small] at (-1,-0.5) {$0$};
\node [circle,fill,inner sep=1pt, color=blue] at (2.5,2.5) {};
\node [font=\small] at (4,-0.5) {$-\frac{\theta}{n}+\frac{1}{2}$};

\draw [color=blue, dashed](5,2.5) to[short] (5,5);
\draw [color=blue, dashed](2.5,2.5) to[short] (5,2.5);
\draw [](4,-0.1) to[short] (4,0.1);
\draw [](0,5) to[short] (5,5);
\draw [](5,0) to[short] (5,2.5);
\draw [color=blue](4,2.5) to[short] (4,4);
\draw [color=blue](2.5,2.5) to[short] (4,4);
\draw [color=blue, dashed](4,4) to[short] (5,5);
\fill[color={rgb,255:red,204; green,204; blue,255}]  (2.5,2.5) -- (4,4) -- (4,2.5) -- cycle;
[
\end{circuitikz}}
\end{tabular}    

\caption{\label{grafica3}The values of $p$ and $q$ one can insert in inequality \eqref{pq} when $0<-\theta<\frac{n}{2}$, $1<p\leq q \leq 2$ and, $\rho=1$ in black, $\rho=\frac{1}{2}$ in red, $\rho=\frac{1}{3}$ in green, $\rho=0$ in blue.}
\end{figure}

\end{rem}
\section{Acknowledgements}
The authors were supported by the FWO Odysseus 1 grant G.0H94.18N: Analysis and Partial Differential Equations, the Methusalem programme of the Ghent University Special Research Fund (BOF) (Grant number 01M01021). MR is also supported by EPSRC grant EP/V005529/1, and FWO Senior Research Grant G011522N.

\section{Conflicts of interest}
On behalf of all authors, the corresponding author states that there is no conflict of interest.

\end{document}